\documentclass[twoside]{aiml22}

\usepackage{aiml22macro}

\usepackage{graphicx}
\usepackage{amsmath}
\usepackage{amssymb}

\usepackage{tikz}
\usepackage{bussproofs}
\usepackage{amssymb}
\usepackage{multirow}
\usepackage{stmaryrd}
\usepackage{mathrsfs}
\usepackage{lipsum} 
\usepackage{enumitem}
\usepackage{scrextend}







\newcommand{\mf}{\mathsf}

\newcommand{\ufor}{\Vd^{\forall}}
\newcommand{\efor}{\Vd^{\exists}}

\newcommand{\leftrule}[1]{$\mathsf{#1_{L}}$}
\newcommand{\rightrule}[1]{$\mathsf{#1_{R}}$}

\newcommand{\init}{$\mf{init}$}
\newcommand{\lbot}{\leftrule{\bot}}

\newcommand{\lland}{\leftrule{\land}}
\newcommand{\rland}{\rightrule{\land}}
\newcommand{\llor}{\leftrule{\lor}}
\newcommand{\rlor}{\rightrule{\lor}}

\newcommand{\limp}{\leftrule{\imp}}
\newcommand{\rimp}{\rightrule{\imp}}





\newcommand{\diam}{\Diamond}
\newcommand{\boxm}[1]{\Box^-#1}

\newcommand{\G}{\Gamma}
\newcommand{\D}{\Delta}

\newcommand{\N}{\mathcal N}

\newcommand{\W}{\mathcal W}

\newcommand{\V}{\mathcal V}

\newcommand{\R}{\mathcal R}

\newcommand{\CNM}{CNM}

\newcommand{\M}{\mathcal M}

\newcommand{\lan}{\mathcal L}

\newcommand{\atm}{Atm}

\newcommand{\CC}{\mathscr{C}}
\newcommand{\U}{\mathscr{U}}
\newcommand{\aU}{\alpha_{\mathscr{U}}}
\newcommand{\aUp}{\alpha_{\mathscr{U'}}}
\newcommand{\canonic}[1]{#1}
\newcommand{\Mc}{\canonic{\M}}
\newcommand{\Vc}{\canonic{\V}}
\newcommand{\Nc}{\canonic{\N}}
\newcommand{\Wc}{\canonic{\W}}
\newcommand{\lessc}{\canonic{\less}}
\newcommand{\morec}{\canonic{\more}}

\newcommand{\Sc}{\Sigma'_{C}}
\newcommand{\Sd}{\Sigma'_{D}}
\newcommand{\Scd}{\Sigma'_{CD}}
\newcommand{\Stopd}{\Sigma'_{\top D}}

\newcommand{\Sb}{\Sigma'_{B}}
\newcommand{\Sab}{\Sigma'_{AB}}
\newcommand{\Ud}{\U'_{D}}
\newcommand{\Ue}{\U'_{E}}
\newcommand{\Ua}{\U_{A}}
\newcommand{\Ub}{\U_{B}}

\newcommand{\pow}{\mathcal P}


\newcommand{\E}{\logicnamestyle{E}}

\newcommand{\EM}{\logicnamestyle{M}}

\newcommand{\EMN}{\logicnamestyle{MN}}
\newcommand{\EMC}{\logicnamestyle{MC}}

\newcommand{\EMP}{\logicnamestyle{MP}}
\newcommand{\EMNP}{\logicnamestyle{MNP}}
\newcommand{\EMCP}{\logicnamestyle{MCP}}

\newcommand{\EMD}{\logicnamestyle{MD}}
\newcommand{\EMND}{\logicnamestyle{MND}}
\newcommand{\EMCD}{\logicnamestyle{MCD}}

\newcommand{\EMT}{\logicnamestyle{MT}}
\newcommand{\EMNT}{\logicnamestyle{MNT}}
\newcommand{\EMCT}{\logicnamestyle{MCT}}

\newcommand{\K}{\logicnamestyle{K}}
\newcommand{\KT}{\logicnamestyle{KT}}
\newcommand{\KD}{\logicnamestyle{KD}}
\newcommand{\KP}{\logicnamestyle{KP}}

\newcommand{\IK}{\logicnamestyle{IK}}

\newcommand{\WK}{\logicnamestyle{WK}}
\newcommand{\WKD}{\logicnamestyle{WKD}}
\newcommand{\WKT}{\logicnamestyle{WKT}}
\newcommand{\WM}{\logicnamestyle{WM}}
\newcommand{\WMN}{\logicnamestyle{WMN}}
\newcommand{\WMC}{\logicnamestyle{WMC}}
\newcommand{\WMD}{\logicnamestyle{WMD}}
\newcommand{\WMND}{\logicnamestyle{WMND}}
\newcommand{\WMCD}{\logicnamestyle{WMCD}}
\newcommand{\WMT}{\logicnamestyle{WMT}}
\newcommand{\WMNT}{\logicnamestyle{WMNT}}
\newcommand{\WMCT}{\logicnamestyle{WMCT}}
\newcommand{\WMP}{\logicnamestyle{WMP}}
\newcommand{\WMNP}{\logicnamestyle{WMNP}}

\newcommand{\seqWK}{\Gone{WK}}
\newcommand{\seqWKD}{\Gone{WKD}}
\newcommand{\seqWKT}{\Gone{WKT}}
\newcommand{\seqWM}{\Gone{WM}}
\newcommand{\seqWMN}{\Gone{WMN}}
\newcommand{\seqWMC}{\Gone{WMC}}
\newcommand{\seqWMD}{\Gone{WMD}}
\newcommand{\seqWMND}{\Gone{WMND}}
\newcommand{\seqWMCD}{\Gone{WMCD}}
\newcommand{\seqWMT}{\Gone{WMT}}
\newcommand{\seqWMNT}{\Gone{WMNT}}
\newcommand{\seqWMCT}{\Gone{WMCT}}
\newcommand{\seqWMP}{\Gone{WMP}}
\newcommand{\seqWMNP}{\Gone{WMNP}}
\newcommand{\seqWL}{\Gone{WL}}
\newcommand{\seqWstar}{\Gone{W^*}}

\newcommand{\seqK}{\Gone{K}}
\newcommand{\seqKD}{\Gone{KD}}
\newcommand{\seqKT}{\Gone{KT}}
\newcommand{\seqEM}{\Gone{M}}
\newcommand{\seqEMN}{\Gone{MN}}
\newcommand{\seqEMC}{\Gone{MC}}
\newcommand{\seqEMD}{\Gone{MD}}
\newcommand{\seqEMND}{\Gone{MND}}
\newcommand{\seqEMCD}{\Gone{MCD}}
\newcommand{\seqEMT}{\Gone{MT}}
\newcommand{\seqEMNT}{\Gone{MNT}}
\newcommand{\seqEMCT}{\Gone{MCT}}
\newcommand{\seqEMP}{\Gone{MP}}
\newcommand{\seqEMNP}{\Gone{MNP}}

\newcommand{\Sfive}{\logicnamestyle{S5}}

\newcommand{\CPL}{\logicnamestyle{CPL}}
\newcommand{\IPL}{\logicnamestyle{IPL}}

\newcommand{\hilbertaxiomstyle}[1]{${#1}$}

\newcommand{\axT}{\hilbertaxiomstyle{T}}
\newcommand{\axD}{\hilbertaxiomstyle{D}}

\newcommand{\axfour}{\hilbertaxiomstyle{4}}
\newcommand{\axfive}{\hilbertaxiomstyle{5}}
\newcommand{\axB}{\hilbertaxiomstyle{B}}

\newcommand{\ax}{\AxiomC}
\newcommand{\uinf}{\UnaryInfC}
\newcommand{\binf}{\BinaryInfC}

\newcommand{\llab}{\LeftLabel}
\newcommand{\rlab}{\RightLabel}
\newcommand{\disp}{\DisplayProof}





\newcommand{\semcond}[1]{{#1}}

\newcommand{\cN}{\semcond N}

\newcommand{\cC}{\semcond C}
\newcommand{\cT}{\semcond T}
\newcommand{\cP}{\semcond P}
\newcommand{\cD}{\semcond D}

\newcommand{\cX}{\semcond X}

\newcommand{\seq}{\Rightarrow}

\newcommand{\tto}{\imp\coimp}
\newcommand{\AND}{\bigwedge}
\newcommand{\OR}{\bigvee}
\newcommand{\vd}{\vdash}
\newcommand{\Vd}{\Vdash}
\newcommand{\eg}{e.g.}
\newcommand{\ie}{i.e.}
\newcommand{\ih}{i.h.}


\newcommand{\Gone}[1]{\mathsf{S.{#1}}}






%

%



\newcommand{\wij}{Wijesekera}

\newcommand{\intu}{intuitionistic}
\newcommand{\const}{constructive}
\newcommand{\neigh}{neighbourhood}
\newcommand{\seg}{segment}





%

%





\newenvironment{proof-sketch}{\noindent{\em Sketch of Proof.}\hspace*{0.5em}}{\qed\medskip}


%




\newcommand{\cut}{$\mf{cut}$}








%


%


%


%




\newcommand{\axCdiam}{\hilbertaxiomstyle{C_\Diamond}}
\newcommand{\axNdiam}{\hilbertaxiomstyle{N_\Diamond}}

\newcommand{\axCbox}{\hilbertaxiomstyle{C_\Box}}
\newcommand{\axNbox}{\hilbertaxiomstyle{N_\Box}}

\newcommand{\axKdiam}{\hilbertaxiomstyle{K_\Diamond}}
\newcommand{\axKbox}{\hilbertaxiomstyle{K_\Box}}
\newcommand{\nec}{\hilbertaxiomstyle{nec}}
\newcommand{\monbox}{\hilbertaxiomstyle{mon_\Box}}
\newcommand{\mondiam}{\hilbertaxiomstyle{mon_\diam}}

\newcommand{\axTbox}{\hilbertaxiomstyle{T_\Box}}
\newcommand{\axTdiam}{\hilbertaxiomstyle{T_\diam}}
\newcommand{\axPbox}{\hilbertaxiomstyle{P_\Box}}
\newcommand{\axPdiam}{\hilbertaxiomstyle{P_\diam}}

\newcommand{\axdual}{\hilbertaxiomstyle{dual}}
\newcommand{\axdualand}{\hilbertaxiomstyle{dual_\land}}
\newcommand{\Rdualand}{\hilbertaxiomstyle{Rdual_\land}}
\newcommand{\axdualor}{\hilbertaxiomstyle{dual_\lor}}
\newcommand{\Rdualor}{\hilbertaxiomstyle{Rdual_\lor}}

\newcommand{\rulestyle}[1]{$\mathsf{#1}$}

\newcommand{\ruledualandaux}[1]{\rulestyle{\land\textup{-}dual_{#1}}}
\newcommand{\ruledualoraux}[1]{\rulestyle{\lor\textup{-}dual_{#1}}}
\newcommand{\ruledualandauxtwo}[2]{\rulestyle{\land\textup{-}dual_{#1}^{#2}}}
\newcommand{\ruledualandM}{\ruledualandaux{M}}
\newcommand{\ruledualorM}{\ruledualoraux{M}}
\newcommand{\ruledualandC}{\ruledualandaux{C}}
\newcommand{\ruledualorC}{\ruledualoraux{C}}

\newcommand{\ruledualorR}{\ruledualoraux{R}}
\newcommand{\ruleKbox}{\rulestyle{K_\Box}}
\newcommand{\ruleKdiam}{\rulestyle{K_\diam}}
\newcommand{\ruleNbox}{\rulestyle{N_\Box}}
\newcommand{\ruleNdiam}{\rulestyle{N_\diam}}
\newcommand{\ruleMbox}{\rulestyle{M_\Box}}
\newcommand{\ruleMdiam}{\rulestyle{M_\diam}}
\newcommand{\ruleCbox}{\rulestyle{C_\Box}}
\newcommand{\ruleCdiam}{\rulestyle{C_\diam}}
\newcommand{\ruleTbox}{\rulestyle{T_\Box}}
\newcommand{\ruleTdiam}{\rulestyle{T_\diam}}

\newcommand{\ruleD}{\rulestyle{D}}
\newcommand{\rulePbox}{\rulestyle{P_\Box}}
\newcommand{\rulePdiam}{\rulestyle{P_\diam}}
\newcommand{\ruleCD}{\rulestyle{CD}}

\newcommand{\ruleDbox}{\rulestyle{D_\Box}}
\newcommand{\ruleDdiam}{\rulestyle{D_\diam}}

\newcommand{\ruleiKbox}{\rulestyle{K_\Box^i}}
\newcommand{\ruleiKdiam}{\rulestyle{K_\diam^i}}
\newcommand{\ruleiNbox}{\rulestyle{N_\Box^i}}
\newcommand{\ruleiNdiam}{\rulestyle{N_\diam^i}}
\newcommand{\ruleiMbox}{\rulestyle{M_\Box^i}}
\newcommand{\ruleiMdiam}{\rulestyle{M_\diam^i}}
\newcommand{\ruleiCbox}{\rulestyle{C_\Box^i}}
\newcommand{\ruleiCdiam}{\rulestyle{C_\diam^i}}
\newcommand{\ruleiTbox}{\rulestyle{T_\Box^i}}
\newcommand{\ruleiTdiam}{\rulestyle{T_\diam^i}}

\newcommand{\ruleiD}{\rulestyle{D^i}}

\newcommand{\ruleiDbox}{\rulestyle{D_\Box^i}}

\newcommand{\ruleiPbox}{\rulestyle{P_\Box^i}}
\newcommand{\ruleiPdiam}{\rulestyle{P_\diam^i}}
\newcommand{\ruleiCD}{\rulestyle{CD^i}}
\newcommand{\ruleiCDbox}{\rulestyle{CD_\Box^i}}






\newcommand{\less}{\leq}
\newcommand{\more}{\geq}


\newcommand{\imp}{\supset}
\newcommand{\coimp}{\subset}




\newcommand{\var}{var}




\newcommand{\vleadsto}{\mathbin{\rotatebox[origin=c]{270}{$\leadsto$}}}



\newcommand{\logicnamestyle}[1]{\mathsf{#1}}
\newcommand{\CK}{\logicnamestyle{CK}}

\newcommand{\logic}{\logicnamestyle{L}}
\newcommand{\seqlogic}{\logicnamestyle{S.L}}
\newcommand{\WL}{\Wlogic}
\newcommand{\WLL}{\logicnamestyle{WL}}

\newcommand{\WPstar}{\logicnamestyle{WP^*}}
\newcommand{\WNstar}{\logicnamestyle{WN^*}}
\newcommand{\WCstar}{\logicnamestyle{WC^*}}
\newcommand{\WDstar}{\logicnamestyle{WD^*}}
\newcommand{\WTstar}{\logicnamestyle{WT^*}}
\newcommand{\Wlogic}{\logicnamestyle{W^*}}
\newcommand{\seqWlogic}{\logicnamestyle{S.WL}}
\newcommand{\varax}{\hilbertaxiomstyle{X}}
\newcommand{\varaxbox}{\hilbertaxiomstyle{X_\Box}}

\newcommand{\ruleidualandM}{\ruledualandauxtwo{M}{i}}
\newcommand{\ruleidualandC}{\ruledualandauxtwo{C}{i}}
\newcommand{\ruleidualandK}{\ruledualandauxtwo{K}{i}}

\newcommand{\iinit}{$\mf{init^i}$}
\newcommand{\ilbot}{\leftrule{\bot^i}}
\newcommand{\illand}{\leftrule{\land^i}}
\newcommand{\irland}{\rightrule{\land^i}}
\newcommand{\illor}{\leftrule{\lor^i}}
\newcommand{\irlor}{\rightrule{\lor^i}}
\newcommand{\ilimp}{\leftrule{\imp^i}}
\newcommand{\irimp}{\rightrule{\imp^i}}
\newcommand{\ilwk}{$\mathsf{Lwk}$}
\newcommand{\irwk}{$\mathsf{Rwk}$}
\newcommand{\ilctr}{$\mathsf{ctr}$}



\newcommand\blfootnote[1]{%
  \begingroup
  \renewcommand\thefootnote{}\footnote{#1}%
  \addtocounter{footnote}{-1}%
  \endgroup
}



\begin{document}

\begin{frontmatter}
\title{Wijesekera-style constructive modal logics}
  \author{Tiziano Dalmonte} 
\address{Free University of Bozen-Bolzano, Bolzano, Italy}

  \begin{abstract}
We define a family of propositional constructive modal logics
corresponding each to a different classical modal system.
The logics are 
defined 
in the style of Wijesekera's constructive modal logic~\cite{wij}, and are
both proof-theoretically and semantically motivated.
On the one hand, 
they correspond to the single-succedent restriction 
of standard sequent calculi for classical modal logics.
On the other hand, they are obtained 
by incorporating the hereditariness of intuitionistic Kripke models into the classical satisfaction clauses for modal formulas.
We show that, for the considered classical logics,
the proof-theoretical and the semantical approach 
return
the same constructive systems.
  \end{abstract}

  \begin{keyword}
Constructive modal logic, intuitionistic modal logic, sequent calculus, neighbourhood semantics.
  \end{keyword}
 \end{frontmatter}



\section{Introduction}

Constructive%
\blfootnote{Preprint accepted for publication at AiML 2022. This version corrects typos in Defs.~\ref{def:neigh model} and \ref{def:semantics}
and on p.~\pageref{correction1}.}
 or intuitionistic modal logics are extensions of \intu\ logic with modalities $\Box$ and $\diam$.
The  motivations for the study of modalities with an \intu\ basis are manifold,
but they can be schematically classified into two kinds. 
On the one hand, from a theoretical perspective,
it comes natural to combine intuitionistic and modal logic \cite{Simpson:1994},
considering in particular that both of them can be semantically arranged in terms of 
possible world models.
In addition, the rejection of classical equivalences can allow for a finer analysis of the modalities.
On the other hand, intuitionistic or constructive modal logics can be motivated by specific applications in computer science,
such as type-theoretic interpretations, 
verification, 
and knowledge representation. 




A 
peculiar feature
of intuitionistic modal logics is that, 
similarly to the 
\intu\  connectives, $\Box$ and $\diam$ are not interdefinable.
This allows for the definition of 
systems 
in which $\Box$ and $\diam$ satisfy 
distinct principles.
At the same time, it makes possible to define different \intu\ or \const\ counterparts of the same classical logic,
as it is testified 
 by the several \intu\ versions of classical $\K$ which have been proposed in the literature 
(see \cite{Simpson:1994} for a survey).

Intuitionistic modal logics have been formulated as 
monomodal 
(with only $\Box$ or only $\diam$)
or bimodal (with both $\Box$ and $\diam$)
systems.
Considering logics including both modalities, 
two \intu\ versions of $\K$ have been mostly considered:
so-called Intuitionistic K ($\IK$) and Constructive K ($\CK$).
The first system was introduced by Fischer Servi~\cite{FischerServi:1980}, Ploktin and Stirling~\cite{Plotkin}, and Ewald~\cite{Ewald},
and can be 
defined 
as the set of formulas 
whose standard translation is derivable in first-order \intu\ logic~\cite{Simpson:1994,Wolter:1999}.
The second system,
which is weaker that $\IK$, was introduced by Bellin, de Paiva and Ritter~\cite{Bellin},
and was motivated by type-theoretic interpretations of the modalities and categorical semantics,
but also by contextual reasoning \cite{Mendler1,Mendler:2014}.  
Both the
semantics~\cite{FischerServi:1980,Plotkin,Ewald,Simpson:1994,Balbiani,Mendler1,Alechina,dal:JPL,Mendler:tableaux,Acclavio}
and the proof theory~\cite{Amati,Strassburger:2013,Galmiche:2018,Kuznets,Marin:2017,Marin:2021,Lyon,Bellin,Arisaka,Mendler2,dal:tableaux}
of 
$\IK$ and $\CK$ have been extensively investigated,
in particular significant 
consideration
has been devoted to their extensions 
with standard modal axioms \axD, \axT, \axB, \axfour, and \axfive,
so that entire
families of \intu\ and \const\ modal logics are now available in the literature.


An additional bimodal constructive version of $\K$ was proposed by \wij~\cite{wij}.
\wij's logic aimed at representing reasoning with partial information about the states of 
concurrent transition systems, and was 
introduced as a modal extension of first-order intuitionistic logic.
If we restrict our attention to its propositional fragment,
\wij's logic 
is 
intermediate between $\CK$ and $\IK$.
In particular, 
\wij's logic
(we call it $\WK$)
can be defined by
extending (any axiomatisation of) \intu\ propositional logic  ($\IPL$)
with the following modal axioms and rules:%
\footnote{\wij~\cite{wij} includes also
 the axiom $\Box A \land \diam(A \imp B) \imp \diam B$ which is derivable from the others.}
\begin{center}
\begin{tabular}{llllllll}
\multirow{2}{*}{\ax{$A$}
\llab{\nec}
\uinf{$\Box A$}
\disp} &&
\axKbox &  $\Box(A \imp B) \imp (\Box A \imp \Box B)$ && \axNdiam & $\neg\diam\bot$ \\
 &&
\axKdiam &  $\Box(A \imp B) \imp (\diam A \imp \diam B)$ \\
\end{tabular}
\end{center}
%
Then $\CK$ can be obtained by dropping $\neg\diam\bot$, 
whereas $\IK$ is obtained by extending $\WK$ with the axioms
$\diam(A \lor B) \imp \diam A \lor \diam B$ and
$(\diam A \imp \Box B) \imp \Box(A \imp B)$.
The interest of $\WK$ is not limited to its intended interpretation:
this logic also 
exhibits
an elegant relation with classical $\K$, 
both from a semantical and from a proof-theoretical perspective. 
We now illustrate this relation.

Semantics for \intu\ modal logics are typically defined 
by combining \intu\ Kripke models and
possible-world models for modal logics.
A crucial requirement is that the resulting models must 
preserve the hereditary property of \intu\ 
models,
meaning that 
if a formula is true in a world $w$, then it is true also in all worlds reachable from $w$ 
through the \intu\ order $\less$.
Such a requirement can be fulfilled essentially in two ways.
First, 
one can establish 
suitable combinations between 
$\less$ and the modal relation $\R$,
as it is done for instance in the semantics for $\IK$.
Alternatively, one can build the hereditariness into the satisfaction clauses for modal formulas
by requiring that the standard clauses hold for all $\less$-successors.
This is the strategy adopted by \wij~\cite{wij} who presents models with two relations $\less$ and $\R$ 
without any specific combination between 
them,
where the modalities are interpreted 
in the following way:%
\footnote{The semantics of $\CK$ is similar but it also requires 
`fallible' 
worlds
satisfying $\bot$ (cf.~\cite{Mendler1}).}
\begin{center}
\begin{tabular}{lllll}
$\M, w\Vd\Box A$ & iff &  for all $v\more w$, for all $u$, if $v\R u$, then $\M, u\Vd A$. \\
$\M, w\Vd\diam A$ & iff & for all $v\more w$, there is $u$ such that $v\R u$ and $\M, u\Vd A$. \\
\end{tabular}
\end{center}

\wij~\cite{wij}
also provides 
a sequent calculus for $\WK$,
which is defined
by extending a suitable 
calculus for $\IPL$ with the following modal rules
(where $|\G|\geq0$ and $0\leq|\D|\leq1$):
\begin{center}
\ax{$\G \seq A$}
\llab{\ruleiKbox}
\uinf{$\Box\G \seq \Box A$}
\disp
\qquad
\ax{$\G, A \seq \D$}
\llab{\ruleiKdiam}
\uinf{$\Box\G, \diam A \seq \diam\D$}
\disp
\end{center}

Gentzen \cite{Gentzen} showed that, given a suitable sequent calculus for classical logic, 
its restriction to 
single-succedent sequents (i.e., sequents with at most one formula in the consequent)
provides a sequent calculus for \intu\ logic. 
Interestingly, \wij's logic 
can be seen as 
the system
obtained by restricting to single-succedent sequents 
a standard sequent calculus for classical $\K$
(formulated with explicit $\Box$ and $\diam$),%
\footnote{\wij\ \cite{wij} considers a multi-succedent calculus for $\IPL$,
however an equivalent calculus can be given by adding \wij's modal rules to a single-succedent calculus
(cf.~\cite{dal:JPL} and Sec.~\ref{sec:calculi} in this paper).}
so that this correspondence is preserved at the modal level.
We then observe that $\WK$ displays a clear and elegant relation with classical $\K$, both
semantically and proof-theoretically:

\begin{itemize}
\item semantically, $\WK$ is obtained simply by incorporating hereditariness into the modal satisfaction clauses of $\K$;
\item proof-theoretically, it is 
obtained by restricting a standard sequent calculus for $\K$ to single-succedent sequents. 
\end{itemize}

Despite its interest, \wij's logic has received significantly less consideration than $\CK$ and $\IK$.
In particular, while alternative semantics and proof systems for $\WK$ have been studied \cite{Wijesekera:2005,Kojima:2012,dal:JPL,dal:tableaux},
no systematic 
 investigation of \wij-style systems
has been carried out so far.

Filling this gap is precisely the aim of this paper:
we define a family of \wij-style logics
corresponding each to a different classical modal logic
(for lack of a better name we call them \emph{W-logics}),
adopting as a guideline for 
the definition of these systems
the semantical and proof-theoretical relation
between $\WK$ and $\K$ just described. 
In particular,
in Sec.~\ref{sec:prel} we present standard 
sequent calculi and semantics for a family of  
classical modal logics. 
Then we define \const\ counterparts of these logics by
(i) restricting the calculi to single-succedent sequents (Sec.~\ref{sec:calculi}), and
(ii) expressing the classical satisfaction clauses for modal formulas over \intu\ Kripke models,
building hereditariness into these conditions (Sec.~\ref{sec:semantics}). 
The main contribution of this paper consists in showing that,
despite being
mutually independent,
for a wide family of classical modal logics the semantical and the proof-theoretical approach
return exactly the same \const\ systems.

\section{Preliminaries on classical modal logics}\label{sec:prel}
Let $\lan$ be a propositional modal language
based on a set 
$\atm$ of countably many propositional variables $p_1, p_2, p_3, ...$;
the \emph{well-formed formulas} of $\lan$ are generated by the following grammar, where $p_i$ is any element of $\atm$:
\begin{center}
$A ::= p_i \mid \bot \mid A \land A \mid A \lor A \mid A \imp A \mid \Box A \mid \diam A$.
\end{center}
%
We also 
define $\top := \bot\imp\bot$, $\neg A :=A \imp \bot$,    
and 
$A \tto B := (A \imp B) \land (B \imp A)$.

\begin{figure}
\centering
\begin{tabular}{llllllll}
\multirow{2}{*}{\ax{$A$}
\llab{\nec}
\uinf{$\Box A$}
\disp} & \axKbox &  $\Box(A \imp B) \imp (\Box A \imp \Box B)$ &  \axTbox & $\Box A \imp A$ \\
 & \axKdiam &  $\Box(A \imp B) \imp (\diam A \imp \diam B)$ \ & \axTdiam & $A \imp \diam A$ \\ 
\multirow{2}{*}{\ax{$A \imp B$}
\llab{\monbox}
\uinf{$\Box A \imp \Box B$}
\disp} & \axCbox &  $\Box A\land \Box B \imp \Box(A\land B)$ &  \axD & $\Box A \imp \diam A$ \\
 & \axCdiam &  $\diam(A \lor B) \imp \diam A \lor \diam B$ & \axPbox & $\neg\Box\bot$ \\
\multirow{2}{*}{\ax{$A \imp B$}
\llab{\mondiam}
\uinf{$\diam A \imp \diam B$}
\disp} & \axNbox & $\Box\top$ & \axPdiam & $\diam\top$ \\ 
\vspace{0.1cm}
 & \axNdiam & $\neg\diam\bot$\\
\multicolumn{5}{l}{\  \axdual \quad  $\Box A \tto \neg\diam\neg A$  \hfill \axdualand \quad $\neg(\Box A \land \diam \neg A)$
\hfill \axdualor \quad $\Box A \lor  \diam \neg A$ \ } \\
\end{tabular}

\caption{\label{fig:axioms}Modal axioms and rules.}
\end{figure}


We aim at 
enriching the family of \wij-style propositional modal logics by defining 
constructive counterparts of well-known classical modal logics. 
We consider the following classical systems, which are 
defined in the language $\lan$ extending 
(any axiomatisation of) classical propositional logic ($\CPL$) 
with the following modal axioms and rules from Fig.~\ref{fig:axioms}:
%
\begin{center}
%
%
\begin{tabular}{lllll}
$\EM$ := \axdual\ + \monbox &&  $\EMNP$ := $\EMN$ + \axPbox &&  $\EMT$ := $\EM$ + \axTbox \\
$\EMN$ := $\EM$ + \axNbox && $\EMD$ := $\EM$ + \axD &&  $\EMNT$ := $\EMN$ + \axTbox \\  
$\EMC$ := $\EM$ + \axCbox &&  $\EMND$ := $\EMN$ + \axD &&  $\EMCT$ := $\EMC$ + \axTbox \\ 
$\K$ := $\EM$ + \axNbox\ + \axCbox && $\EMCD$ := $\EMC$ + \axD &&  $\KT$ := $\K$ + \axTbox \\
$\EMP$ := $\EM$ + \axPbox && $\KD$ := $\K$ + \axD \\
\end{tabular}
\end{center}
The considered axiomatisation of $\K$ is equivalent to the more standard one with \nec\ and \axKbox\ (cf.~e.g.~\cite{Chellas:1980}).
The above list contains 
logics 
stronger than $\K$ as well as weaker (i.e., non-normal) systems. 
Note that given the duality between $\Box$ and $\diam$, 
the above systems can be equivalently defined by
replacing \monbox, \axNbox, \axCbox, \axTbox, and \axPbox, with their 
$\diam$-versions \mondiam, \axNdiam, \axCdiam, \axTdiam, \axPdiam\ 
 (Fig.~\ref{fig:axioms}).
The relations among the classical systems are displayed in Fig.~\ref{fig:cl dyag}
($\EMCP$ and $\KP$ are not considered in the list as they coincide with $\EMCD$ and $\KD$).

%
%
%
%
%
%

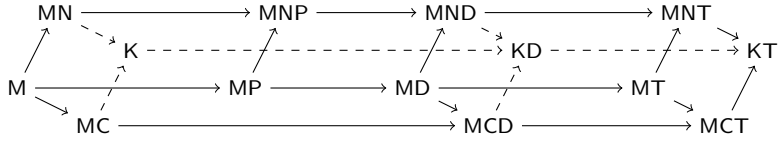
\begin{figure}
\centering
\begin{footnotesize}
\begin{tikzpicture}
%
%
%
    \node (M) at  (0,0)  {$\EM$};
    \node (MN) at (0.5, 1) {$\EMN$};
    \node  (MC) at (1, -0.5) {$\EMC$};
    \node (K) at (1.5, 0.5) {$\K$};

    \node (MP) at  (3.,0)  {$\EMP$};
    \node (MNP) at (3.5, 1) {$\EMNP$};

    \node (MD) at  (5.2,0)  {$\EMD$};
    \node (MND) at (5.7, 1) {$\EMND$};
    \node  (MCD) at (6.2, -0.5) {$\EMCD$};
    \node (KD) at (6.7, 0.5) {$\KD$};

    \node (MT) at  (8.3,0)  {$\EMT$};
    \node (MNT) at (8.8, 1) {$\EMNT$};
    \node  (MCT) at (9.3, -0.5) {$\EMCT$};
    \node (KT) at (9.8, 0.5) {$\KT$};

	\draw[->] (M) -- (MN);
	\draw[->] (M) -- (MC);
	\draw[->, dashed] (MN) -- (K);	
	\draw[->, dashed] (MC) -- (K);
	\draw[->] (MP) -- (MNP);
	\draw[->] (MD) -- (MND);
	\draw[->] (MD) -- (MCD);
	\draw[->, dashed] (MND) -- (KD);	
	\draw[->, dashed] (MCD) -- (KD);
	\draw[->] (MT) -- (MNT);
	\draw[->] (MT) -- (MCT);
	\draw[->] (MNT) -- (KT);	
	\draw[->] (MCT) -- (KT);

	\draw[->] (MN) -- (MNP);
	\draw[->] (MNP) -- (MND);
	\draw[->] (MND) -- (MNT);
	\draw[->] (MC) -- (MCD);
	\draw[->] (MCD) -- (MCT);
	\draw[->] (MP) -- (MD);

	\draw[->] (M) -- (MP);
	\draw[->] (MD) -- (MT);
	\draw[->, dashed] (KD) -- (KT);
	\draw[->, dashed] (K) -- (KD);
\end{tikzpicture}
\end{footnotesize}
\caption{\label{fig:cl dyag} Dyagram of classical modal logics.}
\end{figure}

We will define \const\ counterparts of classical modal logics
by restricting suitable sequent calculi for the classical systems.
We consider to this purpose
the 
calculi for classical modal logics
defined by the rules in Fig.~\ref{fig:classical sequent calculi}.
As usual, we call \emph{sequent} any pair $\G\seq\D$, where $\G$ and $\D$ are finite, possibly empty multisets of formulas of $\lan$. 
A sequent $\G\seq\D$ is interpreted as a formula of $\lan$ as $\AND\G \imp \OR\D$
if $\G$ is non-empty, and it is interpreted as $\OR\D$ if $\G$ is empty, where
$\OR\emptyset$ is interpreted as $\bot$.
For every multiset $\G = A_1, ..., A_n$,
 we denote with $\Box\G$ and $\diam\G$ the multisets
$\Box A_1, ..., \Box A_n$ and $\diam A_1, ..., \diam A_n$, respectively. 
We consider
G3-style calculi with all structural rules admissible (cf.~\cite[Ch.~3]{Troelstra:2000}).
Moreover, we consider a formulation of the calculi in which both $\Box$ and $\diam$ occur explicitly,
this formulation will be
 needed 
to handle the constructive systems,
where the modalities are not interdefinable
(for a sequent calculus with explicit $\Box$ and $\diam$ see e.g.~\cite[Ch.~9]{Troelstra:2000}, for sequent calculi for non-normal modal logics see \cite{Lavendhomme:2000,Lellmann:2019}).
For each logic $\logic$, the corresponding calculus $\seqlogic$ 
contains the propositional rules 
and the following modal rules: 
%
\begin{center}
\begin{small}
\begin{tabular}{ll}
$\seqEM$ := \ruleMbox\ + \ruleMdiam\ + \ruledualandM\ + \ruledualorM &
$\seqEMP$ := $\seqEM$ + \rulePbox\ + \rulePdiam \\
$\seqEMN$ := $\seqEM$ + \ruleNbox\ + \ruleNdiam &
$\seqEMNP$ := $\seqEMN$ + \rulePbox\ + \rulePdiam \\
$\seqEMC$ := \ruleCbox\ + \ruleCdiam\ + \ruledualandC\ + \ruledualorC \\
\vspace{0.1cm}
$\seqK$ := \ruleKbox\ + \ruleKdiam \\
$\seqEMD$ := $\seqEM$ + \ruleD\ + \ruleDbox\ + \ruleDdiam\ + \rulePbox\ + \rulePdiam &
$\seqEMT$ := $\seqEM$ + \ruleTbox\ + \ruleTdiam \\
$\seqEMND$ := $\seqEMN$ + \ruleD\ + \ruleDbox\ + \ruleDdiam\ + \rulePbox\ + \rulePdiam &
$\seqEMNT$ := $\seqEMN$ + \ruleTbox\ + \ruleTdiam \\
$\seqEMCD$ := $\seqEMC$ + \ruleCD &
$\seqEMCT$ := $\seqEMC$ + \ruleTbox\ + \ruleTdiam \\
$\seqKD$ := $\seqK$ + \ruleCD &
$\seqKT$ := $\seqK$ + \ruleTbox\ + \ruleTdiam \\
\end{tabular}
\end{small}
\end{center}

\begin{figure}
\begin{small}
\centering
%
\textbf{Propositional rules} 
\hfill 
\init\ \  $\G, p \seq p, \D$ 
\hfill
\lbot\ \  $\G, \bot\seq \D$

\vspace{0.3cm}
\ax{$\G \seq A, \D$}
\ax{$\G, B \seq \D$}
\llab{\limp}
\binf{$\G, A \imp B \seq \D$}
\disp
\hfill
\ax{$\G, A\seq B, \D$}
\llab{\rimp}
\uinf{$\G \seq A\imp B, \D$}
\disp
\hfill
\ax{$\G \seq A, \D$}
\ax{$\G \seq B, \D$}
\llab{\rland}
\binf{$\G \seq A\land B, \D$}
\disp

\vspace{0.2cm}
\ax{$\G, A, B \seq \D$}
\llab{\lland}
\uinf{$\G, A\land B \seq \D$}
\disp
\hfill
\ax{$\G \seq A, B, \D$}
\llab{\rlor}
\uinf{$\G \seq A\lor B, \D$}
\disp
\hfill
\ax{$\G, A \seq \D$}
\ax{$\G, B \seq \D$}
\llab{\llor}
\binf{$\G, A \lor B \seq \D$}
\disp

%

\vspace{0.3cm}
\textbf{Modal rules} \hfill \quad

\vspace{0.2cm}
\ax{$A \seq B$}
\llab{\ruleMbox}
\uinf{$\G, \Box A \seq \Box B, \D$}
\disp
\hfill
\ax{$A \seq B$}
\llab{\ruleMdiam}
\uinf{$\G, \diam A \seq \diam B, \D$}
\disp
\hfill
\ax{$\G, A \seq B, \D$}
\llab{\ruleCbox}
\uinf{$\G', \Box\G, \Box A \seq \Box B,  \diam\D, \D'$}
\disp

\vspace{0.2cm}
\ax{$A, B \seq$}
\llab{\ruledualandM}
\uinf{$\G, \Box A, \diam B \seq \D$}
\disp
\hfill
\ax{$\G, A \seq B, \D$}
\llab{\ruleCdiam}
\uinf{$\G', \Box\G, \diam A \seq \diam B, \diam\D, \D'$}
\disp
\hfill
\ax{$A \seq B$}
\llab{\ruleD}
\uinf{$\G, \Box A \seq \diam B, \D$}
\disp

\vspace{0.2cm}
\ax{$\seq A, B$}
\llab{\ruledualorM}
\uinf{$\G \seq \Box A,  \diam B, \D$}
\disp
\hfill
\ax{$\G, A, B \seq$}
\llab{\ruledualandC}
\uinf{$\G', \Box\G, \Box A, \diam B \seq \D$}
\disp
\hfill
\ax{$A, B \seq$}
\llab{\ruleDbox}
\uinf{$\G, \Box A, \Box B \seq \D$}
\disp

\vspace{0.2cm}
\ax{$\seq A, B, \D$}
\llab{\ruledualorC}
\uinf{$\G \seq \Box A,  \diam B, \diam\D, \D'$}
\disp
\hfill
\ax{$\G \seq A, \D$}
\llab{\ruleKbox}
\uinf{$\G', \Box\G \seq \Box A,  \diam\D, \D'$}
\disp
\hfill
\ax{$\seq A, B$}
\llab{\ruleDdiam}
\uinf{$\G \seq \diam A, \diam B, \D$}
\disp

\vspace{0.2cm}
\ax{$\G, A \seq \D$}
\llab{\ruleKdiam}
\uinf{$\G', \Box\G, \diam A \seq \diam\D, \D'$}
\disp
\hfill
\ax{$\seq A$}
\llab{\ruleNbox}
\uinf{$\G \seq \Box A, \D$}
\disp
\hfill
\ax{$A \seq$}
\llab{\ruleNdiam}
\uinf{$\G, \diam A \seq \D$}
\disp
\hfill
\ax{$A \seq$}
\llab{\rulePbox}
\uinf{$\G, \Box A \seq \D$}
\disp

\vspace{0.2cm}
\ax{$\seq A$}
\llab{\rulePdiam}
\uinf{$\G \seq\diam A, \D$}
\disp
\hfill
\ax{$\G, \Box A, A \seq \D$}
\llab{\ruleTbox}
\uinf{$\G, \Box A \seq \D$}
\disp
\hfill
\ax{$\G \seq A, \diam A, \D$}
\llab{\ruleTdiam}
\uinf{$\G \seq \diam A, \D$}
\disp
\hfill
\ax{$\G \seq \D$}
\llab{\ruleCD}
\uinf{$\G', \Box\G \seq \diam\D, \D'$}
\disp

\end{small}
\caption{\label{fig:classical sequent calculi} 
Sequent rules for classical modal logics
(where $|\G|, |\G'|, |\D|, |\D'| \geq 0$).}
\end{figure}



Each calculus contains two duality rules $\land$-dual and $\lor$-dual
(in $\seqK$ and its extensions they are obtained as the particular cases of \ruleKdiam\ and \ruleKbox\
with 
$\D=\emptyset$,
respectively 
$\G=\emptyset$).\label{correction1}
The duality rules 
allow one to derive
the
Hilbert-style rules
\begin{center}
\ax{$\neg(A \land B)$}
\llab{\Rdualand}
\uinf{$\neg(\Box A \land \diam B)$}
\disp
\qquad\quad
\ax{$A \lor B$}
\llab{\Rdualor}
\uinf{$\Box A \lor \diam B$}
\disp
\end{center}
which are classically equivalent to \axdualand\ and \axdualor\ (Fig.~\ref{fig:axioms}),
and taken together are equivalent to \axdual.
%
The rules \ruleCbox\ and \ruleCdiam\ can be seen as the generalisation of \ruleMbox\ and \ruleMdiam\
to $n$-principal formulas in the antecedent, respectively in the consequent.
Differently from \ruleMbox, the rule \ruleCbox\ involves also $\diam$-formulas,
similarly \ruleCdiam\ involves also $\Box$-formulas,
this is needed in order to preserve the admissibility of cut in the calculus.
Note also that 
\ruleCbox\ and \ruleCdiam\ are distinct from 
\ruleKbox\ and \ruleKdiam,
since \ruleCbox\ and \ruleCdiam\ are applicable only to sequents with non-empty antecedent, respectively non-empty consequent, 
while 
this is not required for \ruleKbox\ and \ruleKdiam.
Finally, 
the calculi $\seqEMD$ and $\seqEMND$
contain also the rules \rulePbox\ and \rulePdiam, this is needed to ensure admissibility of contraction \cite{dal:JLC,Orlandelli},
and is consistent with the fact that the axioms \axPbox\ and \axPdiam\ are derivable in $\EMD$.
%
Each calculus $\seqlogic$ is a calculus for the corresponding logic $\logic$ in the following sense:




\begin{theorem}
For every considered 
classical modal
logic $\logic$, $\seqlogic\vd \G\seq\D$ if and only if $\logic\vd\AND\G\imp\OR\D$. 
\end{theorem}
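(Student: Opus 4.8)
The statement asserts soundness and completeness of each sequent calculus $\seqlogic$ with respect to the Hilbert-style axiomatisation of $\logic$, and the plan is to prove the two implications separately. Throughout I would rely on the facts, part of the G3-style design recalled above (cf.~\cite[Chs.~3 and 9]{Troelstra:2000}), that every $\seqlogic$ admits cut and height-preserving weakening and contraction, that the generalised initial sequents $\G, A \seq A, \D$ are derivable for every formula $A$, and that all propositional rules are height-preserving invertible.

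For the implication from left to right (soundness) I would argue by induction on the height of a $\seqlogic$-derivation that $\seqlogic\vdash\G\seq\D$ implies $\logic\vdash\AND\G\imp\OR\D$. Initial sequents and the propositional rules are treated exactly as for the underlying propositional calculus, using only $\CPL$-reasoning. Each modal rule is then validated by the corresponding axiom or rule of $\logic$ together with $\CPL$: for instance $\ruleMbox$, since from the premise $A\seq B$ one gets $A\imp B\in\logic$, whence $\Box A\imp\Box B\in\logic$ by $\monbox$ and then $\AND\G\land\Box A\imp\Box B\lor\OR\D\in\logic$; analogously $\ruleNbox$, $\ruleNdiam$, $\ruleD$, $\ruleDbox$, $\ruleDdiam$, $\ruleTbox$, $\rulePbox$ and their duals, via $\axNbox$/$\axNdiam$, $\axD$, $\axTbox$, $\axPbox$/$\axPdiam$; the two duality rules of $\seqlogic$, whose soundness comes from $\Rdualand$ and $\Rdualor$, noted in the text to be derivable from $\axdual$; and $\ruleCbox$, $\ruleCdiam$, $\ruleCD$, $\ruleKbox$, $\ruleKdiam$ via $\axCbox$/$\axCdiam$ respectively $\axKbox$/$\axKdiam$, using in the non-normal cases also that $\EM$ derives both $\monbox$ and $\mondiam$ (the latter from $\monbox$ and $\axdual$), which is what lets one move the $\diam$-side-formulas of $\ruleCbox$ in and out of a $\Box$.

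For the implication from right to left (completeness) the plan is to show, by induction on a Hilbert derivation, that $\seqlogic\vdash\seq\phi$ for every theorem $\phi$ of $\logic$. The base cases need (i) that the propositional fragment of $\seqlogic$ derives $\seq\phi$ for every $\CPL$-theorem $\phi$ (standard), and (ii) an explicit $\seqlogic$-derivation of $\seq\psi$ for every modal axiom $\psi$ of $\logic$, e.g.\ $\seq\axdual$ from the duality rules of $\seqlogic$, $\seq\Box(A\imp B)\imp(\Box A\imp\Box B)$ from $A\imp B, A\seq B$ by $\ruleKbox$ and $\rimp$, $\seq\Box A\imp A$ from $\Box A, A\seq A$ by $\ruleTbox$ and $\rimp$, $\seq\neg\diam\bot$ from $\bot\seq$ by $\lbot$ and $\ruleNdiam$, and similarly for $\axNbox$, $\axCbox$, $\axCdiam$, $\axD$, $\axPbox$, $\axPdiam$. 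The inductive step covers the rules of the axiomatisation: modus ponens is simulated by cut, and the modal rule $\monbox$ is derivable in $\seqlogic$, since from $\seqlogic\vdash\seq A\imp B$ one cuts against the derivable $A, A\imp B\seq B$ to obtain $A\seq B$, applies $\ruleMbox$ (or, in the calculi lacking $\ruleMbox$, namely $\seqEMC$, $\seqK$ and their extensions, $\ruleCbox$ respectively $\ruleKbox$ with empty side contexts) to obtain $\Box A\seq\Box B$, and concludes with $\rimp$. Finally, if $\logic\vdash\AND\G\imp\OR\D$ then $\seqlogic\vdash\seq\AND\G\imp\OR\D$, and combining this by $\limp$ and one cut with the easily derivable sequents $\G\seq\AND\G$ and $\OR\D\seq\D$ gives $\seqlogic\vdash\G\seq\D$.

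The difficulty is combinatorial rather than conceptual. On the soundness side one must verify, logic by logic, that the more intricate rules $\ruleCbox$, $\ruleCdiam$, $\ruleCD$ --- which, unlike $\ruleMbox$ and $\ruleMdiam$, mix $\Box$- and $\diam$-formulas and carry the side contexts $\G', \D'$ only in the conclusion --- are not stronger than the axioms $\axCbox$/$\axCdiam$ (together with $\axdual$) they are meant to capture, the $\diam$-occurrences being handled by passing through $\axdual$ and the derived rule $\mondiam$. On the completeness side the delicate point is that the monotonicity rule $\monbox$, used to simulate the homonymous Hilbert rule, is not a primitive rule of the calculi $\seqEMC$, $\seqK$ and their extensions, and must there be re-derived from $\ruleCbox$, $\ruleKbox$ and cut. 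With cut admissibility in hand, everything else reduces to routine checks against Fig.~\ref{fig:axioms} and Fig.~\ref{fig:classical sequent calculi}.
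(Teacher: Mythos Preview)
The paper states this theorem without proof: it is presented as a preliminary fact about the classical calculi, with the G3-style structural properties attributed to \cite[Ch.~3]{Troelstra:2000} and the modal rules to \cite{Lavendhomme:2000,Lellmann:2019}. There is therefore nothing in the paper to compare your argument against.

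That said, your sketch is the standard route and is correct in outline. A couple of remarks. First, you lean on cut admissibility for the \emph{classical} calculi $\seqlogic$; the paper only proves cut admissibility for the single-succedent calculi $\seqWstar$ (Theorem~\ref{th:cut elim}), so for the classical systems you are, like the paper, importing this from the literature rather than proving it. Second, your identification of the only non-routine point---that $\ruleCbox$, $\ruleCdiam$, $\ruleCD$ carry $\diam$-side-formulas which must be discharged via $\axdual$ and the derived $\mondiam$ on the soundness side---is accurate, and your observation that $\monbox$ must be recovered from $\ruleCbox$/$\ruleKbox$ in the calculi lacking $\ruleMbox$ is also to the point. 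With those caveats the argument goes through.
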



We now move to the semantics.
Since non-normal logics do not have a (simple) relational semantics,%
\footnote{Cf.~\cite{Calardo:2014} for multi-relational semantics for non-normal modal logics, and
\cite{Priest:2008,dal:JLC} for relational semantics with ``non-normal'' worlds for the logics 
containing \axCbox\ but not \axNbox.}
we consider a neighbourhood semantics that uniformly covers all considered 
systems.

\begin{definition}\label{def:neigh model}
A \emph{neighbourhood model} is a tuple $\M = \langle \W, \N, \V\rangle$, 
where 
$\W$ is a non-empty set of worlds, 
$\N$ is a function $\W\longrightarrow \pow(\pow(\W))$,
called neighbourhood function, and
$\V$ is a valuation function $\atm \longrightarrow \pow(\W)$.
The forcing relation $\M, w\Vd A$ is 
inductively defined as follows:
\begin{center}
\begin{tabular}{lllll}
$\M, w \Vd p$ & iff & $w\in\V(p)$. \\
$\M, w \not\Vd \bot$. \\
$\M, w \Vd B \land C$ & iff & $\M, w \Vd B$ and $\M, w \Vd C$. \\
$\M, w \Vd B \lor C$ & iff & $\M, w \Vd B$ or $\M, w \Vd C$. \\
$\M, w \Vd B \imp C$ & iff & $\M, w \not\Vd B$ or $\M, w \Vd C$. \\
$\M, w\Vd\Box B$ & iff & there is $\alpha\in\N(w)$ s.t.~for all $v\in\alpha$, $\M, v\Vd B$. \\
$\M, w\Vd\diam B$ & iff & for all $\alpha\in\N(w)$, there is $v\in\alpha$ s.t.~$\M, v\Vd B$. \\ 
\end{tabular}
\end{center}
We consider the following properties on neighbourhood models:
\begin{center}
\begin{tabular}{llll}
(\cC) \ If $\alpha,\beta\in\N(w)$, then $\alpha\cap\beta\in\N(w)$. &&
(\cN) \ $\N(w)\not=\emptyset$. \\
(\cD) \  If $\alpha,\beta\in\N(w)$, then $\alpha\cap\beta\not=\emptyset$. &&
(\cP) \  $\emptyset\notin\N(w)$. \\
(\cT) \ If $\alpha\in\N(w)$, then $w\in\alpha$. \\
\end{tabular}
\end{center}
We say that $\M$ is a model for a logic $\logic$ if it satisfies the condition (\cX) for every modal 
axiom \varaxbox\ of $\logic$
(among \axCbox, \axNbox, \axTbox, \axD, \axPbox).
As usual, we say that a formula $A$ is valid in a model 
$\M$, written $\M\models A$, if $\M,w\Vd A$ for every world $w$ of $\M$.
\end{definition}

In the following we simple write $w\Vd A$ when $\M$ is clear from the context.
We also use the following abbreviations:
\begin{center}
$\alpha\ufor A$ := for all $w\in\alpha$, $w\Vd A$; \qquad
$\alpha\efor A$ := there is $w\in\alpha$ s.t.~$w\Vd A$. 
\end{center}
Using these abbreviations,
the 
satisfaction clauses
for modal formulas can be equivalently written as
\begin{center}
\begin{tabular}{lllll}
$w\Vd\Box B$ & iff & there is $\alpha\in\N(w)$ such that $\alpha\ufor B$. \\
$w\Vd\diam B$ & iff & for all $\alpha\in\N(w)$, $\alpha\efor B$. \\ 
\end{tabular}
\end{center}

The following holds (cf.~\eg~\cite{Chellas:1980,Pacuit:2017}).

\begin{theorem}
For every considered 
classical modal
logic $\logic$, $\logic\vd A$ if and only if $\M\models A$ for all neighbourhood models $\M$ for $\logic$. 
\end{theorem}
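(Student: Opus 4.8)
The plan is to prove the two directions separately --- soundness by induction on Hilbert derivations, completeness by a canonical neighbourhood model --- using throughout that every logic $\logic$ in our list contains \axdual\ together with \monbox\ (the rule \mondiam\ being in fact derivable from these classically). Consequently $\Box$ is monotone and $\diam B$ behaves as $\neg\Box\neg B$, so the modal reasoning essentially reduces to the $\Box$-fragment.

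For soundness, I would show by induction on the length of a $\logic$-derivation of $A$ that $\M, w \Vd A$ for every world $w$ of every model $\M$ for $\logic$. The $\CPL$-axioms and modus ponens are routine, and \monbox, \mondiam\ preserve validity (if $B \imp C$ is valid then $\alpha \ufor B$ gives $\alpha \ufor C$, and $\alpha \efor B$ gives $\alpha \efor C$). The axiom \axdual\ is valid in every neighbourhood model, since unfolding negations classically shows that $w \Vd \neg\diam\neg B$ holds exactly when there is $\alpha \in \N(w)$ with $\alpha \ufor B$, that is, exactly when $w \Vd \Box B$. For each remaining box-axiom of $\logic$, I would check that the corresponding frame condition makes the axiom valid in any model for $\logic$: \axCbox\ from (\cC) (if $\alpha \ufor B$, $\beta \ufor C$ with $\alpha, \beta \in \N(w)$, then $\alpha \cap \beta \in \N(w)$ and $\alpha \cap \beta \ufor B \land C$); \axNbox\ from (\cN) (every $\alpha \in \N(w)$ has $\alpha \ufor \top$); \axTbox\ from (\cT); \axPbox\ from (\cP) (otherwise $w \Vd \Box\bot$ would force $\emptyset \in \N(w)$); and \axD\ from (\cD) (given $\alpha \in \N(w)$ with $\alpha \ufor B$ and any $\beta \in \N(w)$, condition (\cD) yields $v \in \alpha \cap \beta$, so $\beta \efor B$, hence $w \Vd \diam B$). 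Since being a model for $\logic$ means precisely satisfying the conditions attached to the box-axioms of $\logic$, this establishes the left-to-right implication.

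For completeness I would argue contrapositively via the canonical model. Let $\W_{\logic}$ be the set of maximal $\logic$-consistent sets of formulas, put $w \in \V(p)$ iff $p \in w$, and set $\N(w) = \{\, |B| : \Box B \in w \,\}$, where $|B| = \{\, v \in \W_{\logic} : B \in v \,\}$. The heart of the argument is the truth lemma $w \Vd A$ iff $A \in w$, by induction on $A$; the Boolean cases use maximal consistency in the usual way. For $\Box B$: if $\Box B \in w$ then $|B| \in \N(w)$ and $|B| \ufor B$ by the induction hypothesis, so $w \Vd \Box B$; conversely, if $w \Vd \Box B$ then some $|C| \in \N(w)$ has $|C| \ufor B$, so $C \in v$ implies $B \in v$ for every $v \in \W_{\logic}$, whence $\logic \vd C \imp B$, and then \monbox\ applied to $\Box C \in w$ gives $\Box B \in w$. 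The $\diam B$ case is symmetric, passing through \axdual\ to rewrite $\neg\diam B \in w$ as $\Box\neg B \in w$ and back. Next I would check that $\M_{\logic}$ is a model for $\logic$: (\cN) since $\Box\top$ is a theorem, so $|\top| \in \N(w)$; (\cC) since $|B| \cap |C| = |B \land C|$ and \axCbox\ is a theorem; (\cT) from \axTbox; (\cP) since $|B| = \emptyset$ would give $\logic \vd \neg B$, so \monbox\ and $\Box B \in w$ would yield $\Box\bot \in w$, against \axPbox; and (\cD) from \axD\ together with \axdual\ and \monbox\ (as below). Finally, if $\logic \not\vd A$ then $\{\neg A\}$ is $\logic$-consistent, hence contained in some $w \in \W_{\logic}$, so the truth lemma gives $w \not\Vd A$ and thus $\M_{\logic} \not\models A$.

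The step I expect to be the main obstacle is the bookkeeping matching frame conditions to axioms in the canonical model, and especially the case of \axD, whose condition (\cD) couples the two modalities. Here one argues: if $|B|, |C| \in \N(w)$ with $|B| \cap |C| = \emptyset$, then $B \imp \neg C$ is true in every maximal consistent set, hence a theorem, so \monbox\ gives $\Box\neg C \in w$ and \axdual\ then gives $\neg\diam C \in w$ (classically $\neg\neg C \leftrightarrow C$); but $\Box C \in w$ with \axD\ gives $\diam C \in w$, a contradiction, so $|B| \cap |C| \neq \emptyset$. Analogous cross-modal steps appear in the $\diam$-clause of the truth lemma. Everything else is routine neighbourhood-semantics reasoning, so I would present it concisely, citing~\cite{Chellas:1980,Pacuit:2017} for the standard parts and spelling out only the steps specific to our explicit-$\Box/\diam$ formulation and to the particular systems in our list.
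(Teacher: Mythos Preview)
Your proposal is essentially correct and follows the standard route (soundness by induction on derivations, completeness via a canonical model with $\N(w)=\{|B|:\Box B\in w\}$, truth lemma, verification of the frame conditions from the corresponding axioms). However, the paper does not actually prove this theorem: it treats it as a known result and simply writes ``The following holds (cf.~e.g.~\cite{Chellas:1980,Pacuit:2017})'' with no further argument. So there is nothing to compare your approach against---you have supplied precisely the standard proof that is deferred to those references, tailored to the monotone neighbourhood semantics and the explicit $\Box/\diam$ formulation used here.

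One minor remark: your argument is sound, but in a paper where this theorem is a preliminary fact rather than a contribution, the level of detail you give (especially the cross-modal bookkeeping for \axD\ and the $\diam$-clause of the truth lemma) would typically be replaced by the same citation the paper uses.
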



\section{Single-succedent calculi and W-logics}\label{sec:calculi} 


We now define a family of \wij-style \const\ modal logics 
corresponding to the classical logics considered in Sec.~\ref{sec:prel}.
In particular, we firstly define \const\ modal calculi 
by restricting the classical calculi from Sec.~\ref{sec:prel} to single-succedent sequents,
and study their structural properties.
Then we define equivalent axiomatic systems,
and prove some fundamental properties of them.


%

\begin{figure}
\begin{small}
\centering
%
\textbf{Propositional rules} 
\hfill 
\iinit\ \  $\G, p \seq p$ 
\hfill
\ilbot\ \  $\G, \bot\seq \D$

\vspace{0.3cm}
\ax{$\G, A \imp B \seq A$}
\ax{$\G, B \seq \D$}
\llab{\ilimp}
\binf{$\G, A \imp B \seq \D$}
\disp
\hfill
\ax{$\G, A\seq B$}
\llab{\irimp}
\uinf{$\G \seq A\imp B$}
\disp
\hfill
\ax{$\G \seq A$}
\ax{$\G \seq B$}
\llab{\irland}
\binf{$\G \seq A\land B$}
\disp

\vspace{0.2cm}
\ax{$\G, A, B \seq \D$}
\llab{\illand}
\uinf{$\G, A\land B \seq \D$}
\disp
\hfill
\ax{$\G \seq A_i$}
\llab{\irlor}
\rlab{($i \in\{1, 2\}$)}
\uinf{$\G \seq A_1\lor A_2$}
\disp
\hfill
\ax{$\G, A \seq \D$}
\ax{$\G, B \seq \D$}
\llab{\illor}
\binf{$\G, A \lor B \seq \D$}
\disp

\vspace{0.3cm}
\textbf{Modal rules} \hfill \quad

\vspace{0.2cm}
\ax{$\G, A \seq B$}
\llab{\ruleiMbox}
\uinf{$\G, \Box A \seq \Box B$}
\disp
\hfill
\ax{$\G, A \seq B$}
\llab{\ruleiMdiam}
\uinf{$\G, \diam A \seq \diam B$}
\disp
\hfill
\ax{$A, B \seq$}
\llab{\ruleidualandM}
\uinf{$\G, \Box A, \diam B \seq \D$}
\disp
\hfill
\ax{$\seq A$}
\llab{\ruleiNbox}
\uinf{$\G \seq \Box A$}
\disp

\vspace{0.2cm}
\ax{$\G, A \seq B$}
\llab{\ruleiCbox}
\uinf{$\G', \Box\G, \Box A \seq \Box B$}
\disp
\hfill
\ax{$\G, A \seq B$}
\llab{\ruleiCdiam}
\uinf{$\G', \Box\G, \diam A \seq \diam B$}
\disp
\hfill
\ax{$\G, A, B \seq$}
\llab{\ruleidualandC}
\uinf{$\G', \Box\G, \Box A, \diam B \seq \D$}
\disp

\vspace{0.2cm}
\ax{$A \seq$}
\llab{\ruleiNdiam}
\uinf{$\G, \diam A \seq \D$}
\disp
\hfill
\ax{$\G \seq A$}
\llab{\ruleiKbox}
\uinf{$\G', \Box\G \seq \Box A$}
\disp
\hfill
\ax{$\G, A \seq B$}
\llab{\ruleiKdiam}
\uinf{$\G', \Box\G, \diam A \seq \diam B$}
\disp
\hfill
\ax{$\G \seq A$}
\llab{\ruleiTdiam}
\uinf{$\G \seq \diam A$}
\disp

\vspace{0.2cm}
\ax{$\G, A \seq $}
\llab{\ruleidualandK}
\uinf{$\G', \Box\G, \diam A \seq \D$}
\disp
\hfill
\ax{$\G, \Box A, A \seq \D$}
\llab{\ruleiTbox}
\uinf{$\G, \Box A \seq \D$}
\disp
\hfill
\ax{$A \seq$}
\llab{\ruleiPbox}
\uinf{$\G, \Box A \seq \D$}
\disp
\hfill
\ax{$\seq A$}
\llab{\ruleiPdiam}
\uinf{$\G \seq\diam A$}
\disp

\vspace{0.2cm}
\ax{$A \seq B$}
\llab{\ruleiD}
\uinf{$\G, \Box A \seq \diam B$}
\disp
\hfill
\ax{$A, B \seq$}
\llab{\ruleiDbox}
\uinf{$\G, \Box A, \Box B \seq \D$}
\disp
\hfill
\ax{$\G \seq A$}
\llab{\ruleiCD}
\uinf{$\G', \Box\G \seq  \diam A$}
\disp
\hfill
\ax{$\G \seq$}
\llab{\ruleiCDbox}
\uinf{$\G', \Box\G \seq \D$}
\disp
%
%

\end{small}
\caption{\label{fig:wij sequent calculi} 
Sequent rules for
W-logics
(where $|\G|, |\G'| \geq 0$, and  $0\leq |\D| \leq 1$).} 
\end{figure}

The rules obtained by restricting sequents to at most one formula in the consequent
are displayed in Fig.~\ref{fig:wij sequent calculi}.
This restriction modifies 
the classical modal rules 
 in two ways: 
first, the rules \ruledualorM, \ruledualorR,  and \ruleDdiam\
are dropped because they require at least two formulas in the consequent of sequents.
Second, 
the right context is deleted from
all rules with a principal formula in the consequent of sequents,
namely \ruleTdiam, \ruleCbox, \ruleCdiam, and \ruleKbox.
Note in particular that $\diam\D$ is removed from \ruleCbox, \ruleCdiam, and \ruleKbox,
moreover $\diam A$ is removed from the premiss of \ruleiTdiam\
(the copy of $\diam A$ into the premiss of \ruleTdiam\ is needed in the classical calculus in order to ensure admissibility of 
right contraction, which is not expressible 
in the \intu\ calculus).
Note also that \ruleKdiam\ and \ruleCD\ are split into two rules,
respectively \ruleiKdiam\ and \ruleidualandK, and \ruleiCD\ and \ruleiCDbox,
which correspond to the cases in which the consequent of the premiss of \ruleKdiam\ or \ruleCD\
is or is not empty.
Finally, note that 
 \ruleiCbox\ and \ruleiKbox\ become 
  equivalent.
Concerning the propositional rules, \limp\ is modified as usual by copying the principal implication into the left premiss in
order to ensure admissibility of contraction \cite{Troelstra:2000},
and \rlor\ is replaced by its single-succedent version. 
All other rules remain unchanged.
The resulting calculi $\seqWlogic$
are defined by
extending 
the set of \intu\ propositional 
rules with the following modal rules:
%
\begin{center}
\begin{small}
\begin{tabular}{ll}
$\seqWM$ := \ruleiMbox\ + \ruleiMdiam\ + \ruleidualandM &
$\seqWMP$ := $\seqWM$ + \ruleiPbox\ + \ruleiPdiam \\
$\seqWMN$ := $\seqWM$ + \ruleiNbox\ + \ruleiNdiam &
$\seqWMNP$ := $\seqWMN$ + \ruleiPbox\ + \ruleiPdiam \\
$\seqWMC$ := \ruleiCbox\ + \ruleiCdiam\ + \ruleidualandC \\
\vspace{0.1cm}
$\seqWK$ := \ruleiKbox\ + \ruleiKdiam\ + \ruleidualandK \\
$\seqWMD$ := $\seqWM$ + \ruleiD\ + \ruleiDbox\ + \ruleiPbox\ + \ruleiPdiam &
$\seqWMT$ := $\seqWM$ + \ruleiTbox\ + \ruleiTdiam \\
$\seqWMND$ := $\seqWMN$ + \ruleiD\ + \ruleiDbox\ + \ruleiPbox\ + \ruleiPdiam &
$\seqWMNT$ := $\seqWMN$ + \ruleiTbox\ + \ruleiTdiam \\
$\seqWMCD$ := $\seqWMC$ + \ruleiCD\ + \ruleiCDbox &
$\seqWMCT$ := $\seqWMC$ + \ruleiTbox\ + \ruleiTdiam \\
$\seqWKD$ := $\seqWK$ + \ruleiCD\ + \ruleiCDbox &
$\seqWKT$ := $\seqWK$ + \ruleiTbox\ + \ruleiTdiam \\
\end{tabular}
\end{small}
\end{center}


Note  that the modal rules of $\seqWK$
coincide with those of \wij~\cite{wij}
(except that they have side context in the conclusion in order to embed weakening
in their application).
$\seqWK$ coincides with the calculus $\mathsf{G.CCDL^p}$
for $\WK$ proposed in \cite{dal:JPL}.


From the point of view of the derivable principles, we observe two main
consequences of 
the restriction of the calculi to single-succedent sequents.
First, 
the rule \Rdualor\
is no longer derivable
in the calculi.
This is due to the 
absence of \ruledualorM\ and \ruledualorC,
and the 
elimination of $\diam$-formulas from the conclusion of
\ruleiKbox.
Second, \axCdiam\ is not derivable in $\seqWMC$, $\seqWK$
and their extensions,
this is due to the restriction of \ruleiCdiam\ and \ruleiKdiam\ to only one $\diam$-formula
in the right-hand side of the conclusion.
By contrast, all other modal principles from Fig.~\ref{fig:axioms}
are still derivable in the corresponding calculi
(cf.~derivations in Fig.~\ref{fig:derivations}).

\begin{figure}
\centering
\begin{small}
\begin{tabular}{lll}
\vspace{0.3cm}
\ax{$A, A \imp \bot \seq A$}
\ax{$A, \bot \seq$}
\rlab{\ilimp}
\binf{$A, A \imp \bot \seq$}
\rlab{\ruleidualandM}
\uinf{$\Box A, \diam (A \imp \bot) \seq\bot$}
\rlab{\illand}
\uinf{$\Box A \land \diam (A \imp \bot) \seq\bot$}
\rlab{\irimp}
\uinf{$\seq \Box A \land \diam (A \imp \bot) \imp\bot$}
\disp
&&
\ax{$A, B \seq A$}
\ax{$A, B \seq B$}
\rlab{\irland}
\binf{$A, B \seq A \land B$}
\rlab{\ruleiCbox}
\uinf{$\Box A, \Box B \seq \Box(A \land B)$}
\rlab{\illand}
\uinf{$\Box A \land \Box B \seq \Box(A \land B)$}
\rlab{\irimp}
\uinf{$\seq \Box A \land \Box B \imp \Box(A \land B)$}
\disp
\\

\ax{$A \imp B, A \seq A$}
\ax{$B, A \seq B$}
\rlab{\ilimp}
\binf{$A \imp B, A \seq B$}
\rlab{\ruleiCbox}
\uinf{$\Box(A \imp B), \Box A  \seq \Box B$}
\rlab{\irimp}
\uinf{$\Box(A \imp B) \seq \Box A \imp \Box B$}
\rlab{\irimp}
\uinf{$\seq \Box(A \imp B) \imp (\Box A \imp \Box B)$}
\disp
&&
\ax{$A \imp B, A \seq A$}
\ax{$B, A \seq B$}
\rlab{\ilimp}
\binf{$A \imp B, A \seq B$}
\rlab{\ruleiCdiam}
\uinf{$\Box(A \imp B), \diam A  \seq \diam B$}
\rlab{\irimp}
\uinf{$\Box(A \imp B) \seq \diam A \imp \diam B$}
\rlab{\irimp}
\uinf{$\seq \Box(A \imp B) \imp (\diam A \imp \diam B)$}
\disp
\end{tabular}

\vspace{0.3cm}
\ax{$\bot\seq\bot$}
\rlab{\irimp}
\uinf{$\seq \bot\imp\bot$}
\rlab{\ruleiNbox}
\uinf{$\seq\Box(\bot\imp\bot)$}
\disp
\ \ 
\ax{$\bot \seq$}
\rlab{\ruleiNdiam}
\uinf{$\diam\bot\seq\bot$}
\rlab{\irimp}
\uinf{$\seq\diam\bot\imp\bot$}
\disp
\ \
\ax{$\Box A, A \seq A$}
\rlab{\ruleiTbox}
\uinf{$\Box A \seq A$}
\rlab{\irimp}
\uinf{$\seq \Box A \imp A$}
\disp
\ \
\ax{$A \seq A$}
\rlab{\ruleiTdiam}
\uinf{$A \seq \diam A$}
\rlab{\irimp}
\uinf{$\seq A \imp \diam A$}
\disp

\vspace{0.3cm}
\ax{$A \seq A$}
\rlab{\ruleiD}
\uinf{$\Box A \seq \diam A$}
\rlab{\irimp}
\uinf{$\seq \Box A \imp \diam A$}
\disp
\quad
\ax{$\bot \seq$}
\rlab{\ruleiPbox}
\uinf{$\Box\bot\seq\bot$}
\rlab{\irimp}
\uinf{$\seq\Box\bot\imp\bot$}
\disp
\quad
\ax{$\bot\seq\bot$}
\rlab{\irimp}
\uinf{$\seq \bot\imp\bot$}
\rlab{\ruleiPdiam}
\uinf{$\seq\diam(\bot\imp\bot)$}
\disp
\end{small}
\caption{\label{fig:derivations} Derivations of the modal axioms.}
\end{figure}

In the following, we denote with $\seqWstar$ any constructive calculus defined above.
As usual, we say that a rule 
is \emph{admissible} in $\seqWstar$ if whenever the premisses are derivable, the conclusion is
also derivable, and that
a single-premiss rule
is \emph{height-preserving admissible}
if whenever the premiss is derivable, 
the conclusion is derivable with
a derivation of at most the same height.
We now prove that the calculi $\seqWstar$ enjoy admissibility of structural rules and cut,
then we present equivalent axiomatic systems. 


\begin{proposition}[Admissibility of structural rules]\label{prop:adm str rules}
The following rules are height-preserving admissible in $\seqWstar$:
\begin{center}
\ax{$\G \seq \D$}
\llab{\ilwk}
\uinf{$\G, A \seq \D$}
\disp
\qquad
\ax{$\G \seq$}
\llab{\irwk}
\uinf{$\G \seq A$}
\disp
\qquad
\ax{$\G, A, A\seq \D$}
\llab{\ilctr}
\uinf{$\G, A \seq \D$}
\disp.
\end{center}
\end{proposition}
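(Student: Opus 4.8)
The plan is the standard route for G3-style calculi: first prove height-preserving admissibility of weakening, then height-preserving invertibility of the propositional rules, and finally height-preserving admissibility of left contraction, each by induction on the height of the given derivation (admissibility of cut is then a separate argument). For \ilwk\ and \irwk\ I would argue by simultaneous induction on the height $h$ of the derivation of the premiss. If the premiss is an instance of \iinit\ or \ilbot, then so is the weakened sequent, since the added formula is neither the displayed atom of \iinit\ nor $\bot$. In the inductive step, let $R$ be the last rule. Every rule of $\seqWstar$ has a free schematic antecedent context in its conclusion, so for \ilwk\ one places the new formula there and, for the rules that copy the context (possibly after boxing) into a premiss --- such as \irimp, \illand, \ruleiKdiam, \ruleidualandC\ --- applies the induction hypothesis to the relevant premiss, preserving the height. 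For \irwk\ the conclusion has empty succedent, hence $R$ is either \ilbot\ or a rule whose conclusion succedent is the schematic $\D$; one re-reads the instance of $R$ with $\D := A$, invoking the induction hypothesis on the premisses whose succedent is the now-empty schematic $\D$ (the premiss of \illand, the right premiss of \ilimp, etc.), while premisses with a fixed succedent such as $A, B \seq$ in \ruleidualandM\ are left untouched.

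Next I would prove \hp\ invertibility of the propositional rules; the instances used in the contraction proof are those for \illand, \illor\ and \ilimp. The induction on height is routine: axioms remain axioms after decomposition; when the compound formula is itself principal the premiss(es) already have the required shape; otherwise one applies the induction hypothesis to the premisses and reapplies $R$, using \hp\ weakening where a premiss must be extended on the left (e.g.\ under \ilimp\ and \irimp). No modal rule has a conjunction or a disjunction as principal formula, so the modal rules are harmless at this stage.

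Left contraction \ilctr\ is then proved by induction on the height $h$ of the derivation of $\G, A, A \seq \D$. If that sequent is an axiom, so is $\G, A \seq \D$. If no occurrence of $A$ is principal in the last rule $R$, both occurrences lie in the schematic contexts of $R$: if they sit in the free conclusion-context they are merged directly; if (possibly after boxing) they are copied into a premiss, one applies the induction hypothesis there and reapplies $R$ --- the height is preserved either way. If one occurrence of $A$ is principal and $R$ is propositional, then $A$ is $B\land C$, $B\lor C$ or $B\imp C$: in the first two cases one inverts $R$ on the non-principal occurrence, contracts the resulting duplicates of $B$ and $C$ by the induction hypothesis, and reapplies $R$; in the case $A = B\imp C$ the first premiss of \ilimp\ still carries both copies of $B\imp C$ (so the induction hypothesis applies to it), while to the second premiss, which carries one copy of $B\imp C$ alongside $C$, one applies \hp\ invertibility of \ilimp\ to trade that copy for a second $C$ and then contracts $C$ by the induction hypothesis, after which reapplying \ilimp\ concludes. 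If $R$ is a modal rule, then $A$ is a $\Box$- or $\diam$-formula and the second occurrence again lies in a context of $R$ --- a free one, reassigned directly, or a boxed one, handled by the induction hypothesis on the un-boxed premiss --- with one genuinely special case: when the two principal $\Box$-formulas of \ruleiDbox\ coincide, i.e.\ $A = \Box B$ and the premiss is $B, B \seq$, one contracts $B$ in the premiss by the induction hypothesis and concludes by \ruleiPbox, which is present in every calculus containing \ruleiDbox.

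The step I expect to be the main obstacle is the modal part of the contraction proof: it requires a rule-by-rule check that contracting a formula occurring inside a $\Box$-context (as in \ruleiCbox, \ruleiKdiam, \ruleiCD, \ruleidualandC) always reduces to a contraction in the un-boxed premiss covered by the induction hypothesis, and that in the few configurations where two principal formulas collapse the calculus always offers a unary ``fallback'' rule (here \ruleiPbox) yielding the contracted conclusion --- the latter being the constructive counterpart of the reason why \rulePbox\ and \rulePdiam\ are added to the classical calculi $\seqEMD$ and $\seqEMND$. Everything else is routine manipulation of contexts and heights.
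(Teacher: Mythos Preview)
Your proposal is correct and follows essentially the same route as the paper: induction on derivation height, with the modal cases handled by pushing the contraction into the (un-boxed) premiss and reapplying the rule, and the \ruleiDbox\ collision resolved via \ruleiPbox\ exactly as the paper does. The only difference is expository: you spell out the \hp\ invertibility of the propositional left rules as an explicit intermediate step for the \ilimp/\illand/\illor\ cases of contraction, whereas the paper treats the propositional cases as standard and displays only the modal ones.
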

\begin{proof}
Height-preserving admissibility of \ilwk, \irwk, and \ilctr\
is proved by induction on the height of the derivation of their premiss,
taking into account the last rule applied in the derivation.
For \ilwk\ and \irwk\ the proof is straightforward, we consider
some examples for \ilctr\ involving the modal rules.
The derivations on the left are converted into the derivations on the right,
which include applications of \ilctr\ which are height-preserving admissible by \ih:
\begin{center}
\begin{small}
\begin{tabular}{ccc}
\vspace{0.2cm}
\ax{$A, A \seq$}
\llab{\ruleiDbox}
\uinf{$\G, \Box A, \Box A \seq \D$}
\disp
& $\leadsto$ &
\ax{$A, A \seq$}
\rlab{\ilctr}
\uinf{$A \seq$}
\rlab{\ruleiPbox}
\uinf{$\G, \Box A \seq \D$}
\disp
\\

\vspace{0.2cm}
\ax{$\G, \Box A, \Box A, A \seq \D$}
\llab{\ruleiTbox}
\uinf{$\G, \Box A, \Box A \seq \D$}
\disp
& $\leadsto$ &
\ax{$\G, \Box A, \Box A, A \seq \D$}
\rlab{\ilctr}
\uinf{$\G, \Box A, A \seq \D$}
\rlab{\ruleiTbox}
\uinf{$\G, \Box A \seq \D$}
\disp
\\

\vspace{0.2cm}
\ax{$\G, A, A, B \seq C$}
\llab{\ruleiCdiam}
\uinf{$\G', \Box\G, \Box A, \Box A, \diam B \seq \diam C$}
\disp
& $\leadsto$ &
\ax{$\G, A, A, B \seq C$}
\rlab{\ilctr}
\uinf{$\G, A, B \seq C$}
\rlab{\ruleiCdiam}
\uinf{$\G', \Box\G, \Box A, \diam B \seq \diam C$}
\disp
\\

\ax{$\G, A, B \seq C$}
\llab{\ruleiCdiam}
\uinf{$\G', \Box A, \Box\G, \Box A, \diam B \seq \diam C$}
\disp
& $\leadsto$ &
\ax{$\G, A, B \seq C$}
\rlab{\ruleiCdiam}
\uinf{$\G', \Box\G, \Box A, \diam B \seq \diam C$}
\disp
\end{tabular}
\end{small}
\end{center}

In the last example, 
one occurrence of $\Box A$ is introduced by the first application of \ruleiCdiam\ as part of the side context,
while this is not the case in the second application of \ruleiCdiam.
\end{proof}

\begin{theorem}[Cut admissibility]\label{th:cut elim}
The following cut rule is admissible in  $\seqWstar$:
\begin{center}
\ax{$\G \seq A$}
\ax{$\G', A \seq \D$}
\llab{\cut}
\binf{$\G, \G' \seq \D$}
\disp.
\end{center}
\end{theorem}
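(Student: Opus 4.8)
The plan is to adapt the standard Gentzen--Takeuti argument for cut elimination in G3-style calculi to the present single-succedent modal setting. I would prove admissibility of the cut rule by a double induction: the \emph{outer} induction is on the complexity of the cut formula $A$, and the \emph{inner} induction is on the sum of the heights of the derivations of the two premisses $\G\seq A$ and $\G', A\seq\D$. As usual I would first treat the cases where at least one premiss is an axiom ($\iinit$ or $\ilbot$), where the cut is either eliminated outright or replaced by one of the weakening rules (height-preserving admissible by Proposition~\ref{prop:adm str rules}). Then come the cases where the cut formula is not principal in the last rule of the left premiss, or not principal in the last rule of the right premiss: in each such case one permutes the cut upward past that rule, appealing to the inner induction hypothesis on smaller height-sum, and possibly to height-preserving admissibility of weakening to restore side contexts. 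The genuinely new work is concentrated in the \emph{principal cases}, where $A$ is principal on both sides simultaneously.

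For the propositional connectives the principal cases are entirely standard (reducing a cut on $B\land C$, $B\lor C$, or $B\imp C$ to cuts on the immediate subformulas, using the inner IH to handle the extra bookkeeping introduced by the contraction-absorbing form of $\ilimp$). The modal principal cases are where I expect the real obstacle. Here the cut formula is $\Box B$ or $\diam B$, introduced on the left by one of $\ruleiMbox/\ruleiCbox/\ruleiKbox/\ruleiMdiam/\ruleiCdiam/\ruleiKdiam/\ruleiTbox/\ruleiTdiam/\ldots$ and on the right by a matching right-introduction rule; since $\Box$ and $\diam$ are not interdefinable, each of the calculi $\seqWstar$ must be checked by going through the finitely many compatible pairs of rules that appear in that particular calculus. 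In the paradigmatic case --- say the left premiss ends in $\ruleiKbox$ deriving $\Box\G_0\seq\Box B$ from $\G_0\seq B$, and the right premiss ends in some rule with $\Box B$ principal in the antecedent, e.g.\ $\ruleiKdiam$ deriving $\Box\G_1,\Box B,\diam C\seq\diam D$ from $\G_1, B, C\seq D$ --- one first cuts $\G_0\seq B$ against $\G_1, B, C\seq D$ on the \emph{smaller} formula $B$ (outer IH) to obtain $\G_0,\G_1, C\seq D$, and then re-applies the modal rule ($\ruleiKdiam$) to recover $\Box\G_0,\Box\G_1,\diam C\seq\diam D$, which is the desired conclusion up to weakening of side contexts. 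The $\ruleidualandM$/$\ruleidualandC$/$\ruleidualandK$ rules, which have empty or at most one-formula consequent in the premiss, and the $\ruleiD$, $\ruleiDbox$, $\ruleiCD$, $\ruleiCDbox$, $\ruleiPbox$, $\ruleiPdiam$ rules require the analogous but slightly asymmetric treatment; the single-succedent restriction actually \emph{simplifies} matters here because there is no right context to carry along.

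The main obstacle, as in all such proofs, is the \emph{combined principal case involving rules with non-trivial context on the principal side} --- concretely the pairs in which $\Box B$ is principal on the left via $\ruleiKbox$/$\ruleiCbox$ (whose conclusion has an arbitrary side context $\G',\Box\G$) and $\Box B$ occurs as a boxed context formula, not as a designated principal formula, in a right-side or left-side application of $\ruleiKbox/\ruleiCbox/\ruleiKdiam/\ruleiCdiam/\ruleiD/\ruleiCD$; here one must be careful that after cutting on the subformula $B$ the boxed context of the conclusion is exactly reconstituted, possibly needing an extra cut or an appeal to admissible contraction ($\ilctr$, Proposition~\ref{prop:adm str rules}) on the antecedent side. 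For logics with $\ruleiTbox$ or $\ruleiTdiam$ there is the familiar extra wrinkle that the rule reproduces its principal formula in the premiss, so the principal cut must be permuted in a way that feeds the conclusion of a first cut back into a second cut on the same formula but at strictly smaller height-sum (the standard ``$\mathsf{T}$-trick''). I would organise the write-up by first giving the propositional and non-principal cases in full generality for $\seqWstar$, then tabulating the modal principal cases calculus by calculus, remarking that the single-succedent format removes the right-context manipulations that complicate the classical proof and so each case reduces to one of a small number of schemes exhibited in detail.
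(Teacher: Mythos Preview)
Your proposal is correct and follows essentially the same approach as the paper: a lexicographic induction on the pair (complexity of the cut formula, sum of the heights of the two derivations), with the standard case split into non-principal-left, non-principal-right, and principal-on-both-sides, deferring the propositional cases to the literature and handling the modal principal cases by cutting on the subformula and re-applying the appropriate modal rule. The paper's proof is terser and only displays a handful of representative modal reductions (including the cross-rule interactions such as $(\ruleiTdiam;\ruleiCdiam)$, where one needs repeated $\ruleiTbox$ plus weakening to turn an unboxed context back into a boxed one, and $(\ruleiNbox;\ruleiD)$, where the reduction relies on $\ruleiPdiam$ being present in the calculus), but your plan to tabulate the modal principal cases calculus by calculus would uncover exactly these points.
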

\begin{proof}
By induction on lexicographically ordered pairs ($c$, $h$),
where $c$ is the complexity of the cut formula
(\ie, the number of binary
connectives or modalities occurring in it),
and $h = h_1+h_2$, called cut height,
is the sum of the heights of the derivations of the premisses of \cut. 
As usual, 
we distinguish some cases according to whether the cut formula is or not principal
in the last rules applied in the derivation of the premisses of \cut. 
For the cases where the last rules applied in the derivation of the premisses of \cut\
are propositional we refer to \cite[Ch.~4]{Troelstra:2000}.
Here we only show a few most relevant cases
involving modal rules, the other cases are similar.

\begin{itemize}
\item[(i)]
The cut formula is not principal in the last rule application in the derivation of the left premiss of \cut.
We consider the following two examples,
where the derivation on the left is converted into the derivation on the right:
\begin{center}
\begin{small}
\ax{$A, B \seq$}
\llab{\ruleidualandM}
\uinf{$\G, \Box A, \diam B \seq C$}
\ax{$\G', C \seq \D$}
\llab{\cut}
\binf{$\G, \G', \Box A, \diam B \seq \D$}
\disp
\ \ $\leadsto$ \ \
\ax{$A, B \seq$}
\rlab{\ruleidualandM}
\uinf{$\G, \G', \Box A, \diam B \seq \D$}
\disp

\vspace{0.2cm}
\ax{$\G, \Box A, A \seq B$}
\llab{\ruleiTbox}
\uinf{$\G, \Box A \seq B$}
\ax{$\G', B \seq \D$}
\llab{\cut}
\binf{$\G, \G', \Box A \seq \D$}
\disp
\ \ $\leadsto$ \ \
\ax{$\G, \Box A, A \seq B$}
\ax{$\G', B \seq \D$}
\rlab{\cut}
\binf{$\G, \G', \Box A, A \seq \D$}
\rlab{\ruleiTbox}
\uinf{$\G, \G', \Box A \seq \D$}
\disp
\end{small}
\end{center}

\item[(ii)]
The cut formula is not principal in the last rule application in the derivation of the right premiss of \cut.
We consider the following example:
\begin{center}
\begin{small}
\ax{$\G \seq A$}
\ax{$\G'', B \seq C$}
\rlab{\ruleiCdiam}
\uinf{$\G', A, \Box \G'', \diam B \seq \diam C$}
\llab{\cut}
\binf{$\G, \G', \Box \G'', \diam B \seq \diam C$}
\disp
\ \ $\leadsto$ \ \
\ax{$\G'', B \seq C$}
\rlab{\ruleiCdiam}
\uinf{$\G, \G', \Box \G'', \diam B \seq \diam C$}
\disp
\end{small}
\end{center}

\item[(iii)]
The cut formula is principal in the last rule application in the derivations of both premisses of \cut.
We consider the following three examples,
where $R^*$ denotes multiple applications of the rule $R$:

\begin{small}
\begin{center}
(\ruleiCdiam; \ruleidualandC) \hfill \quad

\ax{$\G, A \seq B$}
\llab{\ruleiCdiam}
\uinf{$\G', \Box\G, \diam A \seq \diam B$}
\ax{$\G'', C, B \seq$}
\rlab{\ruleidualandC}
\uinf{$\G''', \Box\G'', \Box C, \diam B \seq \D$}
\rlab{\cut}
\binf{$\G', \G''', \Box\G, \Box\G'', \Box C, \diam A \seq \D$}
\disp

\vspace{0.15cm}
$\vleadsto$

\vspace{0.15cm}
\ax{$\G, A \seq B$}
\ax{$\G'', C, B \seq$}
\rlab{\cut}
\binf{$\G, \G'', C, A \seq$}
\rlab{\ruleidualandC}
\uinf{$\G', \G''', \Box\G, \Box\G'', \Box C, \diam A \seq \D$}
\disp

\vspace{0.2cm}
(\ruleiTdiam; \ruleiCdiam) \hfill \quad

\ax{$\G \seq A$}
\llab{\ruleiTdiam}
\uinf{$\G \seq \diam A$}
\ax{$\G', A \seq B$}
\rlab{\ruleiCdiam}
\uinf{$\G'', \Box \G', \diam A \seq \diam B$}
\rlab{\cut}
\binf{$\G, \G'', \Box \G' \seq \diam B$}
\disp
\ \ $\leadsto$ \ \
\ax{$\G \seq A$}
\ax{$\G', A \seq B$}
\rlab{\cut}
\binf{$\G, \G' \seq B$}
\rlab{\ruleiTdiam}
\uinf{$\G, \G' \seq \diam B$}
\rlab{\ruleiTbox$^*$}
\uinf{$\G, \Box\G' \seq \diam B$}
\rlab{\ilwk$^*$}
\uinf{$\G, \G'', \Box\G' \seq \diam B$}
\disp

(\ruleiNbox; \ruleiD) \hfill \quad

\ax{$\seq A$}
\llab{\ruleiNbox}
\uinf{$\G \seq \Box A$}
\ax{$A \seq B$}
\rlab{\ruleiD}
\uinf{$\G', \Box A \seq \diam B$}
\rlab{\cut}
\binf{$\G, \G' \seq \diam B$}
\disp
\ \ $\leadsto$ \ \
\ax{$\seq A$}
\ax{$A \seq B$}
\rlab{\cut}
\binf{$\seq B$}
\rlab{\ruleiPdiam}
\uinf{$\G, \G' \seq \diam B$}
\disp
\end{center}
\end{small}
\end{itemize}
\end{proof}

\subsection{Axiom systems}
For each \const\ calculus $\seqWL$, we now define an equivalent axiomatic system. 
The logics $\WLL$ are defined in the language $\lan$ extending 
(any axiomatisation of) $\IPL$
with the following modal axioms and rules from Fig.~\ref{fig:axioms}:
\begin{center}
\begin{tabular}{lllll}
$\WM$ := \axdualand\ + \monbox\ + \mondiam &&  $\WMND$ := $\WMN$ + \axD  \\
$\WMN$ := $\WM$ + \axNbox && $\WMCD$ := $\WMC$ + \axD\ + \axPdiam \\
$\WMC$ := $\WM$ + \axCbox\ + \axKdiam && $\WKD$ := $\WK$ + \axD   \\
$\WK$ := $\WMC$ + \axNbox &&  $\WMT$ := $\WM$ + \axTbox\ + \axTdiam \\
$\WMP$ := $\WM$ + \axPdiam &&  $\WMNT$ := $\WMN$ + \axTbox\ + \axTdiam \\  
$\WMNP$ := $\WMN$ + \axPdiam &&  $\WMCT$ := $\WMC$ + \axTbox\ + \axTdiam \\ 
$\WMD$ := $\WM$ + \axD\ + \axPdiam && $\WKT$ := $\WK$ + \axTbox\ + \axTdiam \\
\end{tabular}
\end{center}

%
%
%
%
%
%

\begin{figure}
\centering
\begin{footnotesize}
\begin{tikzpicture}
%
%
%
%
%
%
    \node (M) at  (0,0)  {$\WM$};
    \node (MN) at (0.5, 1) {$\WMN$};
    \node  (MC) at (1, -0.5) {$\WMC$};
    \node (K) at (1.5, 0.5) {$\WK$};

    \node (MP) at  (3.,0)  {$\WMP$};
    \node (MNP) at (3.5, 1) {$\WMNP$};

    \node (MD) at  (5.2,0)  {$\WMD$};
    \node (MND) at (5.7, 1) {$\WMND$};
    \node  (MCD) at (6.2, -0.5) {$\WMCD$};
    \node (KD) at (6.7, 0.5) {$\WKD$};

    \node (MT) at  (8.3,0)  {$\WMT$};
    \node (MNT) at (8.8, 1) {$\WMNT$};
    \node  (MCT) at (9.3, -0.5) {$\WMCT$};
    \node (KT) at (9.8, 0.5) {$\WKT$};

	\draw[->] (M) -- (MN);
	\draw[->] (M) -- (MC);
	\draw[->, dashed] (MN) -- (K);	
	\draw[->, dashed] (MC) -- (K);
	\draw[->] (MP) -- (MNP);
	\draw[->] (MD) -- (MND);
	\draw[->] (MD) -- (MCD);
	\draw[->, dashed] (MND) -- (KD);	
	\draw[->, dashed] (MCD) -- (KD);
	\draw[->] (MT) -- (MNT);
	\draw[->] (MT) -- (MCT);
	\draw[->] (MNT) -- (KT);	
	\draw[->] (MCT) -- (KT);

	\draw[->] (MN) -- (MNP);
	\draw[->] (MNP) -- (MND);
	\draw[->] (MND) -- (MNT);
	\draw[->] (MC) -- (MCD);
	\draw[->] (MCD) -- (MCT);
	\draw[->] (MP) -- (MD);

	\draw[->] (M) -- (MP);
	\draw[->] (MD) -- (MT);
	\draw[->, dashed] (KD) -- (KT);
	\draw[->, dashed] (K) -- (KD);
\end{tikzpicture}
\end{footnotesize}
\caption{\label{fig:dyag const} Dyagram of constructive modal logics.}
\end{figure}
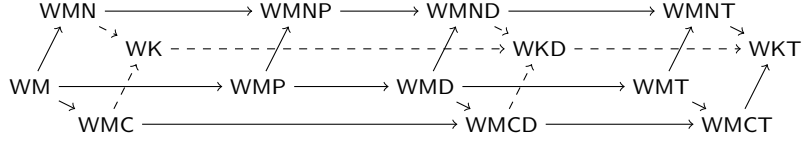

In the following, we will refer to these systems as \emph{W-logics}.
Moreover, we denote $\WL$ any W-logic, and we denote $\WCstar$, resp.~$\WDstar$, resp.~$\WTstar$
any W-logic with axioms \axCbox\ and \axKdiam,
resp.~with axiom \axD, resp.~with axioms \axTbox\ and \axTdiam.
As usual, 
we say that $A$ is a theorem of $\WL$, written $\WL\vd A$, if there is a finite sequence of formulas 
ending with $A$ in which every formula is an axiom of $\WL$, or it is obtained from previous formulas by 
the application 
of a rule
of $\WL$.
Moreover,
we say that $A$ is deducible in $\WL$ from a set of formulas $\Sigma$, 
written
$\Sigma\vd_{\WL} A$, if there is a finite set $\{B_1, ..., B_n\}\subseteq\Sigma$ such that 
$\vd_{\WL} B_1\land ... \land B_n \imp A$.
Furthermore, given two axiomatic systems $\mathsf{L_1}$ and  $\mathsf{L_2}$,
we say that  $\mathsf{L_1}$ is included in  $\mathsf{L_2}$ if  $\mathsf{L_1}\vd A$ entails
 $\mathsf{L_2}\vd A$ for all $A\in\lan$,
 and that $\mathsf{L_1}$ and  $\mathsf{L_2}$ are equivalent if they derive exactly the same theorems.

Concerning W-logics specifically, note
%
that \axCdiam\ is not an axiom of $\WMC$ because it is not derivable in $\seqWMC$,
\axCdiam\ must be replaced by \axKdiam\ which is instead derivable in the calculus.
Note also that \axPdiam\ must be included in the axiomatisation of $\WMD$ and $\WMCD$
as it is not derivable from \axD\ in the intuitionistic systems.
The relations among the W-logics are displayed in Fig.~\ref{fig:dyag const}.

We prove some basic results about W-logics.

\begin{proposition}\label{prop:basic res w}
$(i)$ $\WM \vd \textup{\Rdualand}$.
$(ii)$ $\WMN \vd \textup{\nec}$.
$(iii)$ $\WMN \vd \textup{\axNdiam}$.
$(iv)$ $\WMC \vd \textup{\axKbox}$.
$(v)$ $\WMP \vd \textup{\axPbox}$.
$(vi)$ $\WMND \vd \textup{\axPdiam}$.
\end{proposition}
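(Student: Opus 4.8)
The plan is to verify each of the six claims by an explicit derivation in the corresponding Hilbert system, exploiting that every W-logic extends $\IPL$ together with the monotonicity rules $\monbox$, $\mondiam$ and (at least) the duality axiom $\axdualand$. The recurring recipe is: reduce a modal assertion to a propositional implication between the formulas occurring under the modalities, propagate that implication through $\monbox$ or $\mondiam$, and then combine the result with the ambient modal axioms by pure $\IPL$-reasoning. I will use freely that from $\mathsf{L}\vd A$ one obtains $\mathsf{L}\vd\top\imp A$, and the $\IPL$-equivalences $\neg\bot\leftrightarrow\top$ (indeed $\neg\bot$ is literally $\top$ in the present notation) and $\neg\top\leftrightarrow\bot$, which under $\diam$ — applying $\mondiam$ in both directions — give $\diam\neg\top\leftrightarrow\diam\bot$ and $\diam\neg\bot\leftrightarrow\diam\top$.

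For $(i)$: contrapose the hypothesis, passing from $\neg(A\land B)$ to its $\IPL$-consequence $B\imp\neg A$; apply $\mondiam$ to obtain $\diam B\imp\diam\neg A$; hence $\Box A\land\diam B\imp\Box A\land\diam\neg A$, and the instance $\neg(\Box A\land\diam\neg A)$ of $\axdualand$ closes this to $\bot$, that is, $\neg(\Box A\land\diam B)$. For $(ii)$: from $\WMN\vd A$ we get $\WMN\vd\top\imp A$, so $\monbox$ yields $\Box\top\imp\Box A$, and modus ponens with the axiom $\axNbox$ (i.e.\ $\Box\top$) gives $\Box A$. Items $(iii)$ and $(v)$ are dual instances of $\axdualand$ followed by the simplifications above: with the schema variable set to $\top$ one gets $\neg(\Box\top\land\diam\neg\top)$, hence $\neg(\Box\top\land\diam\bot)$, and since $\Box\top$ is available in $\WMN$ this collapses to $\neg\diam\bot$; with the variable set to $\bot$ one gets $\neg(\Box\bot\land\diam\top)$ (using $\diam\neg\bot = \diam\top$), and since $\diam\top$ is the axiom $\axPdiam$ of $\WMP$ this collapses to $\neg\Box\bot$.

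For $(iv)$: from the $\IPL$-theorem $((A\imp B)\land A)\imp B$, apply $\monbox$ to obtain $\Box((A\imp B)\land A)\imp\Box B$, prefix the instance $\Box(A\imp B)\land\Box A\imp\Box((A\imp B)\land A)$ of $\axCbox$, and observe that the resulting $\Box(A\imp B)\land\Box A\imp\Box B$ is $\IPL$-equivalent to $\axKbox$. For $(vi)$: the instance $\Box\top\imp\diam\top$ of $\axD$, together with $\Box\top$ (available in $\WMN$, hence in $\WMND$), gives $\diam\top$ by modus ponens. None of these steps is a genuine obstacle; the only point that calls for a moment's care is $(i)$, where one must first turn $\neg(A\land B)$ into the implication $B\imp\neg A$ so that $\axdualand$ — which speaks of $\diam\neg A$ rather than of $\diam B$ — becomes applicable after a single use of $\mondiam$. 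Everything else is modus ponens, the two monotonicity rules, and routine intuitionistic bookkeeping.
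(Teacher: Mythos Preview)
Your proof is correct and follows essentially the same approach as the paper. The only cosmetic difference is in item $(iii)$: the paper invokes the derived rule \Rdualand\ from item $(i)$ (with $A=\top$, $B=\bot$) to get $\Box\top\imp\neg\diam\bot$ directly, whereas you instantiate \axdualand\ at $A=\top$ and then use \mondiam\ to replace $\diam\neg\top$ by $\diam\bot$; both amount to the same reasoning.
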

\begin{proof}
(i) From $\neg(A \land B)$, by $\IPL$ we obtain $B \imp \neg A$, then by \mondiam, $\diam B \imp \diam \neg A$,
thus $\neg \diam \neg A \imp \neg \diam B$.
Moreover from \axdualand\ we have $\Box A \imp \neg \diam \neg A$, 
thus $\Box A \imp \neg \diam B$, therefore $\neg(\Box A \land \diam B)$.
(ii) From $A$, by $\IPL$ we have $\top\imp A$, then by \monbox, $\Box\top\imp\Box A$,
then by \axNbox, $\Box A$.
%
(iii) By \Rdualand, $\Box\top\imp\neg\diam\bot$,
then by \axNbox,  $\neg\diam\bot$.
(iv) By \axCbox, $\Box(A \imp B) \land \Box A \imp \Box((A\imp B) \land A)$,
and by \monbox, $\Box((A\imp B) \land A) \imp \Box B$,
thus by $\IPL$,  $\Box(A \imp B) \imp (\Box A \imp \Box B)$.
(v) By \axdualand, $\diam\top\imp \neg\Box\bot$, then by \axPdiam, $\neg\Box\bot$.
(vi) By \axD, $\Box\top\imp \diam\top$, then by \axNbox, $\diam\top$.
\end{proof}

We recall that \wij's original axiomatisation of $\WK$ \cite{wij} was given by
$\IPL + \textup{\nec} + \textup{\axKbox} +  \textup{\axKdiam} + \textup{\axNdiam}$.
It is easy to verify that \axdualand, \monbox, \mondiam, \axCbox, and \axNbox\ are all derivable
in \wij's original system. 
Then from Proposition~\ref{prop:basic res w}
it follows in particular that the axiomatisation of $\WK$ 
considered here
is equivalent to \wij's one.

We now prove that the systems $\WL$ are equivalent to the corresponding calculi.


\begin{theorem}\label{th:eq seq ax}
$\seqWstar \vd \G \seq \D$ if and only if $\WL \vd \AND\G \imp \OR\D$. 
\end{theorem}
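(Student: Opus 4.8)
The plan is to prove the two implications separately, each by a routine induction, the modal content being carried entirely by Proposition~\ref{prop:basic res w} and, for one direction, by cut admissibility (Theorem~\ref{th:cut elim}); throughout, $\seqWstar$ and $\WL$ denote the calculus and the axiomatic system bearing the same name. \textbf{From $\seqWstar$ to $\WL$:} assuming $\seqWstar \vd \G \seq \D$, I would show $\WL \vd \AND\G \imp \OR\D$ by induction on the height of a derivation, inspecting the last rule. The initial sequents \iinit, \ilbot\ and the propositional rules translate into $\IPL$-theorems, so those cases are immediate. For each modal rule one checks that the translated conclusion follows in $\WL$ from the translated premiss(es), using only the modal axioms present in $\WL$ and the derived principles of Proposition~\ref{prop:basic res w}; e.g.~\ruleiMbox, \ruleiMdiam\ are discharged by \monbox, \mondiam; \ruleidualandM, \ruleidualandC, \ruleidualandK\ by \Rdualand\ (Proposition~\ref{prop:basic res w}(i)); \ruleiNbox, \ruleiNdiam\ by \nec\ and \axNdiam\ (Proposition~\ref{prop:basic res w}(ii)--(iii)); \ruleiKbox, \ruleiCbox\ by \nec, \axKbox\ (Proposition~\ref{prop:basic res w}(iv)) and \axCbox; \ruleiKdiam, \ruleiCdiam\ by \axKbox, \axKdiam, \axCbox; \ruleiTbox, \ruleiTdiam\ by \axTbox, \axTdiam; \ruleiPbox, \ruleiPdiam\ by \axPbox\ (Proposition~\ref{prop:basic res w}(v)) and \axPdiam; \ruleiD, \ruleiDbox\ by \axD\ and \axPdiam; and \ruleiCD, \ruleiCDbox\ by \axCbox, \axD\ and \axPdiam. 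The side contexts $\G'$ and $\Box\G$ added to the conclusions of the box/diamond rules are absorbed by $\IPL$-monotonicity. The one subcase needing slightly more work is that several rules (\ruleiCbox, \ruleiCdiam, \ruleidualandC, \ruleiCD, \ruleiKdiam, \ruleidualandK) carry an arbitrary multiset $\G = C_1, \dots, C_k$ as principal part: here a short secondary induction on $k$ first uses \axCbox\ to collapse $\Box C_1 \land \dots \land \Box C_k$ into $\Box(C_1 \land \dots \land C_k)$, after which the reasoning proceeds as in the one-formula case.

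\textbf{From $\WL$ to $\seqWstar$:} assuming $\WL \vd \AND\G \imp \OR\D$, I would first prove the special case that $\WL \vd A$ implies $\seqWstar \vd \seq A$, by induction on a Hilbert derivation of $A$. The propositional axioms are derivable since $\seqWstar$ contains the propositional calculus $\gtreip$ (complete for $\IPL$), and each modal axiom of $\WL$ is derived in $\seqWstar$ exactly by the corresponding derivation in Figure~\ref{fig:derivations}. For the rules of $\WL$: modus ponens is simulated using cut admissibility (Theorem~\ref{th:cut elim}) --- from $\seqWstar \vd \seq A$ and $\seqWstar \vd \seq A \imp B$, two cuts against the easily derivable sequent $A, A \imp B \seq B$ give $\seqWstar \vd \seq B$; and \monbox, \mondiam\ are simulated by first cutting $\seq A \imp B$ against $A, A \imp B \seq B$ to obtain $A \seq B$, then applying the box- (resp.~diamond-) monotonicity rule that $\seqWstar$ contains --- one of \ruleiMbox, \ruleiCbox, \ruleiKbox\ (resp.~\ruleiMdiam, \ruleiCdiam, \ruleiKdiam), each of which with empty context takes $A \seq B$ to $\Box A \seq \Box B$ (resp.~$\diam A \seq \diam B$) --- followed by \irimp. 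This yields $\seqWstar \vd \seq \AND\G \imp \OR\D$, and to descend to $\seqWstar \vd \G \seq \D$ I would apply a few more cuts, against the easily derivable sequents $\G \seq \AND\G$, $\AND\G \imp \OR\D, \AND\G \seq \OR\D$ and $\OR\D \seq \D$ (when $\D = \emptyset$ the last is the instance $\bot \seq$ of \ilbot, reading $\OR\D$ as $\bot$; this case in fact never occurs, by consistency of $\WL$).

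\textbf{The main obstacle.} There is no conceptually hard step once Theorem~\ref{th:cut elim} and Proposition~\ref{prop:basic res w} are available; the work is bookkeeping. The two places that demand attention are: in the $\seqWstar$-to-$\WL$ direction, verifying that each modal rule of $\seqWstar$ is justified using only axioms actually present in the matching system $\WL$ --- which is precisely why Proposition~\ref{prop:basic res w} is needed, e.g.~to supply \axKbox\ in the $\WMC$-family, \axNdiam\ in the $\WMN$-family, and \axPbox\ in the $\WMP$-family; and the uniform treatment, via the secondary induction on the size of the context, of the rules that carry an arbitrary multiset in their principal part.
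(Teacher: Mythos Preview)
Your proposal is correct and follows essentially the same approach as the paper: both directions are routine inductions on derivations, with cut admissibility (Theorem~\ref{th:cut elim}) simulating modus ponens and \monbox/\mondiam\ in the $\WL$-to-$\seqWstar$ direction, and Proposition~\ref{prop:basic res w} supplying the derived modal principles in the other direction; the paper merely spells out a handful of representative modal rules (\ruleiCbox, \ruleiCdiam, \ruleidualandC, \ruleiCD, \ruleiCDbox) rather than listing all of them as you do. One small slip: your parenthetical remark that the case $\D = \emptyset$ ``in fact never occurs, by consistency of $\WL$'' is false (e.g.\ $\WL \vd (p \land \neg p) \imp \bot$), but since your cut against $\bot \seq$ already handles this case correctly, the remark is harmless.
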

\begin{proof}
From right to left, it is easy to see that all modal axioms are derivable in the corresponding calculi
(cf.~derivations in Fig.~\ref{fig:derivations}),
observing that initial sequents \init\ can be generalised as usual to arbitrary formulas $A$.
For \monbox, from $\seq A \imp B$ and $A \imp B, A \seq B$, by \cut\ 
(which has been proved admissible)
we obtain $A \seq B$,
then by \ruleiMbox, $\Box A \seq \Box B$, and by \irimp, $\seq\Box A \imp \Box B$.
\mondiam\ is derived similarly.
The derivations of the intuitionistic axioms are standard, moreover modus ponens is simulated by \cut\
in the usual way.

For the other direction, 
we can consider standard derivations of the propositional 
 rules.
Here we show that 
for every modal sequent rule 
of $\seqWstar$
with premiss $\G \seq \D$ and conclusion $\G'\seq\D'$,
the 
Hilbert-style
rule
$\AND\G \imp \OR\D$ / $\AND\G' \imp \OR\D'$
is derivable 
in the corresponding system $\WL$. 
%
We only consider some relevant examples, 
the other derivations are similar.

\begin{enumerate}[leftmargin=*, align=left]
\item[(\ruleiCbox)]
From $\AND\G\land A \imp B$, by \monbox\ we get $\Box(\AND\G\land A) \imp \Box B$,
moreover by \axCbox, $\AND\Box\G \land \Box A \imp \Box(\AND\G\land A)$,
then $\AND\G' \land \AND\Box\G \land \Box A \imp \Box B$.

\item[(\ruleiCdiam)]
From $\AND\G\land A \imp B$, we get $\AND\G \imp (A \imp B)$, then by \monbox,
$\Box\AND\G \imp \Box(A \imp B)$.
By \axCbox\ we have $\AND\Box\G \imp \Box(A \imp B)$, then by \axKdiam,
$\AND\Box\G \imp (\diam A \imp \diam B)$,
thus $\AND\G' \land \AND\Box\G \land \diam A \imp \diam B$.

\item[(\ruleidualandC)]
From $\AND\G\land A \land B \imp \bot$, we get $\AND\G\land A \imp \neg B$, 
then by \monbox, $\Box (\AND\G\land A) \imp \Box \neg B$,
and by \axCbox, $\AND\Box\G\land \Box A \imp \Box \neg B$.
Moreover by \axdualand, $\Box\neg B \imp \neg \diam B$,
thus $\AND\G' \land \AND\Box\G\land \Box A \land \diam B \imp C$ for any $C$.


\item[(\ruleiCD)]
If $\G\not=\emptyset$, then from $\AND\G\imp B$, by \monbox\ we get $\Box\AND\G\imp \Box B$,
then by \axCbox, $\AND\Box\G\imp \Box B$, and by \axD, $\AND\Box\G\imp \diam B$,
thus $\AND\G' \land \AND\Box\G\imp \diam B$.
If $\G=\emptyset$, then from $B$ we get $\top\imp B$, then 
by \mondiam, $\diam\top\imp\diam B$, and by \axPdiam, $\diam B$,
thus $\AND\G' \imp \diam B$.

\item[(\ruleiCDbox)]
From $\AND\G\imp \bot$ we get $\AND\G''\imp\neg B$ for some
$B\in\G$ and $\G''=\G\setminus B$.
Then by \mondiam, $\diam \AND\G''\imp \diam\neg B$,
thus by \axD, $\Box \AND\G''\imp\diam\neg B$,
by \axCbox, $\AND\Box\G''\imp\diam\neg B$, 
and by \axdualand, $\AND\Box\G''\imp\neg\Box B$,
thus $\AND\G' \land \AND\Box\G \imp C$ for any $C$.
\end{enumerate}
\end{proof}

\subsection{Some properties of W-logics}

Cut-free sequent calculi are a very powerful tool for the analysis of logical systems.
In this subsection, we present some fundamental properties of W-logics
that 
easily follow from those of their sequent calculi.


We start considering the following result, which establishes
how classical and \wij-style modal logics are related
from the point of view of the axiomatic systems.

\begin{theorem}
Let $\logic$ be any classical modal logic from Sec.~\ref{sec:prel},
and $\WLL$ be the corresponding W-logic.
Then $\logic$ is equivalent to $\WLL + A \lor \neg A + \Box A \lor \diam \neg A$.
\end{theorem}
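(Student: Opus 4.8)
The plan is to prove the two inclusions separately. For the easy direction, I would show that $\WLL + A \lor \neg A + \Box A \lor \diam \neg A$ is included in $\logic$: the intuitionistic base of $\WLL$ is contained in $\CPL$, which is included in every classical $\logic$ considered; the scheme $A \lor \neg A$ is a classical tautology; $\Box A \lor \diam \neg A$ is exactly $\axdualor$, which together with $\axdualand$ (derivable in $\WM$ by Proposition~\ref{prop:basic res w}(i), hence present in every W-logic) gives $\axdual$, and $\axdual$ is classically equivalent to the Hilbert rules; finally each modal axiom of $\WLL$ is among the modal axioms/rules of $\logic$ or is classically derivable from them (using $\axdual$ to pass between $\Box$- and $\diam$-versions, e.g.\ $\axKdiam$ from $\axCbox$ in the presence of classical logic, and $\axNdiam$ from $\axNbox$). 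So every theorem of the extended W-logic is a theorem of $\logic$.

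For the harder direction I would show $\logic \vd A$ implies $\WLL + A \lor \neg A + \Box A \lor \diam \neg A \vd A$. The base intuitionistic logic plus $A \lor \neg A$ for all formulas $A$ yields $\CPL$, so all propositional reasoning is available. It then suffices to derive, inside the extended system, every modal axiom and rule that defines $\logic$ (the $\Box$-versions $\monbox$, and whichever of $\axNbox$, $\axCbox$, $\axTbox$, $\axD$, $\axPbox$ occur), together with $\axdual$ — after which all the $\diam$-versions follow classically. Now $\monbox$, $\mondiam$ are already present in $\WM$; $\axdualor$ is assumed and $\axdualand$ is derivable, so $\axdual$ is available; and the relevant one of $\axNbox$, $\axCbox$, $\axTbox$, $\axD$ is literally an axiom of the corresponding W-logic, reading off the two diagrams in Figs.~\ref{fig:cl dyag} and~\ref{fig:dyag const}. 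The only mismatches to check case by case are: $\WMC$ has $\axCbox$ (good), and $\axKdiam$, which is classically fine; for the $\axD$-systems, $\WMD$, $\WMND$, $\WMCD$, $\WKD$ already include $\axPdiam$, and $\axPbox$ follows by Proposition~\ref{prop:basic res w}(v); and $\axCdiam$, which is \emph{not} a theorem of $\seqWMC$ intuitionistically, becomes derivable once $A \lor \neg A$ is available, since classically $\diam(A\lor B)$, $\axdual$, and $\axCbox$ suffice. So every defining axiom of $\logic$ is recovered.

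Alternatively — and I think this is the cleaner route — I would argue semantically or via the already-established calculi: by Theorem~\ref{th:eq seq ax} it is enough to show $\seqlogic \vd {\seq} A$ iff $\WLL + A\lor\neg A + \axdualor \vd A$, and the left-to-right direction amounts to checking that each classical modal rule of Fig.~\ref{fig:classical sequent calculi} is derivable as a Hilbert rule once we have full classical propositional logic plus the W-logic's modal axioms plus $\axdualor$; this reuses essentially the computations already carried out in the proof of Theorem~\ref{th:eq seq ax}, just with the extra classical principles on hand to handle the multi-succedent rules $\ruledualorM$, $\ruledualorC$, $\ruleDdiam$ and the $\diam\D$ contexts that were dropped in the single-succedent restriction.

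\textbf{The main obstacle} will be the second direction, specifically recovering exactly those classical principles that were \emph{lost} in passing to single succedents — namely $\Rdualor$ (equivalently $\axdualor$) and $\axCdiam$ — and confirming that adding $A\lor\neg A$ and $\Box A\lor\diam\neg A$ suffices to restore them without overshooting (i.e.\ without collapsing distinct $\logic$'s). For $\axdualor$ this is immediate since it is one of the added schemes; for $\axCdiam$ one must verify the classical derivation $\diam(A\lor B) \to \diam A \lor \diam B$ goes through from $\axCbox$, $\axdual$, and classical logic, which it does via the contrapositive $\Box\neg A \land \Box\neg B \to \Box(\neg A\land\neg B) = \Box\neg(A\lor B)$. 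One should double-check each of the sixteen W-logics against its classical counterpart in the diagrams, but since the diagrams are isomorphic and the axiom sets differ only by the three items just discussed plus the harmless $\axKdiam$/$\axPdiam$ bookkeeping, the verification is uniform.
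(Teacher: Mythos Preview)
Your proposal is correct and follows essentially the same approach as the paper: show both inclusions, with the key step for $\logic \subseteq \WLL + A\lor\neg A + \Box A\lor\diam\neg A$ being that excluded middle restores $\CPL$ and $\Box A\lor\diam\neg A$ (together with $\axdualand$, already in $\WM$) restores $\axdual$, after which the defining axioms of $\logic$ are directly present in $\WLL$. The paper's proof is a terse two-line version of your first plan; your extra bookkeeping on $\axCdiam$, $\axPbox$, $\axKdiam$ is harmless but unnecessary since the classical systems are axiomatised with the $\Box$-versions only, and your alternative sequent-calculus route is not pursued in the paper (minor slip: there are 14 W-logics, not sixteen).
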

\begin{proof}
From left to right, it is easy to verify that all axioms of $\WLL$, as well as
$A \lor \neg A$ and $\Box A \lor \diam \neg A$, are derivable in $\logic$.
For the opposite direction, 
adding $A \lor \neg A$ one derives as usual all theorems of $\CPL$,
while adding $\Box A \lor \diam \neg A$ one derives \axdual.
\end{proof}

Note that $\WLL + A \lor \neg A + \Box A \lor \diam \neg A$ is a proper extension of
$\WLL$, as it is stated by the following proposition.

%

\begin{proposition}
For every W-logic $\WL$, $\WL \not\vd p \lor \neg p$ and $\WL \not\vd \Box p \lor \diam \neg p$.
\end{proposition}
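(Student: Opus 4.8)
The plan is to transfer the question to the sequent calculi. By Theorem~\ref{th:eq seq ax}, $\WL \vd A$ iff $\seqWstar \vd\ \seq A$, so it is enough to show that neither $\seq p \lor \neg p$ nor $\seq \Box p \lor \diam \neg p$ is derivable in $\seqWstar$. Since cut is admissible (Theorem~\ref{th:cut elim}), I can argue by root-first proof search in the cut-free calculus, at each step listing the rules that could have the current sequent as conclusion.

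The first ingredient is a small underivability lemma: the sequents $\seq p$ and $p \seq \bot$ are not derivable. Indeed $p$ is atomic and $\bot$ has no right rule, so the only rules that could conclude these sequents are \iinit\ (impossible: $p$ does not occur on the left of $\seq p$, and the succedent of $p \seq \bot$ is $\bot \ne p$) and, in $\seqWMCD$ and $\seqWKD$ only, \ruleiCDbox\ --- but backward application of \ruleiCDbox\ here forces $\Box\G = \emptyset$, hence the underivable empty sequent $\seq$ as premiss. It then follows that $\seq \neg p$, i.e.\ $\seq p \imp \bot$, is underivable as well, since its only possible last rule is \irimp, whose premiss would be $p \seq \bot$. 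Finally, a cut-free derivation of $\seq p \lor \neg p$ must end with \irlor\ (every left rule would need a modal or compound formula in the empty antecedent, modulo the harmless \ruleiCDbox\ case above), so its premiss is $\seq p$ or $\seq \neg p$, and we are done.

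For $\seq \Box p \lor \diam \neg p$ the argument is the same, one modal layer deeper. Its only possible last rule is again \irlor, reducing to $\seq \Box p$ or $\seq \diam \neg p$. In any $\seqWstar$, the right rules that can introduce $\Box p$ from an empty antecedent are exactly \ruleiKbox\ and \ruleiNbox\ (the rules \ruleiMbox\ and \ruleiCbox\ always leave a $\Box$-formula in the antecedent of their conclusion), and both have premiss $\seq p$; likewise the right rules that can introduce $\diam \neg p$ from an empty antecedent are \ruleiTdiam, \ruleiPdiam, and \ruleiCD\ (whereas \ruleiMdiam, \ruleiCdiam, \ruleiKdiam, \ruleiD\ all keep a modal formula in the antecedent), and each has premiss $\seq \neg p$. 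Since $\seq p$ and $\seq \neg p$ are underivable, so are $\seq \Box p$ and $\seq \diam \neg p$, hence so is $\seq \Box p \lor \diam \neg p$. Applying Theorem~\ref{th:eq seq ax} in both cases yields $\WL \not\vd p \lor \neg p$ and $\WL \not\vd \Box p \lor \diam \neg p$ for every W-logic.

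There is no deep obstacle; the only point requiring care is to run the rule inspection uniformly across all the calculi $\seqWstar$, checking in particular that the lists of right $\Box$- and $\diam$-rules above are complete for each system and that the single rule with possibly empty context, \ruleiCDbox, never helps, because backward application always pushes the search down to the underivable empty sequent.
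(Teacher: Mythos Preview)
Your proof is correct and follows exactly the same approach as the paper: transfer to $\seqWstar$ via Theorem~\ref{th:eq seq ax}, then argue by root-first proof search that $\seq p$, $\seq \neg p$, $\seq \Box p$, $\seq \diam\neg p$ are all underivable. The paper's own proof is terser, simply stating that ``by inspecting the rules of $\seqWstar$ it is easy to verify'' these underivabilities, whereas you actually carry out the inspection (including the careful treatment of \ruleiCDbox, which the paper leaves implicit).
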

\begin{proof}
If $\WL\vd p \lor \neg p$, then
by Theorem~\ref{th:eq seq ax}, $\seqWstar\vd \ \seq p \lor \neg p$.
The only rule of $\seqWstar$ with a consequence of the form $\seq p \lor \neg p$ is \irlor,
thus \irlor\ must be the last rule applied in the derivation, with premiss either $\seq p$ or $\seq \neg p$.
However, by inspecting the rules of $\seqWstar$ it is easy to verify that
$\seqWstar\not\vd \ \seq p$ and $\seqWstar\not\vd \ \seq \neg p$,
therefore $\WL\not\vd  p \lor \neg p$.
$\WL \not\vd \Box p \lor \diam \neg p$
is proved similarly, observing that
$\seqWstar\not\vd \ \seq \Box p$ and $\seqWstar\not\vd \ \seq \diam\neg p$.
\end{proof}

In a similar way we can 
 prove that W-logics 
satisfy the disjunction property.

\begin{proposition}[Disjunction property]
For all W-logics $\WL$ and all formulas $A$, $B$ of $\lan$,
if $\WL\vd A\lor B$, then $\WL\vd  A$ or $\WL\vd  B$.
\end{proposition}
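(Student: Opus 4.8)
The plan is to mimic the standard proof of the disjunction property for intuitionistic logic via cut-free sequent calculi, leveraging Theorem~\ref{th:eq seq ax} and Theorem~\ref{th:cut elim}. First I would suppose $\WL\vd A\lor B$. By Theorem~\ref{th:eq seq ax}, this gives $\seqWstar\vd\ \seq A\lor B$, so there is a derivation $\mathcal D$ in $\seqWstar$ of the sequent $\seq A\lor B$ (with empty antecedent and the single succedent formula $A\lor B$). Since $\seqWstar$ enjoys cut admissibility (Theorem~\ref{th:cut elim}), we may assume $\mathcal D$ is cut-free.

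The key step is a case analysis on the last rule applied in $\mathcal D$. The conclusion sequent has the form $\seq A\lor B$: its antecedent is empty and its succedent is the non-atomic formula $A\lor B$. Inspecting the rules of $\seqWstar$ (Fig.~\ref{fig:wij sequent calculi}), I would argue that the only rule whose conclusion can match $\seq A\lor B$ is \irlor: the initial sequents \iinit\ and \ilbot\ both have a non-empty antecedent; every left rule introduces a principal formula in the (here empty) antecedent, so is excluded; the right rules \irland, \irimp\ introduce a principal formula of the wrong shape; and the modal right rules (\ruleiMbox, \ruleiMdiam, \ruleiNbox, \ruleiCbox, \ruleiCdiam, \ruleiKbox, \ruleiKdiam, \ruleiTdiam, \ruleiD, \ruleiPbox, \ruleiPdiam, \ruleiCD, \ruleiCDbox\ and the rest) all introduce a principal formula that is boxed or diamonded, hence not of the form $A\lor B$; moreover those modal rules that could in principle have empty succedent (\ruleidualandM, \ruleidualandC, \ruleidualandK, \ruleiNdiam, \ruleiDbox, \ruleiTbox, \ruleiPbox, \ruleiCDbox) require a $\Box$- or $\diam$-formula in the antecedent, which is empty here. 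Hence the last rule is \irlor, applied with premiss $\seq A$ or premiss $\seq B$. In either case $\seqWstar\vd\ \seq A$ or $\seqWstar\vd\ \seq B$, and by Theorem~\ref{th:eq seq ax} again (reading $\AND\emptyset\imp\OR\{A\}$ as $A$, and likewise for $B$), we conclude $\WL\vd A$ or $\WL\vd B$.

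The main obstacle, and the only place where care is genuinely needed, is making the rule-inspection exhaustive and correct: one must check every rule of every calculus $\seqWstar$ and confirm that none other than \irlor\ can produce a conclusion with empty antecedent and a disjunction as its sole succedent formula. This is routine but must be done uniformly across all the W-logic calculi; the crucial structural facts that make it work are that the antecedent of the conclusion is empty (ruling out all left rules and all modal rules with a modal formula in the antecedent) and that $A\lor B$ is neither atomic, nor an implication, nor a conjunction, nor a modal formula (ruling out \iinit, \irimp, \irland, and all modal right rules). Cut-freeness is essential, since \cut\ itself could otherwise produce the target sequent with an arbitrary cut formula.
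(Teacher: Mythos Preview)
Your proposal is correct and follows essentially the same approach as the paper's own proof: translate to the sequent calculus via Theorem~\ref{th:eq seq ax}, observe that the only rule yielding a conclusion of the form $\seq A\lor B$ is \irlor, and translate back. The only (harmless) difference is that you explicitly invoke cut admissibility, whereas in the paper the calculi $\seqWstar$ are defined without \cut\ as a primitive rule, so every derivation is already cut-free and this step is not needed.
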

\begin{proof}
If $\WL\vd A\lor B$, then
by Theorem~\ref{th:eq seq ax}, $\seqWstar\vd \ \seq A \lor B$.
The only rule of $\seqWstar$ with a consequence of the form $\seq A\lor B$ is \irlor,
thus \irlor\ must be the last rule applied in the derivation, with premiss either $\seq A$ or $\seq B$.
Then $\seqWstar\vd \ \seq A$ or $\seqWstar\vd \ \seq B$,
therefore $\WL\vd A$ or $\WL\vd B$.
\end{proof}

Furthermore, we can prove that derivability in W-logics is decidable.
To this aim, observe that for every rule $R$ of $\seqWstar$,
the premisses of $R$ have a smaller complexity than its conclusion,
with the only exceptions of \limp\ and \ruleiTbox\
which copy the principal formula into one premiss.
It follows that 
bottom-up proof search in $\seqWstar$ is not strictly terminating, 
however,
similarly to \cite[Ch.~4]{Troelstra:2000},
termination can be gained 
by controlling the applications of \limp\ and \ruleiTbox\ with a simple loop-checking in order
to avoid redundant  applications of these rules,
preserving at the same time the completeness of the calculi.
Adopting this restriction, 
it turns out that every 
proof tree for a root sequent $\G \seq \D$ is finite,
moreover there are only finitely many distinct proof trees for it.
Then, given the equivalence between $\seqWstar$ and $\WL$, it follows that 
derivability of $A$ in $\WL$ is decidable for any $A$:
the decision procedure  trivially consists in checking all possible derivations of $\seq A$ in $\seqWstar$.

\begin{theorem}[Decidability]
Given a W-logic $\WL$ and a formula $A$ of $\lan$,
it is decidable whether $A$ is derivable in $\WL$.
\end{theorem}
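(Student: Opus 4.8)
The plan is to transfer the question to the sequent calculus $\seqWstar$ and to turn the loop-checked proof search described above into an explicit decision procedure. By Theorem~\ref{th:eq seq ax} we have $\WL\vd A$ if and only if $\seqWstar\vd\ \seq A$, so it suffices to decide whether $\seq A$ has a derivation in $\seqWstar$.

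The first ingredient is the \emph{subformula property}: since $\seqWstar$ contains no cut rule, inspection of Fig.~\ref{fig:wij sequent calculi} shows that every formula occurring in a premiss of a rule is a subformula of a formula occurring in its conclusion. Hence every sequent appearing in a derivation of $\seq A$ is built solely from the finitely many subformulas of $A$. Using the height-preserving admissibility of weakening and contraction (Proposition~\ref{prop:adm str rules}), we may moreover restrict backward proof search to sequents $\G\seq\D$ whose antecedent is a \emph{set} of subformulas of $A$; there are then only finitely many admissible sequents.

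The second ingredient is termination of loop-checked search. Every rule of $\seqWstar$ strictly decreases the complexity of the sequent — the total number of connectives and modalities occurring in it — when passing from conclusion to premiss, with the sole exceptions of \limp\ and \ruleiTbox, which copy their principal formula into one premiss. For \ruleiTbox, in the set presentation the premiss either reproduces the conclusion, in which case loop-checking prunes the branch, or strictly enlarges the antecedent set; for \limp, the right premiss is strictly simpler, and the left premiss keeps the same antecedent set while its succedent strictly decreases in complexity. Hence along any branch, between two consecutive strictly complexity-decreasing rule applications there can occur only boundedly many applications of \limp\ and \ruleiTbox, each strictly enlarging the antecedent set; since antecedent sets are bounded in size, every branch of a loop-checked proof tree is finite. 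Completeness is unaffected: if a sequent $S$ occurs twice on a branch of some derivation, the subderivation above the lower occurrence of $S$ may be replaced by the one above the upper occurrence, yielding a derivation in which no sequent repeats on any branch.

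Finally, each rule has at most two premisses and, for a fixed conclusion, can be applied in only finitely many ways, so the space of loop-checked proof attempts for $\seq A$ is a finitely branching tree all of whose branches are finite; by K\"onig's lemma it is finite, so there are only finitely many candidate derivations of $\seq A$, and they can be effectively generated. The decision procedure is to generate all of them and check whether one is a correct derivation; by the equivalence with $\WL$ this decides derivability of $A$. I expect the termination argument — specifically, verifying that loop-checking on \limp\ and \ruleiTbox\ makes every branch finite without destroying completeness — to be the only non-routine step, everything else following from the subformula property and the results already established in this section.
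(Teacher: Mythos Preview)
Your proposal is correct and follows essentially the same route as the paper: reduce to $\seqWstar$ via Theorem~\ref{th:eq seq ax}, invoke the subformula property, and obtain termination of backward proof search by loop-checking the only two non-decreasing rules \limp\ and \ruleiTbox, exactly as the paper does (deferring to \cite[Ch.~4]{Troelstra:2000}). One small inaccuracy worth fixing: the left premiss of \limp\ does \emph{not} have a succedent of strictly smaller complexity than the conclusion---the succedent changes from $\D$ to the subformula $A$ of $A\imp B$, and $A$ need not be simpler than $\D$---so your sentence ``its succedent strictly decreases in complexity'' and the later ``each strictly enlarging the antecedent set'' are both false for \limp; termination for \limp\ rests instead on the antecedent set being unchanged together with there being only finitely many possible succedents, which you have already secured by your finitely-many-admissible-sequents observation, so the argument as a whole still goes through.
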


Finally, we can prove that all W-logics 
enjoy Craig interpolation.
For every formula $A$ of $\lan$ and every multiset $\G = B_1, ..., B_n$,
we define $\var(A) = \{\bot\} \cup \{p \in \atm \mid p \textup{ occurs in } A\}$,
and $\var(\G) = \var(B_1) \cup ... \cup \var(B_n)$.
Then Craig interpolation
amounts to the following property.

\begin{definition}
A logic $\WL$ 
enjoys  \emph{Craig interpolation}
if for all $A,B\in\lan$,
if $\WL\vd A \imp B$, then there is $C\in\lan$ such that 
$\WL\vd A \imp C$,
$\WL\vd C \imp B$, 
and $\var(C)\subseteq\var(A)\cap\var(B)$.
\end{definition}

The proof of Craig interpolation is based on the following lemma.


\begin{lemma}\label{lemma:craig}
For every calculus $\seqWstar$, if $\seqWstar\vd \G_1,\G_2 \seq \D$, 
 then there is $C\in\lan$ such that 
$\seqWstar\vd \G_1 \seq C$,
$\seqWstar\vd C, \G_2 \seq \D$, 
and $\var(C)\subseteq\var(\G_1)\cap\var(\G_2, \D)$.
\end{lemma}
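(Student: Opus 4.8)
The plan is to prove Lemma~\ref{lemma:craig} by the Maehara method, arguing by induction on the height of the derivation of $\G_1,\G_2\seq\D$ in $\seqWstar$. Since $\seqWstar$ contains no cut rule, every such derivation is automatically cut-free, and I would show that for \emph{any} display of the antecedent as $\G_1,\G_2$ an interpolant $C$ with the three stated properties can be constructed, using the height-preserving admissibility of weakening (Proposition~\ref{prop:adm str rules}) freely to adjust side contexts in the interpolant derivations.

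For the base cases: if $\G_1,\G_2\seq\D$ is an instance of \iinit, say $\G_1,\G_2\seq p$ with $p$ occurring in the antecedent, I would take $C:=p$ when $p\in\G_1$ (both $\G_1\seq p$ and $p,\G_2\seq p$ are instances of \iinit) and $C:=\top$ when $p\in\G_2$ but $p\notin\G_1$ (then $\G_1\seq\top$ is derivable, and $\top,\G_2\seq p$ follows from \iinit\ by weakening); if it is an instance of \ilbot, put $C:=\bot$ or $C:=\top$ according to whether $\bot\in\G_1$ or $\bot\in\G_2$. In all these cases $\var(C)\subseteq\{\bot,p\}$, so the variable condition is immediate. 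The inductive cases for the propositional rules are the standard Maehara cases (cf.~\cite[Ch.~4]{Troelstra:2000}): according to which of $\G_1,\G_2$ the active formulas of the last rule belong to, the new interpolant is built from the interpolant(s) of the premiss(es) by $\lor$ or $\land$, or is inherited unchanged.

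For the inductive cases of the modal rules — the genuinely new part — I would argue uniformly. For a rule whose conclusion displays principal modal formulas, I distinguish cases according to which of $\G_1,\G_2$ each principal formula lies in, and (for $\ruleiCbox$, $\ruleiKbox$, $\ruleiCdiam$, $\ruleiKdiam$, $\ruleidualandC$, $\ruleiCDbox$) how the boxed context splits; applying the induction hypothesis to the premiss with the induced splitting yields an interpolant $C$, and the interpolant for the conclusion is $\Box C$, $\diam C$, or $C$ depending on the case — e.g.\ for $\ruleiCbox$ one takes $\Box C$ when $\Box A$ is on the left and $C$ when it is on the right, and the pattern is analogous for $\ruleiMbox$, $\ruleiMdiam$, $\ruleiD$, while for $\ruleidualandM$, $\ruleiNdiam$, $\ruleidualandK$ (where a premiss has empty succedent) the interpolant is simply $\bot$ or $\top$, since one half of the partition already derives the relevant sequent by one application of the rule. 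One then checks that the two required sequents are derivable using exactly the modal rule in question together with weakening, and that the variable condition persists because $\var(\Box C)=\var(\diam C)=\var(C)$ and $\var(A)\subseteq\var(\Box A)=\var(\diam A)$.

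The main difficulty will be keeping the interpolant derivations inside the fixed calculus $\seqWstar$. Since $\ruleiNbox$ is not a rule of every $\seqWstar$ (it is absent, e.g., from $\seqWM$ and $\seqWMC$), the subcases in which a boxed context is split with one half empty cannot in general be closed by necessitating the premiss interpolant $C$; there one must instead observe that the non-empty half of the partition already proves the conclusion sequent outright, by a single application of the last rule (whose premiss is exactly the one available), and fall back on the trivial interpolant $\top$ — or, dually, $\bot$. A similar care is needed for the right rules of $\diam$. Once these degenerate subcases are isolated and dispatched with trivial interpolants, the rest is a routine if lengthy case inspection; Craig interpolation for $\WL$ then follows by applying the lemma with $\G_1:=A$, $\G_2:=\emptyset$, $\D:=B$, together with Theorem~\ref{th:eq seq ax}.
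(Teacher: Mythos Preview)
Your proposal is correct and follows essentially the same Maehara-style approach as the paper: induction on the height of the cut-free derivation, with modal interpolants $\Box C$, $\diam C$, $\top$, or $\bot$ according to how the principal and boxed-context formulas split between $\G_1$ and $\G_2$; the paper likewise writes out only one representative modal case ($\ruleiCdiam$) and defers the propositional rules to \cite{Ono}, and your handling of the degenerate splits via trivial interpolants matches the paper's case (i) exactly. One small caveat: for $\ruleidualandM$ (and $\ruleidualandK$, $\ruleidualandC$) the mixed subcases where $\Box A$ and $\diam B$ land in different halves of the partition require $\Box C$ or $\diam C$ rather than merely $\bot$/$\top$, but this is handled by precisely the mechanism you already describe for the other modal rules.
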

\begin{proof}
By induction on the height $h$ of the derivation of $\G_1,\G_2 \seq \D$,
taking into account the last rule applied in the derivation.
If $h = 0$ or the last rule applied is propositional we refer to \cite{Ono}. 
Here we consider just one  significant case involving a modal rule,
for the other rules the proof is analogous.

Let \ruleiCdiam\ be the last rule applied in the derivation.
Then $\G_1,\G_2 \seq \D$ has the form $\G_1', \Box\G, \diam A, \G_2'\seq \diam B$
and it is obtained from the premiss $\G, A \seq B$.
There are four possible partitions of $\G_1', \Box\G, \diam A, \G_2'$ into  $\G_1,\G_2$.

\begin{itemize}
\item[(i)] $\G_1 = \G_1'$ and $\G_2 = \Box\G, \diam A, \G_2'$.
Then $\top$ is an interpolant: $\G_1' \seq \top$ is derivable, and from $\G, A \seq B$, by \ruleiCdiam\ we obtain $\top, \Box\G, \diam A, \G_2' \seq \diam B$.

\item[(ii)] $\G_1 = \G_1',\Box\G, \diam A$ and $\G_2 = \G_2'$.
By \ih, there is $C$ such that $\G, A \seq C$ and $C \seq B$ are derivable, and
 $\var(C)\subseteq\var(\G, A)\cap\var(B)$.
 Then by \ruleiCdiam\ we obtain $\G_1',\Box\G, \diam A \seq \diam C$ and $\diam C, \G_2' \seq \diam B$.
 Since $\var(\diam C) = \var(C)$, $\diam C$ is an interpolant.
\end{itemize}

The following two partitions are possible if $\G = D_1, ..., D_n$ and $n\geq 2$.
\begin{itemize}
\item[(iii)] $\G_1 = \G_1', \Box D_1, ..., \Box D_k$ and $\G_2 = \Box D_{k+1}, ..., \Box D_n, \diam A, \G_2'$
(for $1\leq k<n$).
By \ih, there is $C$ such that $D_1, ..., D_k \seq C$ and $C, D_{k+1}, ..., D_n, A \seq B$ are derivable, and
 $\var(C)\subseteq\var(D_1, ..., D_k)\cap\var(D_{k+1}, ..., D_n, A, B)$.
 Then by \ruleiCbox, $\G_1', \Box D_1, ..., \Box D_k \seq \Box C$ is derivable,
 and by \ruleiCdiam, $\Box C, \Box D_{k+1}, ..., \Box D_n, \diam A, \G_2' \seq \diam B$ is derivable.
 Then $\Box C$ is an interpolant.
 
\item[(iv)] $\G_1 = \G_1', \Box D_1, ..., \Box D_k, \diam A$ and $\G_2 = \Box D_{k+1}, ..., \Box D_n, \G_2'$
(for $1\leq k<n$).
By \ih, there is $C$ such that $D_1, ..., D_k, A \seq C$ and $C, D_{k+1}, ..., D_n \seq B$ are derivable, and
 $\var(C)\subseteq\var(D_1, ..., D_k, A)\cap\var(D_{k+1}, ..., D_n, B)$.
 Then by \ruleiCdiam, $\G_1', \Box D_1, ..., \Box D_k, \diam A \seq \diam C$ and
 $\diam C, \Box D_{k+1}, ..., \Box D_n, \G_2' \seq \diam B$ are derivable.
 Then $\diam C$ is an interpolant.
\end{itemize}
\end{proof}
%
%


\begin{theorem} 
Every W-logic $\WL$ enjoys Craig interpolation.
\end{theorem}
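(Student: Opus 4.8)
The plan is to obtain Craig interpolation for the axiomatic system $\WL$ as an immediate corollary of the split-interpolation Lemma~\ref{lemma:craig} for the cut-free calculus $\seqWstar$, combined with the equivalence between the two presentations recorded in Theorem~\ref{th:eq seq ax}. The point is that the single-succedent shape of the sequents in $\seqWstar$ is precisely what makes the implicational formulation of interpolation available: an interpolant for the sequent $A \seq B$ is exactly an interpolant for the implication $A \imp B$.

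Concretely, I would start from arbitrary $A, B \in \lan$ with $\WL \vd A \imp B$. Applying Theorem~\ref{th:eq seq ax} to the sequent whose antecedent and succedent are the singleton multisets $A$ and $B$ (so that $\AND\G$ reduces to $A$ and $\OR\D$ reduces to $B$), this is equivalent to $\seqWstar \vd A \seq B$. Now I would apply Lemma~\ref{lemma:craig} to a derivation of $A \seq B$, using the partition $\G_1 := A$, $\G_2 := \emptyset$, $\D := B$ of the antecedent; the empty context $\G_2$ is admissible since all rules of $\seqWstar$ allow empty side contexts. This produces a formula $C \in \lan$ with $\seqWstar \vd A \seq C$ and $\seqWstar \vd C, \G_2 \seq B$, that is, $\seqWstar \vd C \seq B$, and with $\var(C) \subseteq \var(\G_1) \cap \var(\G_2, \D) = \var(A) \cap \var(B)$ (here $\var(\G_2, \D) = \var(B)$, and since $\bot$ belongs to every $\var(\cdot)$ by convention, this causes no mismatch with the definition). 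Applying Theorem~\ref{th:eq seq ax} once more to the two derived sequents turns them into $\WL \vd A \imp C$ and $\WL \vd C \imp B$, which together with the variable inclusion is exactly what the definition of Craig interpolation demands.

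All the genuine work — the induction on derivation height and the case analysis over the modal rules, especially the several ways of splitting the boxed context $\Box\G$ in \ruleiCbox\ and \ruleiCdiam\ — has already been absorbed into Lemma~\ref{lemma:craig}, so at this level there is no real obstacle. The only things worth double-checking are that the degenerate instance $\G_2 = \emptyset$ of the lemma behaves as expected and that the variable bound it yields translates faithfully into the statement of the definition; accordingly I would expect this final argument to occupy only a couple of lines.
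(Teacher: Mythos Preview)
Your proposal is correct and matches the paper's own proof essentially line for line: from $\WL\vd A\imp B$ one passes via Theorem~\ref{th:eq seq ax} to $\seqWstar\vd A\seq B$, applies Lemma~\ref{lemma:craig} with the partition $\G_1=A$, $\G_2=\emptyset$, $\D=B$, and then returns to $\WL$ via Theorem~\ref{th:eq seq ax}. The paper's argument is exactly this, just stated more tersely.
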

\begin{proof}
Suppose that $\WL\vd A \imp B$.
Then $\seqWstar \vd A \seq B$.
By Lemma~\ref{lemma:craig},
there is $C\in\lan$ such that 
$\var(C)\subseteq\var(A)\cap\var(B)$,
$\seqWstar\vd A \seq C$, and
$\seqWstar\vd C \seq B$,
thus
$\WL\vd A \imp C$ and
$\WL\vd C \imp B$.
\end{proof}

\section{Semantics}\label{sec:semantics}
We now define constructive \neigh\ models (\CNM s) that characterise the constructive modal logics defined in Sec.~\ref{sec:calculi}.
\CNM s
are defined analogously to \wij's 
relational models \cite{wij}:
we enrich \intu\ Kripke models with a \neigh\ function
(rather than a binary relation as in \wij's models),
moreover we generalise the classical satisfaction clauses for modal formulas to 
all $\less$-successors,
so that hereditariness is built into the clauses:
in order that $w$ 
satisfies $\Box A$, 
we require that 
for all successors of $w$ there is a neighbourhood $\alpha$ such that $\alpha\ufor A$,
and similarly for $\diam A$.
We show that, for every classical logic 
characterised by \neigh\ models 
satisfying some conditions from Def.~\ref{def:neigh model},
the corresponding W-logic 
 is characterised by the \CNM s
satisfying exactly the same conditions.
\CNM s are 
defined as follows.

%
%
%
%

\begin{definition}\label{def:semantics}
A \emph{\const\ \neigh\ model} (\CNM)  is a tuple $\M = \langle \W, \less, \N, \V\rangle$, 
where 
$\W$ is a non-empty set of worlds, 
$\less$ is a preorder on $\W$,  
$\N: \W\longrightarrow \pow(\pow(\W))$ is a neighbourhood function, and
$\V : \atm \longrightarrow \pow(\W)$ is a hereditary valuation function
(\ie, if $w\in\V(p)$ and $w\less v$, then $v\in\V(p)$).
The forcing relation $\M, w\Vd A$ is defined as in Def.~\ref{def:neigh model} for 
$A = p, \bot, B \land C, B \lor C$, otherwise it is as follows:
\begin{center}
\begin{tabular}{lllll}
$\M, w \Vd B \imp C$ & iff & for all $v\more w$, $\M, v \Vd B$ implies $\M, v \Vd C$. \\
$\M, w\Vd\Box B$ & iff & for all $v\more w$, there is $\alpha\in\N(v)$ such that $\alpha\ufor B$. \\
$\M, w\Vd\diam B$ & iff & for all $v\more w$, for all $\alpha\in\N(v)$, $\alpha\efor B$. \\ 
\end{tabular}
\end{center}
We consider the following properties on \CNM s:
\begin{center}
\begin{tabular}{llll}
(\cC) \ If $\alpha,\beta\in\N(w)$, then $\alpha\cap\beta\in\N(w)$. &&
(\cN) \ $\N(w)\not=\emptyset$. \\
(\cD) \  If $\alpha,\beta\in\N(w)$, then $\alpha\cap\beta\not=\emptyset$. &&
(\cP) \  $\emptyset\notin\N(w)$. \\
(\cT) \ If $\alpha\in\N(w)$, then $w\in\alpha$. \\
\end{tabular}
\end{center}
We say that $\M$ is a model for a logic $\Wlogic$, or is a $\Wlogic$-model,
if for every modal axiom \varax\ of $\Wlogic$
(among \axCbox, \axNbox, \axTbox, \axD, \axPdiam),
$\M$ satisfies the corresponding condition (\cX).
\end{definition}

\CNM s 
represent the simplest way of combining \intu\ Kripke models and \neigh\ models.
From the definition of $\V$ and of the satisfaction clauses, it immediately follows that 
\CNM s 
enjoy the hereditary property. 

\begin{proposition}[Hereditary property]\label{prop:her}
For all $A\in\lan$ and all \CNM s $\M$, if $\M,w\Vd A$ and $w\less v$, then $\M,v\Vd A$.
\end{proposition}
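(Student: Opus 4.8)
The plan is to prove the claim by a routine induction on the structure of the formula $A$, the only substantive ingredient being that $\less$ is a preorder and hence transitive.

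For the base cases: if $A = p \in \atm$, then $\M, w \Vd p$ unfolds to $w \in \V(p)$, and the hereditariness hypothesis on $\V$ gives $v \in \V(p)$, that is $\M, v \Vd p$; and if $A = \bot$ the statement is vacuous, since no world forces $\bot$.

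For the inductive step I would treat the connectives $\land$ and $\lor$ by a direct appeal to the induction hypothesis on the immediate subformulas of $A$. The three remaining cases, $A = B \imp C$, $A = \Box B$, and $A = \diam B$, are handled uniformly and do not even invoke the induction hypothesis: each of their satisfaction clauses at $w$ is a statement quantifying universally over all $u \more w$, so for any $v \more w$ the corresponding clause at $v$ is obtained by restricting this quantifier, using transitivity of $\less$ to infer $u \more w$ from $u \more v$ and $v \more w$. Concretely, $\M, w \Vd \Box B$ says that for every $u \more w$ there is $\alpha \in \N(u)$ with $\alpha \ufor B$; given $v \more w$, every $u \more v$ also satisfies $u \more w$, so the same condition holds and thus $\M, v \Vd \Box B$; the cases of $\imp$ and $\diam$ are verbatim the same.

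I do not expect any real obstacle: the point that makes everything work — and that motivated building hereditariness directly into the clauses for $\imp$, $\Box$, and $\diam$ — is precisely the transitivity of $\less$, which forces these universally quantified clauses to be upward closed along $\less$. The induction hypothesis plays a role only for the genuinely propositional cases $\land$ and $\lor$, together with the hereditariness of $\V$ in the atomic case.
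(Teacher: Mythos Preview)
Your proposal is correct and follows essentially the same approach as the paper: an induction on the structure of $A$, with the modal cases (and implication) handled by the transitivity of $\less$ applied to the outer universal quantifier in the satisfaction clause. Your additional remark that the induction hypothesis is not actually invoked in the $\imp$, $\Box$, and $\diam$ cases is accurate and makes explicit the mechanism that the paper leaves implicit.
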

\begin{proof}
By induction on the construction of $A$.
We only consider the inductive cases $A= \Box B, \diam B$ as the other cases are standard.
($A = \Box B$) If $w\Vd \Box B$, then for all $u\more w$, there is $\alpha\in\N(u)$ such that $\alpha\ufor B$,
then for all $u\more v$, there is $\alpha\in\N(u)$ such that $\alpha\ufor B$, thus $v\Vd \Box B$.
($A = \diam B$) If $w\Vd \diam B$, then for all $u\more w$, for all $\alpha\in\N(u)$, $\alpha\efor B$,
then for all $u\more v$,  for all $\alpha\in\N(u)$, $\alpha\efor B$, thus $v\Vd \diam B$.
\end{proof}

We show that, for any classical modal logic characterised by \neigh\ models
satisfying some conditions among (\cC), (\cN), (\cT), (\cD), (\cP),
the corresponding W-logic is characterised by the \CNM s satisfying the same conditions.
We first show that W-logics are sound with respect to their classes of \CNM s,
then prove their completeness by a canonical model construction.

%

\begin{theorem}[Soundness]\label{th:soundness}
For every W-logic $\Wlogic$, 
if $\Wlogic\vd A$, then $\M\models A$ 
for all $\Wlogic$-models $\M$.
\end{theorem}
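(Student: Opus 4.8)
The proof is by induction on the derivation of $A$ in the axiomatic system $\Wlogic$. The base cases are the axioms — the $\IPL$ axioms, \axdualand, \monbox, \mondiam, and the particular modal axioms (\axNbox, \axCbox, \axKdiam, \axTbox, \axTdiam, \axD, \axPdiam) present in $\Wlogic$ — and the inductive steps are the rules, namely modus ponens and the two monotonicity rules \monbox\ and \mondiam. For the inductive step on modus ponens one uses that $\M,w\Vd A\imp B$ together with $\M,w\Vd A$ yields $\M,w\Vd B$ (taking $v=w$ in the clause for $\imp$), so validity is preserved. For \monbox: assuming $\M\models A\imp B$, I must show $\M\models \Box A\imp\Box B$; fix $w$ and $v\more w$ with a witness $\alpha\in\N(v)$ such that $\alpha\ufor A$ — then since $\M\models A\imp B$ and by the hereditary property (Prop.~\ref{prop:her}) each point of $\alpha$ forces $A\imp B$, we get $\alpha\ufor B$, so $w\Vd\Box B$. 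The rule \mondiam\ is dual: if $\M\models A\imp B$ and $w\Vd\diam A$, then for every $v\more w$ and every $\alpha\in\N(v)$ there is some $u\in\alpha$ with $u\Vd A$, hence $u\Vd B$, so $\alpha\efor B$ and $w\Vd\diam B$.

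\textbf{The axiom cases.} For the $\IPL$ axioms one invokes the standard fact that \CNM s, being ordinary intuitionistic Kripke models once the modal vocabulary is ignored, validate all intuitionistic theorems; this is routine and can be cited. The interesting cases are the modal axioms, and here the key point is that each such axiom is matched by its semantic condition \emph{in the defining clause of a $\Wlogic$-model}. So for each axiom I would check: \axdualand, i.e.\ $\neg(\Box A\land\diam\neg A)$, holds in \emph{every} \CNM\ (no condition needed) — indeed if $w\Vd\Box A$ then some $\alpha\in\N(w)$ has $\alpha\ufor A$, while $w\Vd\diam\neg A$ forces $\alpha\efor\neg A$, giving a point forcing both $A$ and $\neg A$, impossible; one must also unwind the outer $\imp$ and the universal-$v$ quantifier, but the hereditary property makes this routine. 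For \axNbox\ ($\Box\top$) one uses (\cN): $\N(v)\neq\emptyset$ for the relevant $v$, and any $\alpha\in\N(v)$ trivially satisfies $\alpha\ufor\top$. For \axCbox\ ($\Box A\land\Box B\imp\Box(A\land B)$) one uses (\cC): given $\alpha,\beta\in\N(v)$ with $\alpha\ufor A$, $\beta\ufor B$, the set $\alpha\cap\beta\in\N(v)$ satisfies $\alpha\cap\beta\ufor A\land B$. For \axKdiam\ ($\Box(A\imp B)\imp(\diam A\imp\diam B)$) one combines: from $w\Vd\Box(A\imp B)$ get $\alpha\in\N(v)$ with $\alpha\ufor A\imp B$, and from $w\Vd\diam A$ get $u\in\alpha$ with $u\Vd A$, hence $u\Vd B$, so $\alpha\efor B$. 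For \axTbox\ ($\Box A\imp A$) and \axTdiam\ ($A\imp\diam A$) one uses (\cT) ($w\in\alpha$ for $\alpha\in\N(w)$), together with Prop.~\ref{prop:her} to handle the $\imp$. For \axD\ ($\Box A\imp\diam A$) one uses (\cD): a witness $\alpha\in\N(v)$ with $\alpha\ufor A$ meets every $\beta\in\N(v)$, so every $\beta$ satisfies $\beta\efor A$. For \axPdiam\ ($\diam\top$) one uses (\cP) ($\emptyset\notin\N(v)$): any $\alpha\in\N(v)$ is nonempty, so $\alpha\efor\top$.

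\textbf{Expected obstacle.} There is no deep obstacle — the whole theorem is a bookkeeping exercise — but the one genuinely delicate point is the interaction between the \emph{generalised} satisfaction clauses (quantifying over all $\less$-successors $v\more w$) and the axiom verifications: in each case, after fixing $w$ one must pass to an arbitrary $v\more w$, and when the axiom is an implication $C\imp D$ one must further fix $v'\more v$ (equivalently $v'\more w$, by transitivity of the preorder) with $v'\Vd C$ and deduce $v'\Vd D$. Keeping the nested quantifiers straight, and invoking Prop.~\ref{prop:her} at the right moments so that the hypotheses propagate to the successors, is where care is needed; transitivity and reflexivity of $\less$ are used throughout. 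Once this pattern is set up once (say for \axdualand), the remaining axioms follow the same template and can be dispatched quickly.
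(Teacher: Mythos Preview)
Your overall strategy is correct and matches the paper's approach: induction on the axiomatic derivation, verifying each axiom against the corresponding semantic condition and checking that the rules preserve validity. Most of your axiom verifications are fine and essentially coincide with the paper's.

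There is, however, a genuine gap in your treatment of \axKdiam. You write: ``from $w\Vd\Box(A\imp B)$ get $\alpha\in\N(v)$ with $\alpha\ufor A\imp B$, and from $w\Vd\diam A$ get $u\in\alpha$ with $u\Vd A$, hence $u\Vd B$, so $\alpha\efor B$.'' This only shows $\alpha\efor B$ for the \emph{particular} neighbourhood $\alpha$ witnessing $\Box(A\imp B)$; to conclude $w\Vd\diam B$ you must show $\gamma\efor B$ for \emph{every} $\gamma\in\N(v)$. Your argument does not do this, and in fact \axKdiam\ is not valid in arbitrary \CNM s: you need condition (\cC). The correct argument (which the paper gives, by contradiction) runs as follows: given an arbitrary $\gamma\in\N(v)$, take the witness $\beta\in\N(v)$ with $\beta\ufor A\imp B$; by (\cC), $\gamma\cap\beta\in\N(v)$; by $w\Vd\diam A$, there is $u\in\gamma\cap\beta$ with $u\Vd A$; since $u\in\beta$, $u\Vd B$; since $u\in\gamma$, $\gamma\efor B$. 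Note that \axKdiam\ only appears in the axiomatisations of $\WMC$ and its extensions, all of which include \axCbox, so (\cC) is indeed available in the relevant $\Wlogic$-models --- but you must invoke it.

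A minor presentational point: in your opening sentence you list \monbox\ and \mondiam\ among the ``base cases'' (axioms), then immediately treat them correctly as inductive steps (rules). Just drop them from the list of axioms.
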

\begin{proof}
As usual, we need to show that 
the axioms of $\Wlogic$ are valid in all $\Wlogic$-models,
and that the rules of $\Wlogic$ preserve the validity in $\Wlogic$-models.

\begin{enumerate}[leftmargin=*, align=left]
\item[(\ruleMbox)] 
Assume that $\M\models A \imp B$ and $w\Vd \Box A$.
Then for all $v\more w$, there is $\alpha\in\N(v)$ such that $\alpha\ufor A$.
If follows that $\alpha\ufor B$. Therefore $w\Vd \Box B$.

\item[(\ruleMdiam)] 
Assume that  $\M\models A \imp B$ and $w\Vd \diam A$.
Then for all $v\more w$, for all $\alpha\in\N(v)$, $\alpha\efor A$.
If follows that $\alpha\efor B$. Therefore $w\Vd \diam B$.

\item[(\axdualand)] 
Assume that  $w\Vd\Box A \land \diam\neg A$.
Then for all $v\more w$, there is $\alpha\in\N(v)$ such that $\alpha\ufor A$, and for all $\beta\in\N(v)$, $\beta\efor \neg A$.
Thus there is $\gamma\in\N(w)$ such that $\gamma\efor A\land \neg A$, which is impossible.
Therefore $\M\models\neg(\Box A \land \diam\neg A)$.

\item[(\axCbox)] 
Assume that  $\M$ satisfies condition (\cC) and $w\Vd\Box A \land \Box B$.
Then for all $v\more w$, there is $\alpha\in\N(v)$ such that $\alpha\ufor A$,
and there is $\beta\in\N(v)$ such that $\beta\ufor B$.
By (\cC), $\alpha\cap\beta\in\N(v)$, moreover $\alpha\cap\beta\ufor A\land B$.
Thus $w\Vd\Box(A\land B)$.

\item[(\axKdiam)] 
Assume by contradiction that $\M$ satisfies 
 (\cC), $w\Vd\Box (A \imp B)$, $w \Vd \diam A$
and  $w\not\Vd\diam B$.
By $w\not\Vd\diam B$, there are $v\more w$ and $\alpha\in\N(v)$ such that $\alpha\not\efor B$.
Then by $w\Vd\Box (A \imp B)$, there is $\beta\in\N(v)$ such that $\beta\ufor A \imp B$.
Thus by (\cC), $\alpha\cap\beta\in\N(v)$.
It follows $\alpha\cap\beta\not\efor B$ and $\alpha\cap\beta\ufor A\imp B$.
However by $w \Vd \diam A$,  $\alpha\cap\beta\efor A$,
thus $\alpha\cap\beta\efor B$, which gives a contradiction.

\item[(\axNbox)]
If $\M$ satisfies the condition (\cN), then for all $w$ there is $\alpha\in\N(w)$. Moreover, $\alpha\ufor\top$,
therefore $\M\models\Box\top$.


\item[(\axPdiam)]
If $\M$ satisfies the condition (\cP), then for all $w$ and
all $\alpha\in\N(v)$, $\alpha\not=\emptyset$, thus $\alpha\efor \top$. Therefore $\M\models\diam \top$.


\item[(\axD)] 
Assume by contradiction that $\M$ satisfies the condition
 (\cD), $w\Vd\Box A$, and  $w\not\Vd\diam A$.
Then there are $v\more w$ and $\alpha\in\N(v)$ such that $\alpha\not\efor A$.
Moreover, there is $\beta\in\N(v)$ such that $\beta\ufor A$.
By (\cD), there is $u\in\alpha\cap\beta$. Thus $u\not\Vd A$ and $u\Vd A$.
Therefore $w\Vd\diam A$.

\item[(\axTbox, \axTdiam)]
Suppose that $\M$ satisfies the condition (\cT). Then if $w\Vd\Box A$, 
 there is $\alpha\in\N(w)$ such that $\alpha\ufor A$. By (\cT), $w\in\alpha$, thus $w\Vd A$.
Moreover, if $w\Vd A$, then by Prop.~\ref{prop:her}, $v\Vd A$ for all $v\more w$.
By (\cT), $v\in\alpha$ for all $\alpha\in\N(v)$, thus $\alpha\efor A$.
Therefore $w\Vd\diam A$.
\end{enumerate}
\end{proof}

\subsection{Completeness}


We now prove that W-logics are complete with respect to the corresponding classes of \CNM s.
As usual, for every logic $\Wlogic$, we call $\WL$-\emph{prime} any set $\Sigma$ of formulas of $\lan$ 
such that 
$\Sigma\not\vd_{\WL}\bot$ (consistency), 
if $\Sigma\vd_{\WL} A$, then $A\in\Sigma$ (closure under derivation), and 
if $A\lor B\in\Sigma$, then $A\in\Sigma$ or $B\in\Sigma$ (disjunction property).
Moreover, for every set of formulas $\Sigma$, 
we denote $\boxm\Sigma$ the set $\{A \mid \Box A\in\Sigma\}$.
One can prove in a standard way the following lemma.

\begin{lemma}[Lindenbaum]\label{lemma:lind}
If $\Sigma\not\vd_{\WL} A$, then there is a $\WL$-prime set $\Pi$ such that $\Sigma\subseteq\Pi$ and $A\notin\Pi$.
\end{lemma}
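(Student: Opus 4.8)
The plan is to carry out the standard Lindenbaum construction; the only slightly delicate point is the step that secures the disjunction property, and no modal reasoning is involved, since the sole feature of $\WL$ beyond pure propositional manipulation that we exploit is that $\WL$ extends $\IPL$. First I fix an enumeration $B_0, B_1, B_2, \dots$ of all formulas of $\lan$ (the language is countable). I then define an increasing chain of sets by $\Pi_0 := \Sigma$ and
\[
\Pi_{n+1} := \begin{cases} \Pi_n \cup \{B_n\} & \text{if } \Pi_n \cup \{B_n\} \not\vd_{\WL} A, \\ \Pi_n & \text{otherwise,} \end{cases}
\]
and put $\Pi := \bigcup_n \Pi_n$. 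By induction on $n$ one checks that $\Pi_n \not\vd_{\WL} A$ for every $n$: the base case is the hypothesis $\Sigma \not\vd_{\WL} A$, and the induction step is immediate from the case distinction. Since $\vd_{\WL}$ is finitary (a deduction uses only finitely many premises, all of which lie in some $\Pi_n$), it follows that $\Pi \not\vd_{\WL} A$; in particular $A \notin \Pi$, and clearly $\Sigma = \Pi_0 \subseteq \Pi$.

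It then remains to verify that $\Pi$ is $\WL$-prime. Consistency is immediate: if $\Pi \vd_{\WL} \bot$ then, $\WL$ extending $\IPL$, also $\Pi \vd_{\WL} A$, contradicting the above. For closure under derivation, suppose $\Pi \vd_{\WL} C$ and let $n$ be the stage at which $C = B_n$ is offered; if $C$ were rejected, i.e.\ $\Pi_n \cup \{C\} \vd_{\WL} A$, then composing the two deductions by reasoning about the relevant implications in $\IPL$ would yield $\Pi \vd_{\WL} A$, a contradiction; hence $C$ is accepted and $C \in \Pi$.

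The one point that requires a small argument is the disjunction property. Suppose $C \lor D \in \Pi$ while $C \notin \Pi$ and $D \notin \Pi$, and let $m$, $k$ be the stages at which $C$ and $D$ are offered; by the preceding paragraph both are rejected, so $\Pi_m \cup \{C\} \vd_{\WL} A$ and $\Pi_k \cup \{D\} \vd_{\WL} A$. Unfolding the definition of $\vd_{\WL}$, there are finite conjunctions $G_1$ of members of $\Pi_m$ and $G_2$ of members of $\Pi_k$ with $\vd_{\WL} G_1 \land C \imp A$ and $\vd_{\WL} G_2 \land D \imp A$; writing $G := G_1 \land G_2$, propositional reasoning in $\IPL$ gives $\vd_{\WL} G \land (C \lor D) \imp A$. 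As $G$ is a conjunction of members of $\Pi$ and $C \lor D \in \Pi$, this yields $\Pi \vd_{\WL} A$, a contradiction. Hence $C \in \Pi$ or $D \in \Pi$, so $\Pi$ is $\WL$-prime, which completes the argument.

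I anticipate no genuine obstacle: the construction is entirely routine. The only mildly delicate bookkeeping is in the disjunction-property step, where one must carry along the finitely many side premises collected at the two rejected stages and merge the corresponding $\IPL$-derivations into a single deduction of $A$ from $\Pi$; everything else reduces to the finitariness of $\vd_{\WL}$ and the fact that $\WL$ contains $\IPL$.
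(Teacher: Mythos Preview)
Your argument is correct and is exactly the standard Lindenbaum construction the paper has in mind; the paper itself gives no details beyond the remark that the lemma is proved ``in a standard way.'' The only small streamlining you might mention is that in the disjunction-property step one can take $N = \max(m,k)$ so that both rejection facts live over the single set $\Pi_N$, which makes the combination $\Pi_N \cup \{C \lor D\} \vd_{\WL} A$ via the $\IPL$ rule for $\lor$-elimination slightly more transparent---but your version with $G = G_1 \land G_2$ is equally valid.
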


We also consider the following notion of segment 
(we adopt the terminology of \cite{wij}),
and prove the subsequent lemma that will be needed in the following.

\begin{definition}\label{def:segm}
For every logic $\Wlogic$, 
a $\WL$-\emph{\seg} 
is a pair $(\Sigma, \CC)$,
where $\Sigma$ is a $\WL$-prime set, and $\CC$ is a class of sets of $\WL$-prime sets such that:
\begin{itemize}
\item if $\Box A\in\Sigma$, then there is $\U\in\CC$ such that for all $\Pi\in\U$, $A\in\Pi$; and
\item if $\diam A\in\Sigma$, then for all $\U\in\CC$, there is $\Pi\in\U$ such that $A\in\Pi$.
\end{itemize}
$\WCstar$-, $\WDstar$- and $\WTstar$-segments must satisfy also the following 
conditions:
($\WCstar$) If $\U,\U'\in\CC$, then $\U\cap\U'\in\CC$.
($\WDstar$) If $\U,\U'\in\CC$, then $\U\cap\U'\not=\emptyset$.
($\WTstar$) For all $\U\in\CC$, $\Sigma\in\U$.
\end{definition}

\begin{lemma}\label{lemma:segm}
For every $\WL$-prime set $\Sigma$, there exists a $\WL$-segment $(\Sigma, \CC)$.
\end{lemma}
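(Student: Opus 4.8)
The plan is to take for $\CC$ the family of ``principal clusters'' determined by the boxed formulas of $\Sigma$: for a formula $A$, let $\U_A$ be the set of all $\WL$-prime sets $\Pi$ with $A \in \Pi$, and put $\CC := \{\U_A \mid \Box A \in \Sigma\}$. Since $\lan$ is countable, this is a set of sets of $\WL$-prime sets, so $(\Sigma, \CC)$ is a candidate $\WL$-\seg. First I would check the two clauses of Definition~\ref{def:segm}. The first is immediate: if $\Box A \in \Sigma$, then $\U_A \in \CC$ and $A \in \Pi$ for every $\Pi \in \U_A$. For the second, suppose $\diam B \in \Sigma$ and let $\U_A \in \CC$ be arbitrary, so $\Box A \in \Sigma$; the key sub-claim is that $A \land B$ is $\WL$-consistent. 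If it were not, then $\WL \vd \neg(A \land B)$, so the rule \Rdualand\ --- derivable in $\WL$ because every W-logic extends $\WM$, by Proposition~\ref{prop:basic res w}(i) --- would give $\WL \vd \neg(\Box A \land \diam B)$, contradicting $\Box A, \diam B \in \Sigma$ and the consistency of $\Sigma$. Hence, by Lindenbaum (Lemma~\ref{lemma:lind}), there is a $\WL$-prime set $\Pi \supseteq \{A, B\}$, which lies in $\U_A$ and contains $B$, as required.

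Next I would dispatch the three extra requirements, each of which amounts to reading off the relevant modal axiom together with closure of $\WL$-prime sets under derivation. For $\WCstar$, which contains \axCbox: if $\Box A, \Box B \in \Sigma$ then $\Box(A \land B) \in \Sigma$, and since prime sets are closed under derivation $\U_A \cap \U_B = \U_{A \land B} \in \CC$; iterating, $\CC$ is closed under finite intersections. For $\WDstar$, which contains \axD: from $\Box A \in \Sigma$ we get $\diam A \in \Sigma$, so the same use of \Rdualand\ as above --- now applied to $\Box B$ and $\diam A$ --- shows $A \land B$ consistent, i.e.\ $\U_A \cap \U_B \neq \emptyset$. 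For $\WTstar$, which contains \axTbox: from $\Box A \in \Sigma$ we get $A \in \Sigma$, hence $\Sigma \in \U_A$ for every member of $\CC$. Logics lying in several of these families at once ($\WKD$, $\WMCD$, $\WMCT$, $\WKT$) are handled by combining the respective arguments, which do not interfere. I would also record the boundary behaviour: if $\Sigma$ contains no boxed formula then $\CC = \emptyset$ and every clause holds vacuously, whereas if $\WL$ contains \axNbox\ then $\Box\top \in \Sigma$, so $\U_\top \in \CC$ and $\CC \neq \emptyset$, as expected.

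The only delicate point is the second defining clause, and its verification is exactly where the characteristic feature of W-logics enters: every W-logic extends $\WM$ and hence validates the duality rule \Rdualand, which is what forbids $\Sigma$ from containing both a boxed formula $\Box A$ with $A$ inconsistent and a diamond formula, and so prevents an empty cluster $\U_A$ from blocking the witnessing condition. Everything else is routine bookkeeping with prime sets, and I do not anticipate a further obstacle.
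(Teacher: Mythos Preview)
Your proof is correct and takes a genuinely different route from the paper's. The paper splits into two cases depending on whether $\WL$ contains the axioms \axCbox, \axKdiam: in the non-\axCbox\ case it builds, for each $\Box A\in\Sigma$, a small neighbourhood $\Ua$ consisting of one specifically chosen prime witness $\Sab$ for each $\diam B\in\Sigma$ (adjoining $\Sigma$ in the \axTbox\ case); in the \axCbox\ case it instead collapses everything to a single neighbourhood $\U$ built from extensions of $\boxm\Sigma$, so that closure under intersection is trivial. Your construction is more uniform: by taking $\U_A$ to be the \emph{full} truth set $\{\Pi\ \WL\text{-prime}\mid A\in\Pi\}$, you avoid the case distinction entirely and discharge the $\WCstar$ condition via the clean identity $\U_A\cap\U_B=\U_{A\land B}$ together with $\Box(A\land B)\in\Sigma$. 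The paper's finer construction buys nothing for this lemma---in the truth lemma (Lemma~\ref{lemma:truth lemma}) the paper builds fresh segments from scratch anyway---so your approach is a genuine simplification. One minor remark: the phrase ``iterating, $\CC$ is closed under finite intersections'' is unnecessary, since Definition~\ref{def:segm} only asks for binary intersections.
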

\begin{proof}
Given a $\WL$-prime set $\Sigma$, we construct a $\WL$-segment $(\Sigma, \CC)$ as follows.
If there is no $\Box A\in\Sigma$, we put $\CC=\emptyset$.
If there is no $\diam A\in\Sigma$, we put $\CC = \{\emptyset\}$.
Otherwise we distinguish two cases.

\begin{itemize}
\item[(i)]
$\WL$ does not contain \axCbox, \axKdiam.
Let $\Box A, \diam B\in\Sigma$. Then $A,B\not\vd_{\WL}\bot$
(otherwise by \Rdualand, $\Box A, \diam B\vd_{\WL}\bot$, against the consistency of $\Sigma$).
Then by Lemma~\ref{lemma:lind}, there is $\Sab$ $\WL$-prime such that $A,B\in\Sab$.
For all $\Box A\in\Sigma$, 
we define $\Ua = \{\Sab \mid \diam B\in\Sigma\}$
if $\WL$ does not contain \axTbox, and 
$\Ua = \{\Sab \mid \diam B\in\Sigma\} \cup\{\Sigma\}$  if it contains \axTbox. 
Moreover we define $\CC = \{\Ua \mid \Box A \in\Sigma\}$.
Then $(\Sigma, \CC)$ is a $\WL$-segment:
if $\Box A\in\Sigma$, then $\Ua\in\CC$ and for all 
$\Sigma'\in\Ua$, $A\in\Sigma'$.
If $\diam B\in\Sigma$, then for all $\Ua\in\CC$, there is $\Sab\in\Ua$ such that $B\in\Sab$.
Moreover for $\WDstar$, if $\Ua,\Ub\in\CC$, $\Ua\not=\Ub$, 
then $\Box A,\Box B\in\Sigma$,
then by axiom \axD, $\diam A,\diam B\in\Sigma$,
thus $\Sab\in\Ua\cap\Ub$.

\item[(ii)]
$\WL$ contains \axCbox, \axKdiam.
Let $\diam B\in\Sigma$. Then $\boxm\Sigma\cup\{B\}\not\vd_{\WL}\bot$
(otherwise by \Rdualand\ and \axCbox, 
$\Sigma\vd_{\WL}\bot$).
Then by Lemma~\ref{lemma:lind}, there is $\Sb$ $\WL$-prime such that $\boxm\Sigma\subseteq\Sb$ and $B\in\Sb$.
We define $\U = \{\Sb \mid \diam B\in\Sigma\}$
if $\WL$ does not contain \axTbox, and 
$\U = \{\Sb \mid \diam B\in\Sigma\} \cup\{\Sigma\}$ if it contains \axTbox. 
Moreover we define $\CC = \{\U\}$.
It is easy to verify that $(\Sigma, \CC)$ is a $\WCstar$-segment.
\end{itemize}
\end{proof}

We consider the following definition of canonical model.

\begin{definition}\label{def:can model}
For every logic $\Wlogic$, 
the \emph{canonical model} for $\WL$ is the tuple $\Mc = \langle \Wc, \lessc, \Nc, \Vc \rangle$,
where:
\begin{itemize}
\item $\Wc$ is the class of all $\WL$-segments;
\item $(\Sigma, \CC) \lessc (\Sigma', \CC')$ 
if and only if $\Sigma\subseteq\Sigma'$;
\item for every set $\U$ of $\WL$-prime sets, $\aU = \{(\Sigma, \CC) \mid \Sigma\in\U\}$;
\item $\aU\in\Nc((\Sigma,\CC))$ 
if and only if $\U\in\CC$;
\item $(\Sigma,\CC)\in\Vc(p)$ if and only if $p\in\Sigma$.
\end{itemize}
\end{definition}


We prove the following two lemmas which entail completeness of $\WL$-logics.

\begin{lemma}\label{lemma:model lemma}
The canonical model for $\WL$ is a \CNM\ for $\WL$.
\end{lemma}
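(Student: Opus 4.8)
The goal is to show that the canonical model $\Mc = \langle \Wc, \lessc, \Nc, \Vc \rangle$ from Def.~\ref{def:can model} is a genuine \CNM\ for $\WL$, i.e.\ that it satisfies all the clauses of Def.~\ref{def:semantics}, \emph{and} that it verifies each condition (\cX) corresponding to a modal axiom of $\WL$. The verification splits naturally into three parts: (1) the frame conditions on $\lessc$ and $\Vc$ that make $\Mc$ a \CNM\ at all; (2) the extra neighbourhood conditions (\cC), (\cN), (\cT), (\cD), (\cP) for the axioms present in $\WL$; (3) a separate ``truth lemma'' --- but that is Lemma~\ref{lemma:model lemma}'s companion (presumably the next lemma), so here I only need to check that $\Mc$ \emph{is} a model of the right class, not that forcing matches membership.

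\textbf{Part (1): basic structure.} First I would check that $\lessc$ is a preorder: reflexivity and transitivity are immediate since $(\Sigma,\CC)\lessc(\Sigma',\CC')$ iff $\Sigma\subseteq\Sigma'$, and set inclusion is a preorder. Next, $\Vc$ is hereditary: if $(\Sigma,\CC)\in\Vc(p)$ then $p\in\Sigma$, and if $(\Sigma,\CC)\lessc(\Sigma',\CC')$ then $\Sigma\subseteq\Sigma'$, so $p\in\Sigma'$, hence $(\Sigma',\CC')\in\Vc(p)$. Finally $\Wc$ is nonempty: by Lemma~\ref{lemma:lind} there is at least one $\WL$-prime set (take $\Sigma$ a prime extension of $\emptyset$, using $\emptyset\not\vd_\WL\bot$), and by Lemma~\ref{lemma:segm} each prime set extends to a segment, so $\Wc\neq\emptyset$. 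And $\Nc$ has the right type: $\Nc((\Sigma,\CC))$ is defined as $\{\aU \mid \U\in\CC\}\subseteq\pow(\pow(\Wc))$.

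\textbf{Part (2): the axiom conditions.} For each of the conditions, I would translate the segment-level closure conditions of Def.~\ref{def:segm} into the neighbourhood-function conditions, using the key bookkeeping fact that $\aU\cap\aUp = \alpha_{\U\cap\U'}$ (both sides are $\{(\Sigma,\CC)\mid\Sigma\in\U\cap\U'\}$) and that $\aU=\emptyset$ iff $\U=\emptyset$. Concretely: \emph{(\cC)} if $\aU,\aUp\in\Nc((\Sigma,\CC))$ then $\U,\U'\in\CC$, so by the ($\WCstar$) clause $\U\cap\U'\in\CC$, hence $\aU\cap\aUp=\alpha_{\U\cap\U'}\in\Nc((\Sigma,\CC))$. \emph{(\cD)} similarly from the ($\WDstar$) clause: $\U\cap\U'\neq\emptyset$ gives some prime $\Pi\in\U\cap\U'$, which (by Lemma~\ref{lemma:segm}) sits in some segment, so $\aU\cap\aUp\neq\emptyset$. \emph{(\cT)} from the ($\WTstar$) clause $\Sigma\in\U$ for all $\U\in\CC$: then $(\Sigma,\CC)\in\aU$ whenever $\aU\in\Nc((\Sigma,\CC))$. \emph{(\cN)} $\N_\mathsf{c}((\Sigma,\CC))\neq\emptyset$: here I use that for a $\WNstar$-logic, $\WMN\vd\Box\top$ by Prop.~\ref{prop:basic res w}, so $\Box\top\in\Sigma$, so by the first segment clause there is $\U\in\CC$, giving $\aU\in\Nc((\Sigma,\CC))$; alternatively the segment construction in Lemma~\ref{lemma:segm} already produces a nonempty $\CC$ when a $\Box$-formula is present, and one must handle the degenerate cases ($\CC=\{\emptyset\}$ or $\CC=\emptyset$) --- this is where some care is needed. \emph{(\cP)} $\emptyset\notin\Nc((\Sigma,\CC))$: if $\aU=\emptyset$ with $\U\in\CC$, then $\U=\emptyset$; but for a $\WPstar$-logic containing \axPdiam, if $\CC$ is built from a $\diam$-formula... one needs $\emptyset\notin\CC$, which must be read off the segment-construction, again paying attention to the degenerate branches.

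\textbf{The main obstacle.} The routine preorder/hereditariness checks are trivial; the real work --- and the place I expect friction --- is making the degenerate-case conventions in the segment construction (Lemma~\ref{lemma:segm}: ``if there is no $\Box A\in\Sigma$, put $\CC=\emptyset$''; ``if there is no $\diam A\in\Sigma$, put $\CC=\{\emptyset\}$'') consistent with the conditions (\cN) and (\cP) exactly when the corresponding axioms are in $\WL$. For instance, if $\WL$ contains \axNbox\ then $\Box\top\in\Sigma$ always, so the ``no $\Box A$'' branch never fires and $\CC\neq\emptyset$ is guaranteed; if $\WL$ contains \axPdiam\ then... one must check $\emptyset\notin\CC$, which forces checking that the $\diam B\in\Sigma$ branch of the construction never yields an empty set $\U_{\ldots}$ --- i.e.\ that $\Sigma$ always contains \emph{some} $\diam$-formula (true if \axPdiam\ is present, since $\diam\top\in\Sigma$), and that $\U_{\{A\}}$ or $\U$ is nonempty by Lemma~\ref{lemma:lind}. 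So the proof is essentially: dispatch Part (1) in two lines, then go condition-by-condition through Part (2), in each case invoking the matching clause of Def.~\ref{def:segm} together with the identities $\aU\cap\aUp=\alpha_{\U\cap\U'}$ and $\aU=\emptyset\iff\U=\emptyset$, and carefully cross-checking the degenerate branches of the Lemma~\ref{lemma:segm} construction against the axioms assumed.
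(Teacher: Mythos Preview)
Your proposal is correct and follows essentially the same route as the paper: verify the preorder and hereditariness directly from the definition of $\lessc$ and $\Vc$, read off (\cC), (\cD), (\cT) from the corresponding closure clauses in Def.~\ref{def:segm} via the identity $\aU\cap\aUp=\alpha_{\U\cap\U'}$, and handle (\cN) and (\cP) by observing that $\Box\top\in\Sigma$ (resp.\ $\diam\top\in\Sigma$) forces $\CC\neq\emptyset$ (resp.\ $\U\neq\emptyset$ for all $\U\in\CC$) through the two defining clauses of a segment.

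One clarification: your worry about the ``degenerate branches'' of the construction in Lemma~\ref{lemma:segm} is misplaced. The canonical model contains \emph{all} $\WL$-segments in the sense of Def.~\ref{def:segm}, not just the particular ones built in Lemma~\ref{lemma:segm}, so the argument must (and does) go through the abstract segment conditions alone. For (\cN) and (\cP) this is exactly your ``first'' argument: $\Box\top\in\Sigma$ forces $\CC\neq\emptyset$ by the first segment clause, and $\diam\top\in\Sigma$ forces every $\U\in\CC$ to be nonempty by the second; no case analysis on how $\CC$ was constructed is needed. The paper's proof is accordingly very short, dispatching (\cC), (\cD), (\cT) and hereditariness as ``immediate by Defs.~\ref{def:segm} and~\ref{def:can model}'' and spelling out only the two lines for (\cN) and (\cP).
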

\begin{proof}
We show that the canonical model for $\WL$ satisfies the conditions of \CNM s for $\WL$.
(\cC), (\cD), (\cT) and hereditariness of $\Vc$ are immediate by 
Defs.~\ref{def:segm} and \ref{def:can model}.
(\cN)
For all $\WNstar$-prime sets $\Sigma$, $\Box\top\in\Sigma$,
then for all $\WNstar$-segments $(\Sigma,\CC)$, $\CC\not=\emptyset$,
thus $\Nc((\Sigma,\CC))\not=\emptyset$.
(\cP)
For all $\WPstar$-prime sets $\Sigma$, $\diam\top\in\Sigma$,
then for all $\WPstar$-segments $(\Sigma,\CC)$, for all $\U\in\CC$, $\U\not=\emptyset$,
thus $\emptyset\notin\Nc((\Sigma,\CC))$.
%
\end{proof}


\begin{lemma}\label{lemma:truth lemma}
Let $\WL$ be a W-logic, and $\Mc = \langle \Wc, \lessc, \Nc, \Vc \rangle$
be the canonical model for $\WL$. Then for
all $(\Sigma,\CC)\in\Wc$, 
$(\Sigma,\CC)\Vd A$ if and only  if $A\in\Sigma$.
\end{lemma}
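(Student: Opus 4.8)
The plan is to proceed by induction on the construction of the formula $A$, exactly as in the standard Lindenbaum--Henkin argument for intuitionistic logics, with the modal cases handled via the segment machinery of Def.~\ref{def:segm} and Lemma~\ref{lemma:segm}. The propositional base and inductive cases ($A = p$, $\bot$, $B\land C$, $B\lor C$, $B\imp C$) are routine and go through precisely as for $\IPL$: atoms by the definition of $\Vc$; $\land$ and $\lor$ using that segments carry $\WL$-prime sets (so $\lor$ uses the disjunction property of prime sets); and $\imp$ using Lemma~\ref{lemma:lind} (Lindenbaum) together with the definition $(\Sigma,\CC)\lessc(\Sigma',\CC')$ iff $\Sigma\subseteq\Sigma'$ --- for the nontrivial direction, if $B\imp C\notin\Sigma$ then $\boxm{}$-free reasoning gives $\Sigma,B\not\vdash_{\WL} C$, so by Lindenbaum there is a prime $\Sigma'\supseteq\Sigma\cup\{B\}$ with $C\notin\Sigma'$, and Lemma~\ref{lemma:segm} supplies a segment $(\Sigma',\CC')$, which is a $\lessc$-successor witnessing $(\Sigma,\CC)\not\Vd B\imp C$. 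One should note here that $\lessc$ only looks at the first component, so extending a prime set to a segment via Lemma~\ref{lemma:segm} is exactly what makes the $\imp$ (and $\Box$) cases work.

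For the modal cases I would argue as follows. \emph{Case $A = \Box B$.} For the left-to-right direction: if $\Box B\in\Sigma$ and $(\Sigma,\CC)\lessc(\Sigma',\CC')$, then $\Sigma\subseteq\Sigma'$ so $\Box B\in\Sigma'$; by the segment condition there is $\U\in\CC'$ with $B\in\Pi$ for all $\Pi\in\U$, hence (by the i.h.\ on $B$) $(\Pi,\ast)\Vd B$ for every segment with first component in $\U$, i.e. $\aU\,{}^{\forall} B$ in the notation of the model, and $\aU\in\Nc((\Sigma',\CC'))$ since $\U\in\CC'$; as $(\Sigma',\CC')$ was an arbitrary successor, $(\Sigma,\CC)\Vd\Box B$. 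For the converse: if $(\Sigma,\CC)\Vd\Box B$ then in particular (taking $v = w$) there is $\aU\in\Nc((\Sigma,\CC))$ with $\aU\,{}^{\forall} B$; then $\U\in\CC$ and by i.h.\ $B\in\Pi$ for all $\Pi\in\U$. Now one uses the contrapositive of the segment clause --- more precisely, one shows $\Box B\in\Sigma$ by arguing that if $\Box B\notin\Sigma$ then no $\U'\in\CC$ can have $B$ in all its members, which contradicts the witness $\U$; this requires knowing that the segments produced by Lemma~\ref{lemma:segm} are "tight" enough, i.e. that every $\U\in\CC$ arises from a $\Box$-formula in $\Sigma$ (from the construction in that lemma, each $\U_A$ corresponds to some $\Box A\in\Sigma$, and all its members contain $A$ but need not contain $B$ unless $\WL\vdash A\imp B$; but then $\Box A\in\Sigma$ plus $\monbox$ gives $\Box B\in\Sigma$). \emph{Case $A = \diam B$} is dual: $\diam B\in\Sigma$ with any successor $(\Sigma',\CC')$ and any $\aU\in\Nc((\Sigma',\CC'))$ means $\U\in\CC'$ and $\diam B\in\Sigma'$, so the segment clause yields $\Pi\in\U$ with $B\in\Pi$, hence $\aU\,{}^{\exists} B$ by i.h.; conversely, if $\diam B\notin\Sigma$ one constructs, via Lemma~\ref{lemma:segm} applied carefully (or by the explicit construction there), a successor with a neighbourhood all of whose members omit $B$, refuting $(\Sigma,\CC)\Vd\diam B$ --- here one leans on the fact that the $\U$'s in $\CC$ only contain prime sets forced to contain the relevant $A$'s, and on $\axdualand$/$\mondiam$ to derive a contradiction in $\Sigma$ if every neighbourhood were forced to realise $B$.

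The main obstacle I expect is the right-to-left direction of the modal cases, specifically ensuring that the canonical neighbourhood structure is neither too rich nor too poor: we must check that from $(\Sigma,\CC)\Vd\Box B$ we can actually \emph{conclude} $\Box B\in\Sigma$ (not merely that some weaker box-formula is in $\Sigma$), and dually for $\diam$. This is exactly where the precise form of the segments from Lemma~\ref{lemma:segm} matters --- one must use that in the non-$\axCbox$ case each $\U_A$ is indexed by a $\Box A\in\Sigma$ and consists of prime sets containing $A$ (so "$B$ in all members of $\U_A$" forces $\WL\vdash A\imp B$ via Lindenbaum-style saturation, hence $\Box B\in\Sigma$ by $\monbox$), and in the $\axCbox$ case that $\CC = \{\U\}$ with $\U$ built from $\boxm\Sigma$ (so "$B$ in all members of $\U$" gives $\boxm\Sigma\vdash_{\WL} B$, and then $\axCbox$ + $\monbox$ + closure under $\nec$-like reasoning yields $\Box B\in\Sigma$). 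The $\diam$ direction similarly requires, when $\diam B\notin\Sigma$, producing a neighbourhood witnessing failure, which in the $\axCbox$ case uses consistency of $\boxm\Sigma\cup\{\neg B\}$ and $\axKdiam$. Once these saturation facts are isolated as small sublemmas (or read off directly from the construction in Lemma~\ref{lemma:segm}), the induction closes; combined with Lemma~\ref{lemma:model lemma} and Soundness (Theorem~\ref{th:soundness}) it yields completeness of each $\WL$ with respect to its class of \CNM s.
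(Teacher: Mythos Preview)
Your plan for the propositional cases and for the ``easy'' directions of the modal cases ($\Box B\in\Sigma \Rightarrow (\Sigma,\CC)\Vd\Box B$ and $\diam B\in\Sigma \Rightarrow (\Sigma,\CC)\Vd\diam B$) is correct and matches the paper. The gap is in your treatment of $\Box B\notin\Sigma \Rightarrow (\Sigma,\CC)\not\Vd\Box B$.

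You argue by contrapositive: from $(\Sigma,\CC)\Vd\Box B$ pick the witness $\U\in\CC$ at $v=w$, deduce $B\in\Pi$ for all $\Pi\in\U$, and then claim this forces $\Box B\in\Sigma$ because ``every $\U\in\CC$ arises from a $\Box$-formula in $\Sigma$'' and ``$B$ in all members of $\U_A$ forces $\WL\vdash A\imp B$''. Both claims fail. First, by Def.~\ref{def:can model} the canonical model contains \emph{all} $\WL$-segments, not only those produced by Lemma~\ref{lemma:segm}; the segment axioms in Def.~\ref{def:segm} do not prevent $\CC$ from containing a $\U$ whose members all happen to contain $B$ even though $\Box B\notin\Sigma$. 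Second, even for the segments built in Lemma~\ref{lemma:segm}, $\U_A$ consists of \emph{particular chosen} prime sets $\Sigma_{AB'}$ (one per $\diam B'\in\Sigma$), not all prime sets containing $A$; so $B$ lying in each of these chosen sets says nothing about $\WL\vdash A\imp B$.

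The paper's fix --- and the idea you are missing --- is not to reason about the given $\CC$ at all. Since $\lessc$ depends only on the first component, $(\Sigma,\CC)\lessc(\Sigma,\CC')$ for \emph{any} segment $(\Sigma,\CC')$ on the same $\Sigma$. So when $\Box B\notin\Sigma$ one builds a bespoke $\CC'$ in which \emph{every} neighbourhood contains a prime set omitting $B$ (using $D\not\vdash B$ for each $\Box D\in\Sigma$ in the non-$\axCbox$ case, or $\boxm\Sigma\not\vdash B$ in the $\axCbox$ case, plus Lindenbaum), checks that $(\Sigma,\CC')$ is still a segment, and concludes $(\Sigma,\CC')\not\Vd\Box B$, hence $(\Sigma,\CC)\not\Vd\Box B$. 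Your sketch for the $\diam$ case (``construct a successor with a neighbourhood all of whose members omit $B$'') is in fact this idea; you need to apply the same move to the $\Box$ case rather than trying to extract $\Box B\in\Sigma$ from an arbitrary $\CC$.
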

\begin{proof}
By induction on the construction of $A$.
If $A = p$ or $A = \bot$, the proof is immediate by definition of $\Vc$ or by consistency of $\Sigma$,
moreover for $A = B\land C, B\lor C$ the proof is immediate by applying the inductive hypothesis.
We consider the 
remaining 
cases.

\begin{list}{}{\leftmargin=0em \itemindent=0em}
\item
$\bullet$ $A = B \imp C$: 
If $B\imp C\in\Sigma$, then suppose $(\Sigma, \CC)\lessc(\Sigma',\CC')$ and $(\Sigma',\CC')\Vd B$.
Then $\Sigma\subseteq\Sigma'$, thus $B\imp C\in\Sigma'$.
Moreover by \ih, $B\in\Sigma'$, then $C\in\Sigma'$, thus by \ih, $(\Sigma',\CC')\Vd C$.
Therefore $(\Sigma, \CC)\Vd B \imp C$.
If instead $B\imp C\notin\Sigma$, then $\Sigma\not\vd B\imp C$, thus $\Sigma\cup\{B\}\not\vd C$.
By Lemma~\ref{lemma:lind}, there is $\Sigma'$ 
$\WL$-prime such that $\Sigma\cup\{B\}\subseteq\Sigma'$ and $C\notin\Sigma'$.
Then by Lemma~\ref{lemma:segm} and Def.~\ref{def:can model}, there is a $\WL$-segment $(\Sigma',\CC')\in\Wc$.
By definition, $(\Sigma, \CC)\lessc(\Sigma', \CC')$, and by \ih, $(\Sigma', \CC')\Vd B$ and $(\Sigma', \CC')\not\Vd C$.
Therefore $(\Sigma, \CC)\not\Vd B\imp C$.




\item
$\bullet$ $A = \Box B$:
If $\Box B\in\Sigma$, then for all $(\Sigma',\CC')\morec(\Sigma,\CC)$, $\Box B\in\Sigma'$.
By definition of segment,
 there is $\U'\in\CC'$ such that for all $\Sigma''\in\U'$, $B\in\Sigma''$.
Then $\aUp\in\N((\Sigma',\CC'))$, moreover by \ih, $(\Sigma'',\CC'')\Vd B$ for all $(\Sigma'',\CC'')\in\aUp$.
Therefore $(\Sigma, \CC)\Vd \Box B$. 
Now suppose that $\Box B\notin\Sigma$.
If there is no $\Box C\in\Sigma$, then $(\Sigma,\emptyset)$ is a $\WL$-segment, 
moreover $(\Sigma,\CC)\lessc(\Sigma,\emptyset)$ and $\Nc((\Sigma,\emptyset)) = \emptyset$,
thus $(\Sigma,\CC)\not\Vd\Box B$.
If instead there is $\Box C\in\Sigma$,
we distinguish two cases:
\end{list}

\begin{itemize}
\item[(i)] $\WL$ does not contain \axCbox, \axKdiam.
Then for all $\Box D\in\Sigma$, $D\not\vd B$
(otherwise by \monbox, $\Box D \vd \Box B$, whence $\Box B\in\Sigma$).
Then there is $\Sd$ $\WL$-prime such that $D\in\Sd$ and $B\notin\Sd$.
Moreover, for all $\diam C\in\Sigma$, $C,D\not\vd\bot$
(otherwise by \Rdualand, $\diam C, \Box D\vd \bot$, whence $\bot\in\Sigma$).
Then there is $\Scd$ $\WL$-prime such that $C,D\in\Scd$.
For all $\Box D\in\Sigma$, we define 
$\Ud = \{\Sd\} \cup \{\Scd \mid \diam C\in\Sigma\}$
if $\WL$ does not contain \axTbox,
and $\Ud = \{\Sd\} \cup \{\Scd \mid \diam C\in\Sigma\} \cup \{\Sigma\}$  if it contains \axTbox. 
Moreover, we define
$\CC' = \{\Ud \mid \Box D\in\Sigma\}$.
It is easy to verify that $(\Sigma, \CC')$ is a $\WL$-segment.
Moreover, for all $\Ud\in\CC'$, $\Sd\in\Ud$ and $B\notin\Sd$,
thus by \ih, $(\Sd, \CC'')\not\Vd B$ for any $(\Sd, \CC'')\in\Wc$.
It follows that for all $\aU\in\Nc((\Sigma, \CC'))$, $\aU\not\ufor B$.
Thus $(\Sigma, \CC')\not\Vd\Box B$, and since $(\Sigma, \CC)\lessc (\Sigma, \CC')$,
$(\Sigma, \CC)\not\Vd\Box B$.

\item[(ii)] 
$\WL$ contains \axCbox, \axKdiam.
Then $\boxm\Sigma\not\vd B$
(otherwise by 
\monbox\ and \axCbox,
$\Sigma\vd\Box B$),
then there is $\Sigma'$ $\WCstar$-prime such that $\boxm\Sigma\subseteq\Sigma'$ and $B\notin\Sigma'$.
Moreover, for all $\diam C\in\Sigma$, $\boxm\Sigma\cup\{C\}\not\vd\bot$, 
then there is $\Sc$ $\WCstar$-prime such that $\boxm\Sigma\subseteq\Sc$ and $C\in\Sc$.
We define 
$\U' = \{\Sigma'\} \cup \{\Sc \mid \diam C\in\Sigma\}$
if $\WCstar$ does not contain \axTbox,
and $\U' = \{\Sigma'\} \cup \{\Sc \mid \diam C\in\Sigma\} \cup \{\Sigma\}$  if it contains \axTbox. 
Moreover, we define
$\CC' = \{\U'\}$.
It is easy to verify that $(\Sigma, \CC')$ is a $\WCstar$-segment.
Moreover, since $B\notin\Sigma'$,
by \ih, $(\Sigma', \CC'')\not\Vd B$ for any $(\Sigma', \CC'')\in\Wc$,
it follows that for all $\aU\in\Nc((\Sigma, \CC'))$, $\aU\not\ufor B$.
Thus $(\Sigma, \CC')\not\Vd\Box B$, and since $(\Sigma, \CC)\lessc (\Sigma, \CC')$,
$(\Sigma, \CC)\not\Vd\Box B$.
\end{itemize}

\begin{list}{}{\leftmargin=0em \itemindent=0em}
\item
$\bullet$ $A = \diam B$:
If $\diam B\in\Sigma$, then for all $(\Sigma',\CC')\morec(\Sigma,\CC)$, $\diam B\in\Sigma'$.
By definition of segment, for all $\U'\in\CC'$, there is $\Sigma''\in\U'$ such that $B\in\Sigma''$.
Then for all $\aUp\in\Nc((\Sigma',\CC'))$, there is $(\Sigma'',\CC'')\in\aUp$ such that $B\in\Sigma''$,
thus by \ih, $(\Sigma'',\CC'')\Vd B$.
It follows that $(\Sigma,\CC)\Vd\diam B$.
Now suppose that $\diam B\notin\Sigma$.
If there is no $\diam C\in\Sigma$, then $(\Sigma,\{\emptyset\})$ is a $\WL$-segment, 
moreover $(\Sigma,\CC)\lessc(\Sigma,\{\emptyset\})$ and $\Nc((\Sigma,\{\emptyset\})) = \{\emptyset\}$,
thus $(\Sigma,\CC)\not\Vd\diam B$.
If instead there is $\diam C\in\Sigma$,
we distinguish two cases:
\end{list}

\begin{itemize}
\item[(i)] 
$\WL$ does not contain \axCbox, \axKdiam.
Then for all $\diam C\in\Sigma$, $C\not\vd B$ 
(otherwise by \mondiam, $\diam C \vd \diam B$, whence $\diam B\in\Sigma$).
Then there is $\Sc$ $\WL$-prime such that $C\in\Sc$ and $B\notin\Sc$.
Moreover, for all $\Box D\in\Sigma$, $C,D\not\vd\bot$
(otherwise by \Rdualand, $\diam C, \Box D\vd \bot$, whence $\bot\in\Sigma$).
Then there is $\Scd$ $\WL$-prime such that $C,D\in\Scd$.
If in addition $\WL$ does not contain \axTbox, we define 
$\U' = \{\Sc \mid \diam C\in\Sigma\}$, and
for all $\Box D\in\Sigma$, $\Ud = \{\Scd \mid \diam C\in\Sigma\}$;
otherwise we define 
$\U' = \{\Sc \mid \diam C\in\Sigma\} \cup\{\Sigma\}$, and
for all $\Box D\in\Sigma$, $\Ud = \{\Scd \mid \diam C\in\Sigma\} \cup\{\Sigma\}$.
Moreover, we define
$\CC' = \{\U'\} \cup \{\Ud \mid \Box D\in\Sigma\}$.
It is easy to verify that $(\Sigma, \CC')$ is a $\WL$-segment:
for instance for $\WDstar$, 
if $\Ud,\Ue\in\CC'$, then $\Ud\cap\Ue\not=\emptyset$
(cf.~proof of Lemma~\ref{lemma:segm}), moreover 
by \axPdiam, $\diam\top\in\Sigma$, thus
for every $\Box D\in\Sigma$, $\Stopd\in\Ud\cap\U'$, then
$\Ud\cap\U'\not=\emptyset$.
In addition, by definition we have $\aUp\in\Nc((\Sigma,\CC'))$, and for all $\Sigma'\in\U'$, $B\notin\Sigma'$
(in particular, by \axTdiam, $B\notin\Sigma$).
Thus by \ih, for all $\Sigma'\in\U'$ and all $(\Sigma',\CC'')\in\Wc$, $(\Sigma',\CC'')\not\Vd B$,
then $\aUp\not\efor B$. Therefore $(\Sigma, \CC')\not\Vd\diam B$, and since $(\Sigma, \CC)\lessc (\Sigma, \CC')$,
$(\Sigma, \CC)\not\Vd\diam B$.

\item[(ii)] 
$\WL$ contains \axCbox, \axKdiam.
Then for all $\diam C\in\Sigma$, $\boxm\Sigma\cup\{C\}\not\vd B$ 
(otherwise by 
\monbox, \axCbox\ and \axKdiam,
$\Sigma\vd\diam B$).
Then there is $\Sc$ $\WCstar$-prime such that $\boxm\Sigma\cup\{C\}\subseteq\Sc$ and $B\notin\Sc$.
We define 
$\U' = \{\Sc \mid \diam C\in\Sigma\}$ if $\WCstar$ does not contain \axTbox, and
$\U' = \{\Sc \mid \diam C\in\Sigma\} \cup\{\Sigma\}$  if it contains \axTbox. 
Moreover we define $\CC' = \{\U'\}$.
It is easy to verify that $(\Sigma, \CC')$ is a $\WCstar$-segment.
Moreover, for all $\Sigma'\in\U'$, $B\notin\Sigma'$, then by \ih, for all $(\Sigma',\CC'')\in\Wc$, 
$(\Sigma',\CC'')\not\Vd B$,
thus $\aUp\not\efor B$. It follows that $(\Sigma, \CC')\not\Vd\diam B$, and since $(\Sigma, \CC)\lessc (\Sigma, \CC')$,
$(\Sigma, \CC)\not\Vd\diam B$.
\end{itemize}
\end{proof}

\begin{theorem}[Completeness]
For every W-logic $\Wlogic$, 
if $\M\models A$ for all $\Wlogic$-models $\M$,
then $\Wlogic\vd A$.
\end{theorem}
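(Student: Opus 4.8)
The plan is to establish the contrapositive by the standard canonical model argument, simply assembling the lemmas already proved. So suppose $\Wlogic\not\vd A$. Unfolding the definition of deducibility, this is exactly $\emptyset\not\vd_{\WL} A$, so Lemma~\ref{lemma:lind} (Lindenbaum) applies and yields a $\WL$-prime set $\Sigma$ with $\emptyset\subseteq\Sigma$ and $A\notin\Sigma$; in particular $\Sigma$ contains all theorems of $\Wlogic$ and is consistent.

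Next I would move from $\Sigma$ into the canonical model. By Lemma~\ref{lemma:segm} there is a $\WL$-segment $(\Sigma,\CC)$, hence $(\Sigma,\CC)\in\Wc$ by Definition~\ref{def:can model}. By Lemma~\ref{lemma:model lemma} the canonical model $\Mc = \langle \Wc,\lessc,\Nc,\Vc\rangle$ is a \CNM\ for $\Wlogic$, i.e.\ it satisfies every frame condition $(\cX)$ corresponding to a modal axiom of $\Wlogic$, so $\Mc$ is a $\Wlogic$-model. Finally, by Lemma~\ref{lemma:truth lemma} (truth lemma) we have $(\Sigma,\CC)\Vd A$ iff $A\in\Sigma$; since $A\notin\Sigma$, this gives $(\Sigma,\CC)\not\Vd A$, and therefore $\Mc\not\models A$. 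Contrapositively, if $\M\models A$ for all $\Wlogic$-models $\M$, then $\Wlogic\vd A$.

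I do not expect any real obstacle in the theorem itself: all the substance has already been discharged into the preceding lemmas — the segment construction of Lemma~\ref{lemma:segm} and, above all, the modal cases of the truth lemma, where for a segment refuting $\Box B$ (resp.\ $\diam B$) one must exhibit a neighbourhood collection $\CC'$ witnessing the failure while still respecting the $\WCstar$, $\WDstar$, and $\WTstar$ closure conditions. The only point deserving a word of care in the present proof is that $\emptyset\not\vd_{\WL} A$, which is precisely the hypothesis $\Wlogic\not\vd A$, so that Lindenbaum's lemma is indeed applicable.
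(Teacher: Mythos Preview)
Your proof is correct and follows essentially the same route as the paper: assume $\Wlogic\not\vd A$, apply Lindenbaum to obtain a $\WL$-prime $\Sigma$ with $A\notin\Sigma$, extend it to a segment via Lemma~\ref{lemma:segm}, and conclude by the truth lemma and the model lemma that the canonical model is a $\Wlogic$-model refuting $A$. The only difference is expository---you spell out that $\Wlogic\not\vd A$ means $\emptyset\not\vd_{\WL} A$ and add some commentary on where the real work lies---but the argument is the same.
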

\begin{proof}
Suppose that
$\Wlogic\not\vd A$.
Then by Lemma~\ref{lemma:lind}, there is a $\WL$-prime set $\Sigma$ such that $A\notin\Sigma$,
thus by Lemma~\ref{lemma:segm}, 
there exists a $\WL$-segment $(\Sigma,\CC)$. 
By Def.~\ref{def:can model}, $(\Sigma,\CC)$ belongs to the canonical model $\Mc$ for $\WL$.
Then by Lemma~\ref{lemma:truth lemma}, $(\Sigma,\CC)\not\Vd A$,
and by Lemma~\ref{lemma:model lemma}, $\Mc$ is a $\WL$-model.
Therefore $A$ is not valid in all $\Wlogic$-models.
\end{proof}

\section{Discussion and future work}

In this paper, we have defined a family of 14 \const\ modal logics
both proof-theoretically and semantically motivated,
corresponding each to a different classical modal logic.
On the one hand, the logics correspond to the single-succedent restriction
of standard sequent calculi for classical modal logics.
On the other hand, the same logics are obtained 
by considering over intuitionistic Kripke models a 
natural  generalisation of 
the classical 
satisfaction clauses for modal formulas in the neighbourhood semantics.
The main result of this paper is that,
despite being mutually independent,
for the considered logics
the two approaches 
return exactly the same systems.


In addition, we have provided some preliminary 
analysis of W-logics.
First, we have shown how W-logics
are related to
the corresponding classical modal logics from the point of view of the axiomatic systems:
each classical modal logic 
considered in this paper can be obtained by extending the corresponding W-logic
 with both excluded middle $A \lor \neg A$ and disjunctive duality $\Box A \lor \diam\neg A$.
Moreover, basing on their sequent calculi we have proved some fundamental properties of W-logics, such as the disjunction property, decidability and Craig interpolation.

Simpson~\cite[Ch.~3]{Simpson:1994} listed 
some 
requirements that one 
expects to be satisfied by 
any \intu\ modal logic:
they must be conservative over $\IPL$;
they must contain all axioms of $\IPL$ (over the whole language) and 
be closed under modus ponens;
they must satisfy the disjunction property;
the modalities must be independent;
the addition of the axiom $A\lor\neg A$ must yield a standard classical modal logic.
Basing on the results presented in this paper,
it is easy to 
verify that 
all W-logics satisfy the first four requirements, 
by contrast 
they do not satisfy the last one.
\footnote{This requirement has been sometimes criticised as being too strong,
see 
\cite{Litak} for an argument against this requirement 
based on negative translations.}
However, 
it comes natural to ask whether there could be some modal principle,
additional to excluded middle,
that distinguishes between \const\ 
and classical modalities.
As a matter of fact, 
it is easy to identify such a principle for W-logics:
as we observed above this principle 
is precisely
$\Box A \lor \diam\neg A$.


This  relation between classical and W-logics is not entirely trivial.
For instance, the same relation 
does not hold between $\CK$ and $\K$, in particular $\CK$
must be
extended also with $\neg(\Box A \land \diam \neg A)$ (or equivalently with $\neg\diam\bot$) 
in order to obtain classical $\K$.
Moreover,
we believe that failure of  $\Box A \lor \diam\neg A$ 
is justifiable
from a \const\ perspective,
as it can be seen as a modalised form of excluded middle.

Concerning the semantics of W-logics, the choice of considering neighbourhood models is motivated by the possibility to uniformly cover all considered logics,
which include both normal and non-normal systems.
However, $\WK$, $\WKD$ and $\WKT$ have
an equivalent characterisation in terms of \const\ bi-relational models \cite{wij},
and we conjecture that
an analogous characterisation can be given 
for $\WMC$ and its extensions in terms of 
relational models with non-normal worlds 
(cf.~e.g.~\cite{Priest:2008}).
As a byproduct of this work,
we have provided a new semantics for $\WK$ 
alternative to 
its original relational semantics \cite{wij}
and to the neighbourhood semantics in \cite{Kojima:2012,dal:JPL}.

The possibility to 
define constructive counterparts of both normal and non-normal classical 
logics 
can be seen as providing
additional justification for
the present approach.
To make a comparison, it is not obvious how to extend the family
of \intu\ modal logics 
($\IK$ and extensions) to non-normal systems, 
given that their definition
ultimately  relies
on the standard translation of modal formulas into first-order sentences, 
which in turn is based on the relational semantics.
Interestingly, 
the \const\ counterparts of non-normal logics that we have obtained are not entirely new.
In particular, 
$\WM$ and $\WMN$ coincide with the logics 
$\logicnamestyle{IM}$ and $\logicnamestyle{IMN_\Box}$ introduced in \cite{dal:JPL},
where they are given an alternative 
semantics
with distinct \neigh\ functions for $\Box$ and $\diam$.
By contrast, $\WMC$ is not equivalent to $\logicnamestyle{IMC}$ in
 \cite{dal:JPL},
since 
$\WMC$ contains \axKdiam\ which is not a theorem of $\logicnamestyle{IMC}$. 


The results presented in this paper can be extended in several directions.
In future work we plan to study the complexity of W-logics, possibly extending some optimal calculi for $\IPL$
(G3-style calculi are not adequate to establish good complexity bounds
for \const\ logics).
Moreover, we would like to study whether Iemhoff's proof-theoretical method for proving 
uniform interpolation \cite{Iemhoff:2019} can be adapted to W-logics.
We would also like to define  calculi for W-logics 
that allow for a direct 
extraction of countermodels from failed proofs,
along the lines of \cite{iemhoff2020g4i,dal:tableaux,dal:JLC}.

%

Furthermore,
one can extend the
present analysis to further classical modal logics in order to enrich the family of W-logics,
but also to inspect 
the limits of 
our approach.
An obvious 
limit concerns the logics for which no standard cut-free Gentzen calculi 
exist,
such as $\Sfive$.
For these logics 
one can study whether 
a similar analysis could be based on
 alternative kinds of calculi, 
like hyper- or nested sequent calculi.
At the same time, it is known that 
incorporating hereditariness into the satisfaction clauses 
is not sufficient to provide a semantics for some 
constructive systems, 
this is the case for instance of the logics
with axiom \axfour~\cite{Alechina}.
Concerning instead 
weaker systems,
it seems that for non-normal logic $\E$ \cite{Chellas:1980}
this approach returns a very weak form of duality analogous to the one of 
$\logicnamestyle{IE_1}$ in \cite{dal:JPL},
but this requires further study.

\vspace{0.4cm}

\noindent \textbf{Acknowledgements.}
This work was supported by the SPGAS and CompRAS projects at the Free University of Bozen-Bolzano, and by the EU H2020 project INODE (grant agreement No 863410).

\bibliographystyle{aiml22}

\end{document}